\g@addto@macro{\endabstract}{\@setabstract}
	\newcommand{\authorfootnotes}{\renewcommand\thefootnote{\@fnsymbol\c@footnote}}%
\def\amsbb{\use@mathgroup \M@U \symAMSb}
\newcommand{\supp}{\textrm{supp}\,}
\newcommand{\bga}{\begin{aligned}}
\newcommand{\ena}{\end{aligned}}
\newcommand{\bge}{\begin{enumerate}}
\newcommand{\ene}{\end{enumerate}}
\newcommand{\red}[1]{{\color{red} #1}}
\newcommand{\blue}[1]{{\color{blue} #1}}
\newcommand{\magenta}[1]{{\color{magenta} #1}}
\newcommand{\hide}[1]{}
\pgfplotsset{compat=1.15}
\definecolor{webgreen}{rgb}{0,.5,0}
\definecolor{webbrown}{rgb}{.6,0,0}
\definecolor{RoyalBlue}{cmyk}{1, 0.50, 0, 0}
\DeclareSymbolFont{bbold}{U}{bbold}{m}{n}
\DeclareSymbolFontAlphabet{\mathbbold}{bbold}
\newcommand{\R}{{\mathbb R}}
\newcommand{\C}{{\mathbb C}}
\newcommand{\Z}{{\mathbb Z}}
\newcommand{\N}{{\mathbb N}}
\newcommand{\al}{\alpha}
\newcommand{\ga}{\gamma}
\newcommand{\Ga}{\Gamma}
\newcommand{\ep}{\varepsilon}
\newcommand{\om}{\omega}
\newcommand{\Om}{\Omega}
\newcommand{\di}{\displaystyle}
\newcommand{\ovl}{\overline}
\newcommand{\ii}{\textrm{i}}
\newcommand{\dd}{\textrm{d}}
\newcommand{\qandq}{\quad \text{and} \quad}
	\newtheorem{theorem}{Theorem}
	\newtheorem{definition}[theorem]{Definition}
	\newtheorem{remark}[theorem]{Remark}
	\newtheorem{lemma}[theorem]{Lemma}
\DeclareRobustCommand\widecheck[1]{{\mathpalette\@widecheck{#1}}}
\def\@widecheck#1#2{%
	\setbox\z@\hbox{\m@th$#1#2$}%
	\setbox\tw@\hbox{\m@th$#1%
		\widehat{%
			\vrule\@width\z@\@height\ht\z@
			\vrule\@height\z@\@width\wd\z@}$}%
	\dp\tw@-\ht\z@
	\@tempdima\ht\z@ \advance\@tempdima2\ht\tw@ \divide\@tempdima\thr@@
	\setbox\tw@\hbox{%
		\raise\@tempdima\hbox{\scalebox{1}[-1]{\lower\@tempdima\box
				\tw@}}}%
	{\ooalign{\box\tw@ \cr \box\z@}}}
\newcommand{\fbseries}{\unskip\setBold\aftergroup\unsetBold\aftergroup\ignorespaces}
\newcommand{\setBoldness}[1]{\def\fake@bold{#1}}
\begin{document}

\tikzset{middlearrow/.style={
			decoration={markings,
				mark= at position 0.6 with {\arrow{#1}} ,
			},
			postaction={decorate}
		}
	}\tikzset{middlearrow/.style={
			decoration={markings,
				mark= at position 0.6 with {\arrow{#1}} ,
			},
			postaction={decorate}
		}
	}

	\tikzset{->-/.style={decoration={
				markings,
				mark=at position #1 with {\arrow{latex}}},postaction={decorate}}}
	
	\tikzset{-<-/.style={decoration={
				markings,
				mark=at position #1 with {\arrowreversed{latex}}},postaction={decorate}}}

\title{ OPENNESS OF REGULAR REGIMES OF COMPLEX RANDOM MATRIX MODELS}

\maketitle

\begin{center}
    \authorfootnotes
  Marco Bertola\footnote{Department of Mathematics and Statistics, Concordia University
1455 de Maisonneuve W., Montr{\' e}al, Qu{\' e}bec, Canada H3G 1M8,  E-mail: Marco.Bertola@concordia.ca,

Centre de recherches math{\' e}matiques, Universit{\' e} de Montr{\' e}al, C. P. 6128, succ. centre ville, Montr{\' e}al, Qu{\' e}bec, Canada H3C 3J7. Email: bertola@crm.umontreal.ca

 SISSA, International School for Advanced Studies, via Bonomea 265, Trieste, Italy.  E-mail: Marco.Bertola@sissa.it,
},
  Pavel Bleher\footnote{Department of Mathematical Sciences, Indiana University-Purdue University Indianapolis, 402 N. Blackford St., Indianapolis, IN 46202, Blackford St., Indianapolis, IN 46202, USA. e-mail: pbleher@iupui.edu},
  Roozbeh Gharakhloo\footnote{Department of Mathematics, Colorado State University, Fort Collins, CO 80521, USA, E-mail: roozbeh.gharakhloo@colostate.edu}, Kenneth T-R  McLaughlin\footnote{Department of Mathematics, Colorado State University, Fort Collins, CO 80521, USA, E-mail: kenmcl@rams.colostate.edu},
  Alexander Tovbis\footnote{Department of Mathematics, University of Central Florida, 4000 Central Florida Blvd., Orlando, FL 32816-1364, USA E-mail: alexander.tovbis@ucf.edu}
  \par \bigskip
\end{center}

\begin{center}
\footnotesize	\textit{Dedicated to the memory of Harold Widom}
\end{center}

\begin{abstract}
 Consider the general complex polynomial external field
\begin{equation*}
V(z)=\frac{z^{k}}{k}+\sum_{j=1}^{k-1} \frac{t_j z^j}{j}, \qquad t_j \in \C, \quad k \in \N. 
\end{equation*}
Fix an equivalence class $\mathcal{T}$ of admissible contours whose members approach $\infty$ in two different directions and consider the associated max-min energy problem \cite{KS}. When $k=2p$, $p \in \N$, and $\mathcal{T}$ contains the real axis, we show that the set of parameters $t_1, \cdots, t_{2p-1}$ which gives rise to a regular $q$-cut max-min (equilibrium) measure, $1 \leq q \leq 2p-1 $,  is an open set in $\C^{2p-1}$. We use the implicit function theorem to prove that the endpoint equations are solvable in a small enough neighborhood of a regular $q$-cut point. We also establish the real-analyticity of the real and imaginary parts of the end-points for all $q$-cut regimes, $1 \leq q \leq 2p-1$, with respect to the real and imaginary parts of the complex parameters in the external field. Our choice of even $k$ and the equivalence class $\mathcal{T} \ni \R$ of admissible contours is only for the simplicity of exposition and our proof extends to all possible choices in an analogous way.

\end{abstract}

\tableofcontents

\section{Introduction and Main Results}

The present paper is part of an ongoing project whose main objective is the investigation of the phase diagram and phases of the unitary ensemble of random matrices 
with a general complex potential
\begin{equation}\label{int1}
V(z;\boldsymbol{t})=\frac{z^{2p}}{2p}+\sum_{j=1}^{2p-1} \frac{t_j z^j}{j}, \qquad t_j \in \C, \quad p \in \N, 
\end{equation}
in the complex space  of the vector of the  parameters \[
\boldsymbol{t}=(t_1, \cdots, t_{2p-1})\in\C^{2p-1}.
\]
The unitary ensemble under consideration is defined as the complex measure on the space  of $n \times n$ Hermitian random matrices,
\begin{equation} \label{int2}
 \frac{1}{\Tilde{\mathcal{Z}}_{n}} e^{-n \mathrm{Tr}\,V(M;\boldsymbol{t})} \dd M,
\end{equation}
where 
\begin{equation} \label{int3}
\Tilde{\mathcal{Z}}_{n}(\boldsymbol{t}) = \int_{\mathcal{H}_n} e^{-n \mathrm{Tr}\,V(M;\boldsymbol{t})} \dd M
\end{equation}
is the {\it partition function}. As well known (see, e.g., \cite{BL}), the ensemble of eigenvalues of $M$,
\[
Me_k=z_ke_k, \qquad \;k=1,\ldots,n,
\]
is given by the probability distribution
\begin{equation}\label{int4}
\frac{1}{\mathcal{Z}_{n}(\boldsymbol{t})}
\prod_{1\leq j<k\leq n} (z_j-z_k)^2\prod^{n}_{j=1} \exp\left[-nV(z_j;\boldsymbol{t})\right] \dd z_1 \cdots \dd z_n,
\end{equation}
where 
\begin{equation}\label{int5}
\mathcal{Z}_{n}(\boldsymbol{t}) = \int^{\infty}_{-\infty} \cdots \int^{\infty}_{-\infty} \prod_{1\leq j<k\leq n} (z_j-z_k)^2\prod^{n}_{j=1} \exp\left[-nV(z_j;\boldsymbol{t})\right] \dd z_1 \cdots \dd z_n,
\end{equation}
is the {\it eigenvalue partition function}. The partition functions 
$\mathcal {Z}_{n}$ and 
$\Tilde{\mathcal{Z}}_{n}$ are related by the formula,
\begin{equation}\label{int6}
\frac{\mathcal {Z}_{n}(\boldsymbol{t})}{\Tilde{\mathcal{Z}}_{n}(\boldsymbol{t})} = 
\frac{1}{\pi^{n(n-1)/2}}\prod_{k=1}^nk!.
\end{equation}
Formulae \eqref{int5}, \eqref{int6} are well known for real polynomial potentials $V(z)$ of even degree
(see, e.g., \cite{BL}), and their proof for a complex $V(z)$  goes through without any change.

By Heine's formula (see e.g. \cite{SzegoOP}) the multiple integral in \eqref{int5} is, up to a multiplicative constant, the determinant of the Hankel matrix $H_n[w] := \{ w_{j+k} \}_{k,j=0, \ldots, n}$, where $w(x;\boldsymbol{t})\equiv\exp[-nV(x;\boldsymbol{t})]$ and $w_\ell$ is the $\ell$-th moment of the weight $w(x;\boldsymbol{t})$. Correspondingly, one can also consider the system of monic orthogonal polynomials $\{P_n(z;\boldsymbol{t})\}_{n \in \Z_{\geq 0}}$ satisfying
\begin{eqnarray}\label{OPs}
\int_{\Gamma} P_{n}(z;\boldsymbol{t}) z^{k} w(z;\boldsymbol{t}) dz = 0 \ , \quad \mbox{for } \quad k = 0, 1, \ldots, n-1.
\end{eqnarray} 
The connection of this system of orthogonal polynomials and the partition function \eqref{int5} can be seen as follows: the orthogonal polynomial of degree $n$ exists and is unique if the partition function $\mathcal{Z}_{n}(\boldsymbol{t})$, or the $n \times n$ Hankel determinant $\det H_n[w]$, is nonzero. Indeed, the existence follows from the explicit formula
\begin{equation}
P_n(z;\boldsymbol{t}) \equiv P_n(z) = \frac{1}{\det H_n[w]} \det \begin{pmatrix}
w_0 & w_1 & \cdots & w_{n-1} & w_{n} \\
w_1 & w_{2} & \cdots  & w_{n} & w_{n+1} \\
\vdots & \vdots & \reflectbox{$\ddots$} & \vdots & \vdots \\
w_{n-1} & w_{n} & \cdots  & w_{2n-2} & w_{2n-1} \\
1 & z & \cdots & z^{n-1} & z^n
\end{pmatrix},
\end{equation}
and the uniqueness follows from the fact that the linear system to find the coefficients of $P_n(z;\boldsymbol{t}) \equiv z^n + \sum_{j=0}^{n-1} a_j(\boldsymbol{t}) z^j$, is of the form $H_n[w] \boldsymbol{a} = \boldsymbol{b}$, and thus can be inverted if the Hankel determinant is nonzero.

  It is well known that the normalized counting measure for the zeros of these orthogonal polynomials weakly converges to the associated \textit{equilibrium measure} $\nu_{\rm eq}$ (See e.g. \cite{ZeroDistribution} and references therein). For a review of the definitions and properties of the equilibrium measure in the cases where the external field is real and complex see \S \ref{EqMeasRealCase} and \S \ref{EqMeasCplxCase} .

The properties of the equilibrium measure when the external field is real have been studied extensively over the last two decades or so (see e.g. \cite{DKM,KuijMcL,SaffTotik} and references therein), and we briefly review these properties in \S \ref{EqMeasRealCase}. In the real case, the contour of orthogonality for the orthogonal polynomials with respect to $e^{-nV(x;\boldsymbol{t})}$, $\boldsymbol{t}\in \R^{2p-1}$, is the real line and the equilibrium measure is supported on finitely many closed real intervals.  One does not need to deal with the problem of \textit{choosing} the contour of integration for orthogonal polynomials in the case where the external field is real, as the solution of the associated extremal problem for the equilibrium measure automatically ensures that the real line is the correct contour of integration. 

In this work we are considering polynomials defined by a "complex orthogonality condition", of the form \eqref{OPs}.  It is easy to see that the polynomials, when they are uniquely determined by the above orthogonality condition, are independent of the choice of contour, within some equivalence class of contours.  Moreover, for a given weight function $w(z;\boldsymbol{t}) \equiv e^{- n V(z;\boldsymbol{t})}$, there are multiple possible choices of equivalence classes of contours (see for instance \cite{BertolaTovbis2015,BertolaTovbis2016,KS}), and each equivalence class yields a different sequence of orthogonal polynomials. 

Even though for each choice of the equivalence class of contours, our method would work, for the sake of simplicity of exposition, we will restrict ourselves as follows:  
We will assume that the external field is a polynomial of degree 2p (see \eqref{int1}), and we will choose the class of contours of integration that are all in the same equivalence class as the real axis.  

As opposed to the case of a real measure on the real axis defining more classical polynomials all of whose zeros are real, the case of complex orthogonality produces polynomials whose zeros exhibit more complicated behavior. In fact, as the degree of the polynomials tends to infinity, the zeros accumulate on nontrivial curves in the complex plane.


In order to carry out an asymptotic analysis of the orthogonal polynomials with complex weights, a new problem arises which is the effective selection of a contour of integration for which subsequent analysis is possible.  It turns out that the effective selection of the contour of integration determines within it the accumulation set of the zeros of the orthogonal polynomials, which is the support of the equilibrium measure (suitably generalized to the complex case).  

The problem of determining this important set in the plane, which is later used as a portion of the contour of integration, is actually connected to a classical energy problem dating back at least to Gauss - the energy of a continuum of particles in the presence of an external field that experiences a repelling force whose potential is logarithmic.  The set is determined by considering, for each member $\Gamma$ of the class of admissible contours $\mathcal{T}$, the energy minimization problem on $\Gamma$, and then selecting a contour $\Gamma_{0} \in \mathcal{T}$ that maximizes this minimum energy.   In other words, $\Ga_0$ solves the following \textit{max-min} problem: 
\begin{equation}\label{max-min problem}
\underset{\Ga \in \mathcal{T}}{\mathrm{max}} \left\{ \underset{\substack{\supp(\nu) \subset \Ga \\ \nu(\C)=1 }}{\mathrm{min}} \left\{ \underset{\Ga\,\times\,\Ga}{\iint} \log \frac{1}{|z-s|}\,\dd \nu(z) \dd \nu(s) + \int_{\Ga} \Re V(s)\, \dd \nu(s) \right\} \right\}.
\end{equation}  
The admissible sectors (in which the admissible equivalence classes of contours could approach $\infty$) are those in which the requirement
\begin{equation}\label{growth condition}
	\lim_{z\to \infty} \Re V \to +\infty
\end{equation}
 holds, which allows one to associate the \textit{Euler-Lagrange characterization} of the equilibrium measure\cite{SaffTotik} \begin{equation}\label{em711}
\bga
& U^{\nu}(z)+\frac{1}{2}\,\Re V(z)=\ell,\quad z\in \supp \nu,\\
& U^{\nu}(z)+\frac{1}{2}\,\Re V(z)\ge \ell,\quad z\in \Ga\setminus \supp \nu,
\ena
\end{equation}
where 
\begin{equation}\label{em811}
U^{\nu}(z)=\int_{\Ga} \log \frac {1}{|z-s|}\,\dd \nu(s) 
\end{equation}
is the {\it logarithmic potential} of the measure $\nu$ \cite{SaffTotik}.  There is quite a history of reseach centering on this variational problem in approximation theory and potential theory. See, for example \cite{KamRakh, KS, MFRakh1,MFRakh2, Rakh, Stahl, Stahl1} and references therein.  

In \cite{KS} the authors prove the quite general result that for an allowable\footnote{characterized by a notion of \textit{non-crossing partitions} of $\{1,\cdots,N\}$, where $N$ is the number of sectors in which \eqref{growth condition} holds, see \cite{KS}.} equivalence class $\mathcal{T}$ of contours, the solution $\Ga_0$ to the above extremal problem exists, the equilibrium measure and, thus, its support $J$ are unique, and the support $J \subset \Ga_0$ of the equilibrium measure is a finite union of disjoint analytic arcs. Moreover, they show that the support $J$ of the equilibrium measure is part of the critical graph of the quadratic differential $Q(z) \dd z^2$ (see \S \ref{SecQD} for some background on quadratic differentials) where $Q$ is the \textit{polynomial} (see Proposition 3.7 of \cite{KS})
\begin{equation}
	Q(z)=\left(-\om(z)+\frac{V'(z)}{2}\right)^2, 
\end{equation}
in which $\om$ is the \textit{resolvent} of the equilibrium measure
\begin{equation}
	\om(z)=\int_J\frac{\dd \nu_{\rm eq}(x)}{z-x}\,,\quad z\in \C\setminus J.
\end{equation}

Summarizing, we will consider the above max-min variational problem which is associated to the orthogonal polynomials with respect to $e^{-nV(z;\boldsymbol{t})}$, $\boldsymbol{t}\in \C^{2p-1}$, in which the contour $\Ga_{\boldsymbol{t}}$ in the complex $z$-plane, being the solution of the max-min problem, is chosen from the members of the equivalence class of contours $\mathcal{T}$ (defined in \S \ref{EqMeasCplxCase} below - each member being a simply connected curve that tends to $\infty$ in two different directions, in sectors surrounding the positive and negative real axis). For a "generic" choice of $\boldsymbol{t} \in \C^{2p-1}$, the support $J_{\boldsymbol{t}}$ of the equilibrium measure is a finite union of disjoint analytic arcs (which are also referred to as \textit{cuts}), at each endpoint the density of the equilibrium measure vanishes like a square root $\dd \nu_V(s;\boldsymbol{t}) =(2\pi \ii)^{-1} h(s;\boldsymbol{t}) \left(\sqrt{R(s;\boldsymbol{t})}\right)_+ \dd s$, where $h(s;\boldsymbol{t})$ and $R(s;\boldsymbol{t})$ are polynomials in $s$, and $R$ has the property that its only zeros are simple zeros at the endpoints of the cuts. Moreover, for a generic $\boldsymbol{t}$ the zeros of $h(s;\boldsymbol{t})$ do not lie on $J_{\boldsymbol{t}}$ and one can find a complementary set to $J_{\boldsymbol{t}}$ to build the desired infinite contour $\Ga_{\boldsymbol{t}}$ so that the requirement outside the support in \eqref{em711} is satisfied. In fact, for a generic $\boldsymbol{t}$ these complementary contours can all be chosen to satisfy the \textit{strict} inequality in \eqref{em711}, or equivalently chosen so that they all lie in the so-called $\boldsymbol{t}$-stable lands:
\begin{equation}\label{Re eta < 0}
\left\{ z : \Re \eta_q(z;\boldsymbol{t})  <0 \right\},
\end{equation}
where
\begin{equation}\label{eta q}
\eta_q(z;\boldsymbol{t}):=- \int^z_{b_q(\boldsymbol{t})} h(s;\boldsymbol{t})\sqrt{R(s;\boldsymbol{t})} \dd s,
\end{equation}
and $b_q(\boldsymbol{t})$ is the rightmost endpoint.

However, we may expect that the above \textit{regularity} properties do not hold for certain choices of $\boldsymbol{t}$. For example, for some values of $\boldsymbol{t}$ it could happen that

\begin{itemize}
	\item[(a)]   one or more zeros of $h(s;\boldsymbol{t})$ coincide with the endpoints and thus alter the square root vanishing of the density at one or more endpoints,  
	\item[(b)]	one or more zeros of $h(s;\boldsymbol{t})$ may hit the support $J_{\boldsymbol{t}}$ of the equilibrium measure, or 
	\item[(c)] it may not be possible to choose the complementary contours to entirely lie in the $\boldsymbol{t}$-stable lands.
\end{itemize}

Such values of $\boldsymbol{t}$ at  which the aforementioned regularity properties fail, also form \textit{boundaries} in the phase space $\C^{2p-1}$, across which the number of support cuts of the equilibrium measure changes.

 Let us highlight these irregularity properties at non-generic parameter values using the complex quartic external field: $$V(z;\sigma) \equiv \frac{z^4}{4} + \sigma \frac{z^2}{2}, \qquad \sigma \in \C,$$ for which one has (regular) one-cut, two-cut, and three-cut regions in the complex $\sigma$-plane which are denoted by $\mathcal{O}_1, \mathcal{O}_2,$ and $\mathcal{O}_3$ respectively. In \cite{BertolaTovbis2015} the phase diagrams for a variety of choices of integration contours for this model have been presented. In \cite{BGM} the particular case of admissible contours  that approach $\infty$ along the real axis was considered and the phase diagram (as shown in Figure \ref{fig:Phase Diagram}\footnote{Figures \ref{fig:Phase Diagram}, \ref{fig:No Connection transition}, and \ref{fig:stable and barren} are taken from \cite{BGM}.}) was proven.  Using the explicit formulae for the end-points of $J_{\sigma}$ and zeros of $h(z;\sigma)$ in the one-cut case, one can easily find that the non-generic parameter values corresponding to case (a) above are only $\sigma=\pm \ii \sqrt{12}$, for which the points $\mp z_0$ (zeros of $h(z;\sigma)$) coincide with the endpoints $\pm b_1$ \cite{BGM,BertolaTovbis2015}. The non-generic points on the boundaries labeled by $\ga_1$ and $\ga_2$ represent the $\sigma$ values for which the zeros of $h(z;\sigma)$ hit the support of the equilibrium measure (see Figure \ref{fig:No Connection transition}).  Figure \ref{fig:stable and barren} corresponds to item (c) above, in which the regions in light blue represent the $\sigma$-stable lands. Figures  \ref{fig:fertile barren -1 1.6} through \ref{fig:stable barren -1 4} show the contour $\Ga_{\sigma}$ for six choices of parameter $\sigma \in \mathcal{O}_1$, while Figure \ref{fig:stable barren -1.15 4} corresponds to a \textit{non-generic} value of $\sigma \in \ga_3$ (see Figure \ref{fig:Phase Diagram}) where the complementary part $\Ga_{\sigma}\setminus J_{\sigma}$ (the orange dashed line in \ref{fig:stable barren -1.15 4}) can not avoid going through at least one point which does not belong to the $\sigma$-stable lands (see item (c) above). Figure \ref{fig:stable barren -1.35 41} corresponds to $\sigma=-1.35+4\ii$ which is clearly not a one-cut parameter as there is no connection from the endpoint $b_1$ to $\infty$ in the sector originally chosen for the orthgonal polynomials, however, it turns out that it is a regular three-cut parameter \cite{BGM}.
 
 \begin{figure}[h]
 	\centering
 	\includegraphics[scale=0.25]{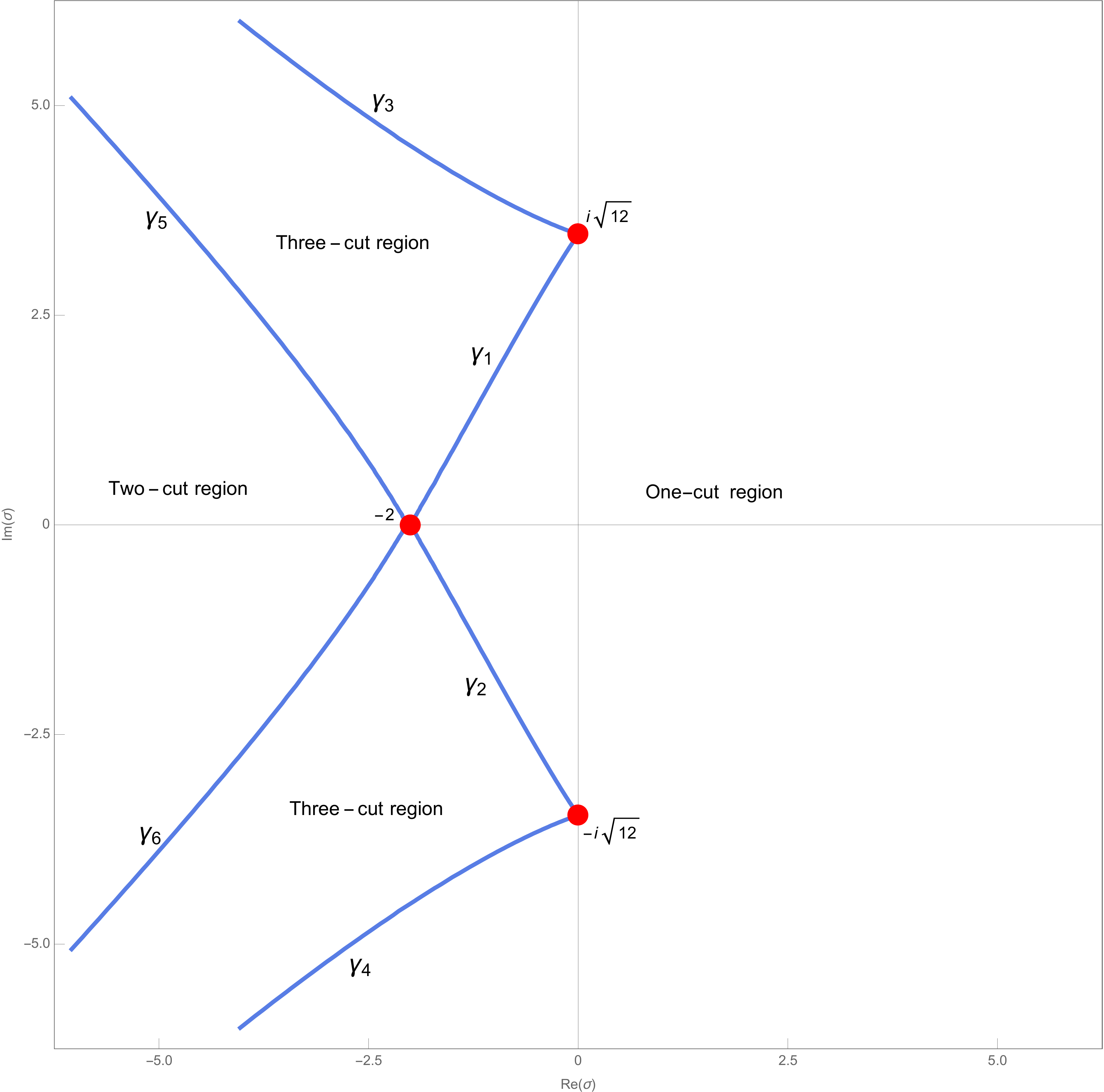}
 	\caption{The phase diagram of the complex quartic random matrix model in the $\sigma$-plane.}
 	\label{fig:Phase Diagram}
 \end{figure}

\begin{figure}[b]
	\centering
	\begin{subfigure}{0.31\textwidth}
		\centering
		\includegraphics[width=\textwidth]{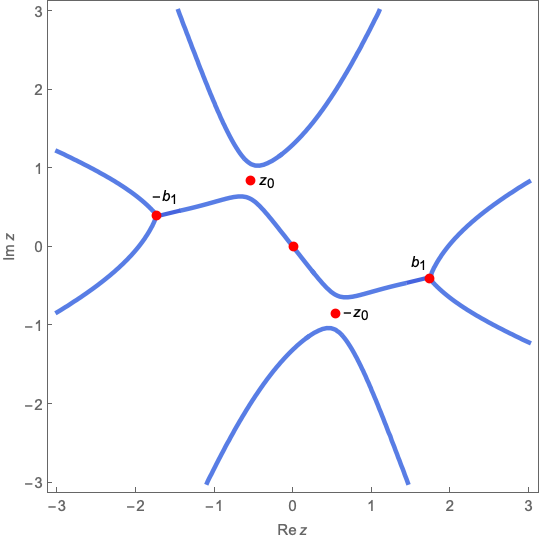}
		\caption{The critical graph $\mathscr{J}^{(1)}_{\sigma}$ of the one-cut quadratic differential for the complex quartic model at a $\sigma \in \mathcal{O}_1$. At this value of $\sigma$ all regularity properties are satisfied.}
		\label{fig:Connection and humps}
	\end{subfigure}%
	\hfill
	\begin{subfigure}{.31\textwidth}
		\centering
		\includegraphics[width=\textwidth]{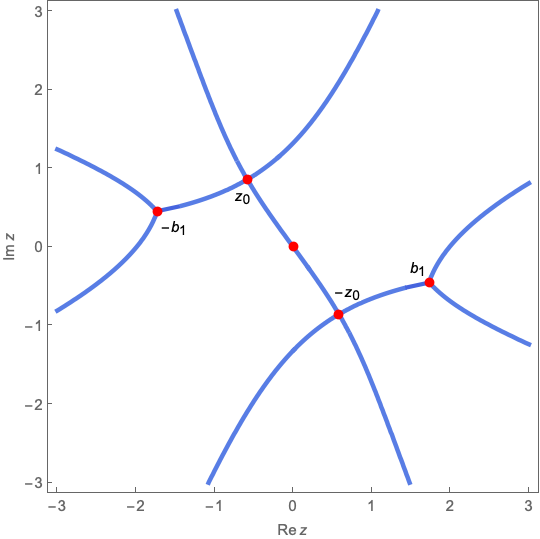}
		\caption{The critical graph $\mathscr{J}^{(1)}_{\sigma}$ at a critical value $\sigma \in \ga_1$ (see Figure \ref{fig:Phase Diagram}). The zeros of $h(z;\sigma)$ at this value hit $J_{\sigma}$, and thus $\sigma \notin \mathcal{O}_1$.}
		\label{fig:Connection about to be lost}    
	\end{subfigure}
	\hfill
	\begin{subfigure}{.31\textwidth}
		\centering
		\includegraphics[width=\textwidth]{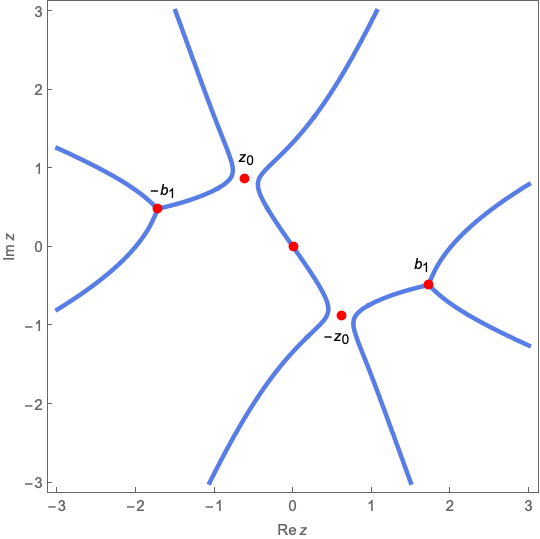}
		\caption{The critical graph $\mathscr{J}^{(1)}_{\sigma}$ at a $\sigma \notin \mathcal{O}_1$. It turns out that this value of $\sigma$ actually is a regular three-cut value as shown in \cite{BGM}.}
		\label{fig:No Connection}    
	\end{subfigure}
	\caption{Snapshots of the continuous deformation (see Theorem \ref{continuous deformations of one cut critical graph1}) of the critical graph $\mathscr{J}^{(1)}_{\sigma}$
}
	\label{fig:No Connection transition}
\end{figure}

\begin{figure}[h]
	\centering
	\begin{subfigure}{0.24\textwidth}
		\centering
		\includegraphics[width=\textwidth]{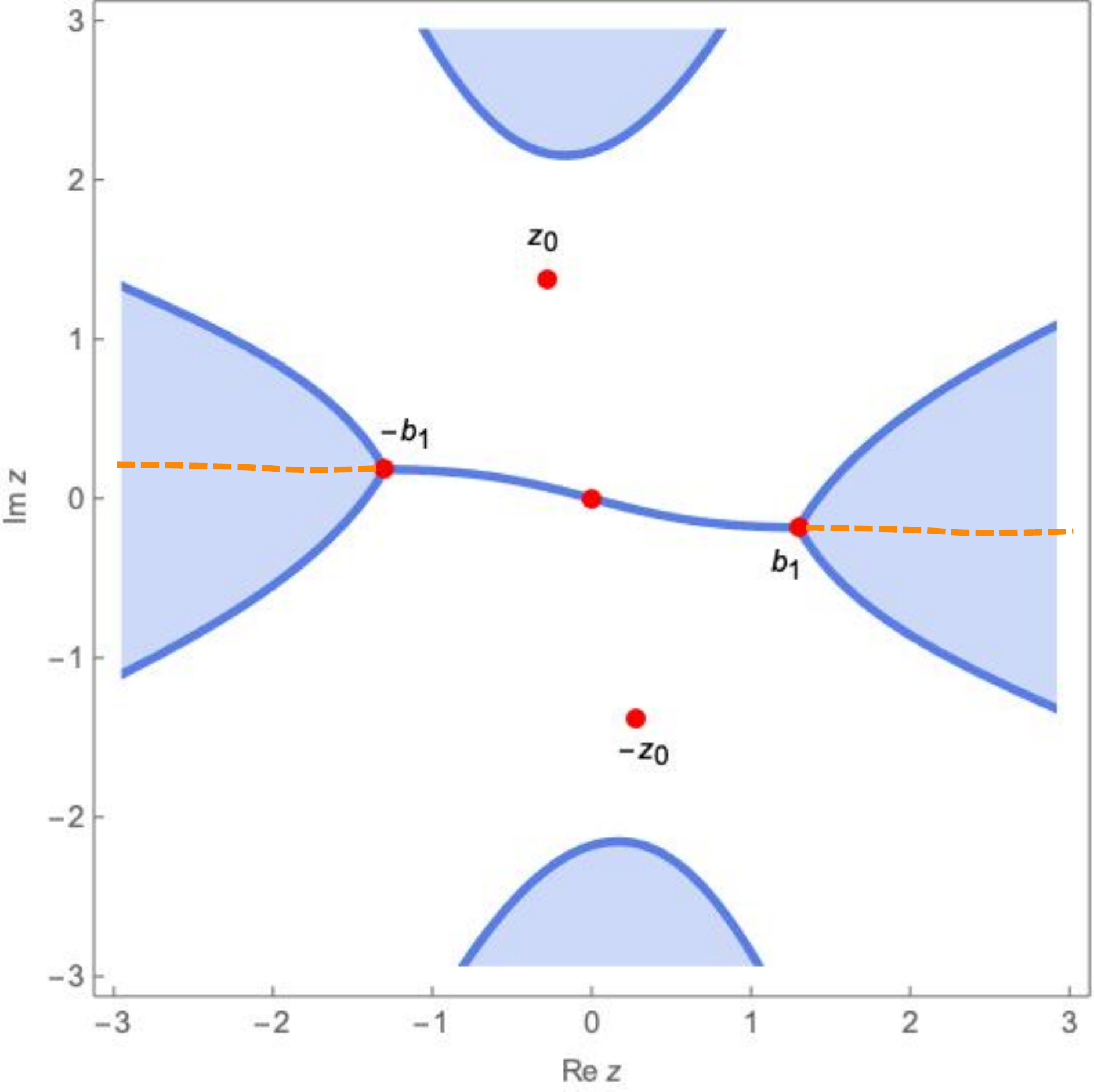}
		\caption{$\sigma=1+\ii \in \mathcal{O}_1$.}
		\label{fig:fertile barren -1 1.6}
	\end{subfigure}%
	\begin{subfigure}{.24\textwidth}
		\centering
		\includegraphics[width=\textwidth]{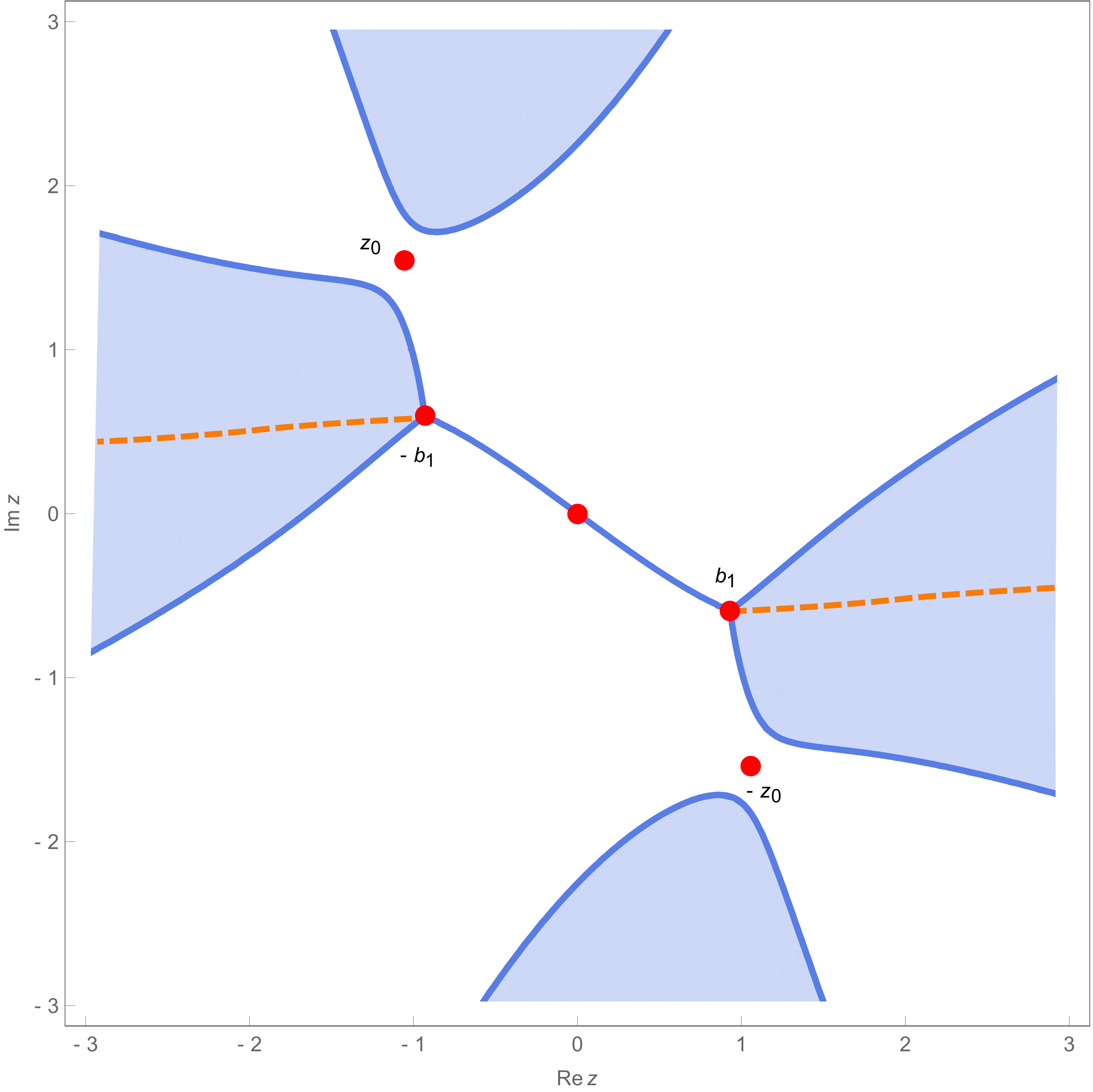}
		\caption{$\sigma=1+3.8\ii \in \mathcal{O}_1$.}
		\label{fig:stable barren 1 3.8}    
	\end{subfigure}
	\begin{subfigure}{.24\textwidth}
		\centering
		\includegraphics[width=\textwidth]{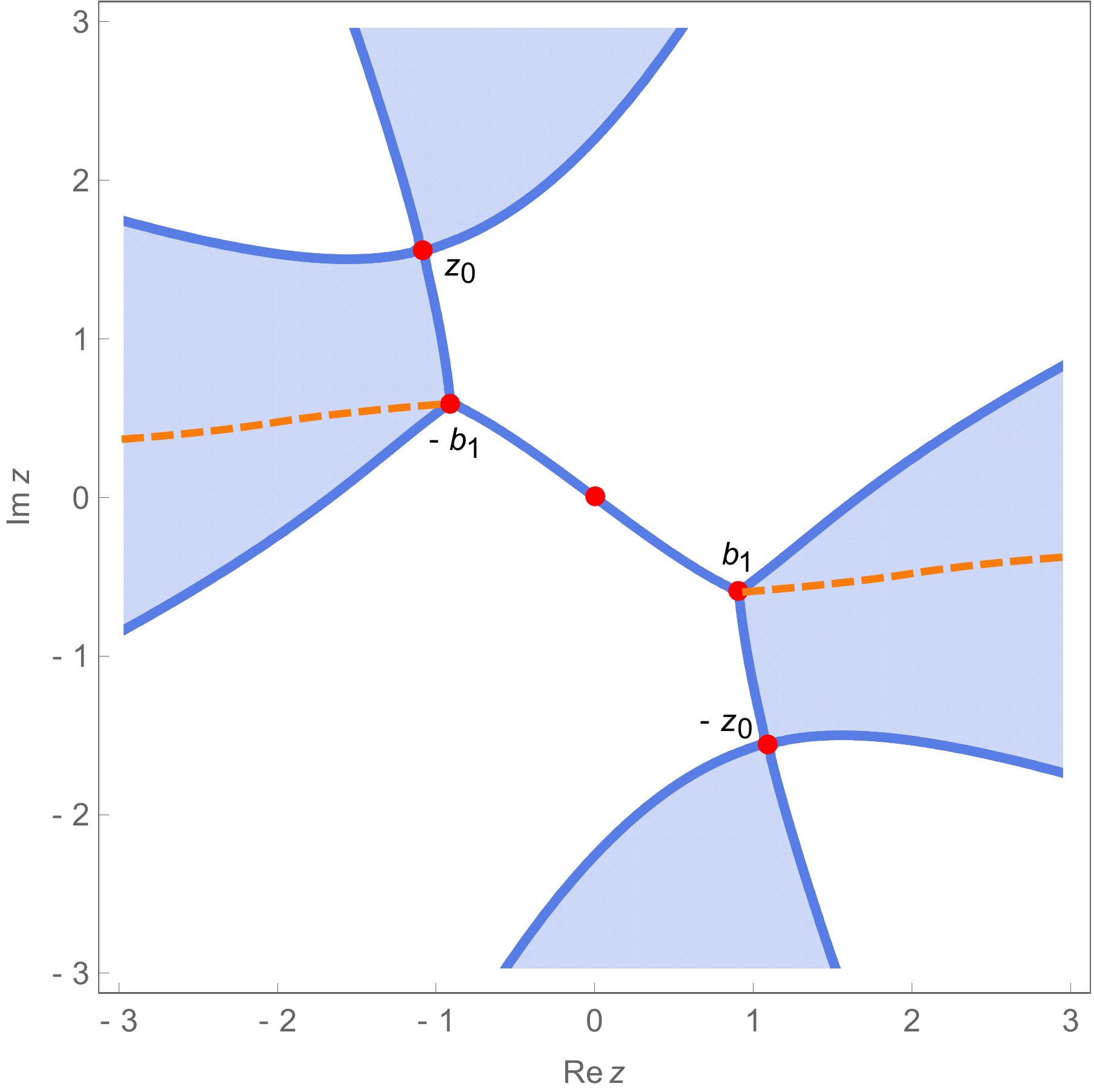}
		\caption{$\sigma=1+3.92\ii \in \mathcal{O}_1$.}
		\label{fig:stable barren critical}    
	\end{subfigure}
	\begin{subfigure}{.24\textwidth}
		\centering
		\includegraphics[width=\textwidth]{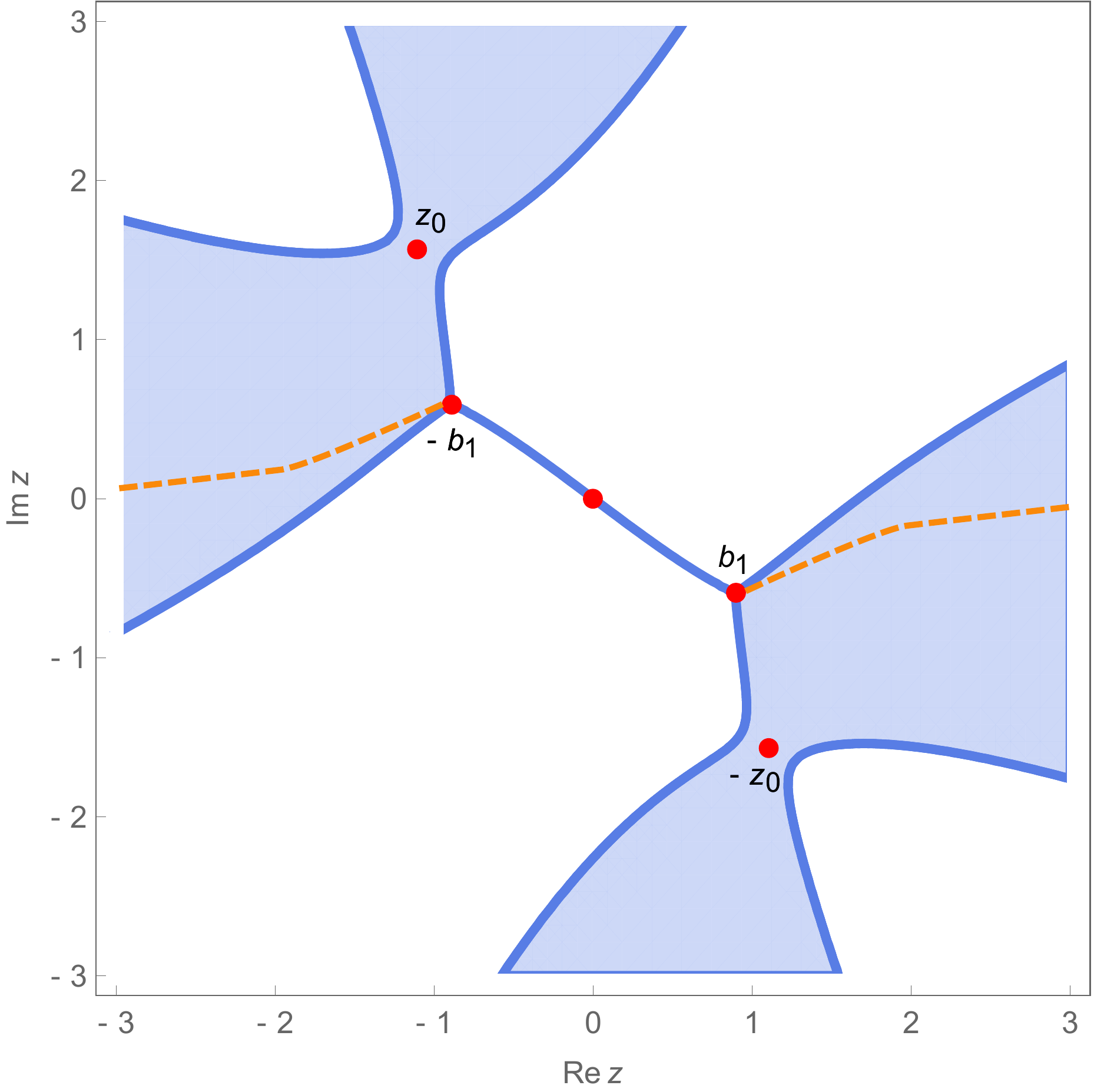}
		\caption{$\sigma=1+4\ii \in \mathcal{O}_1$.}
		\label{fig:stable barren 1 4}    
	\end{subfigure}
	
	\begin{subfigure}{0.24\textwidth}
		\centering
		\includegraphics[width=\textwidth]{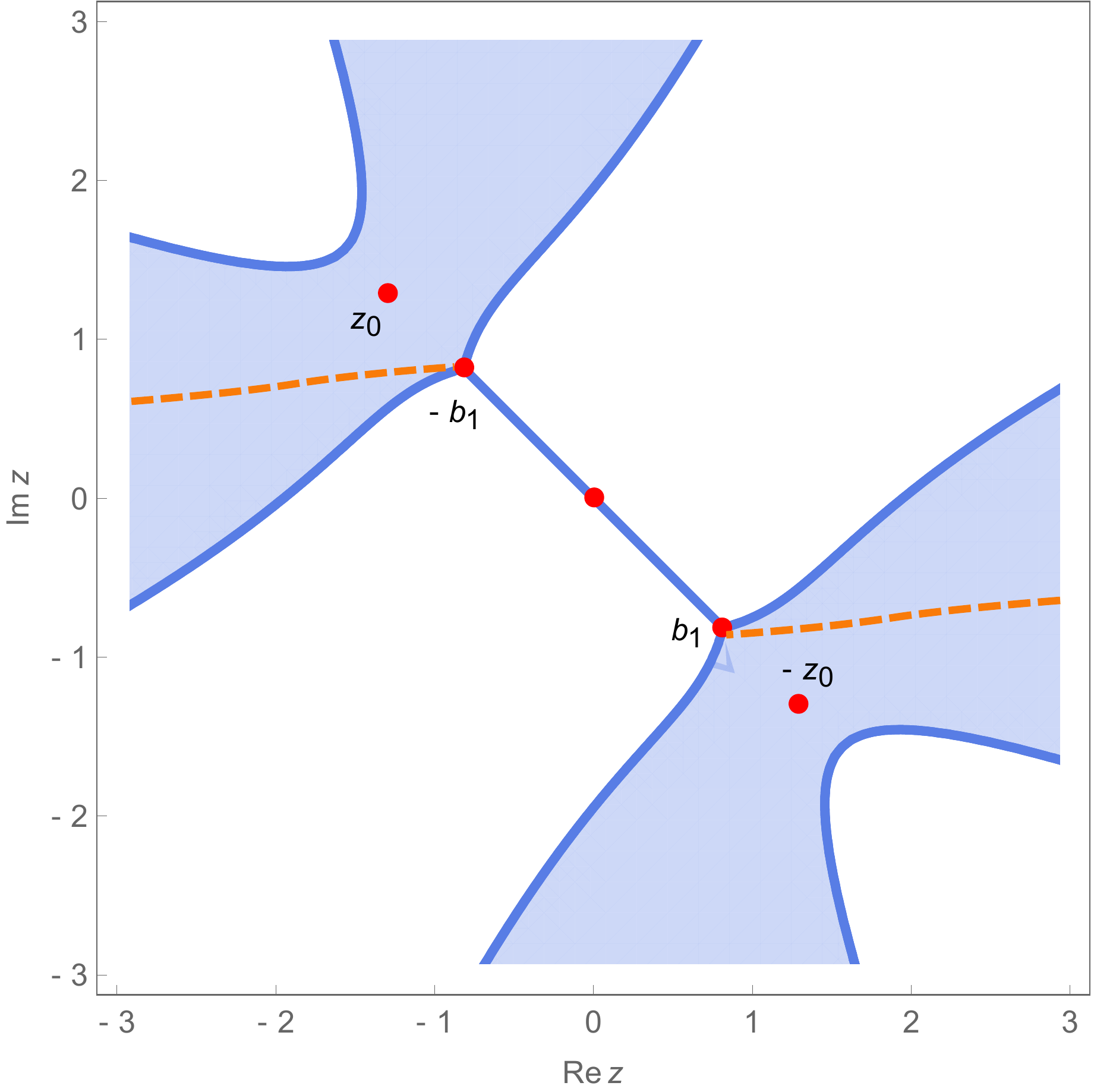}
		\caption{$\sigma=4 \ii \in \mathcal{O}_1$.}
		\label{fig:fertile barren 4}
	\end{subfigure}%
	\begin{subfigure}{.24\textwidth}
		\centering
		\includegraphics[width=\textwidth]{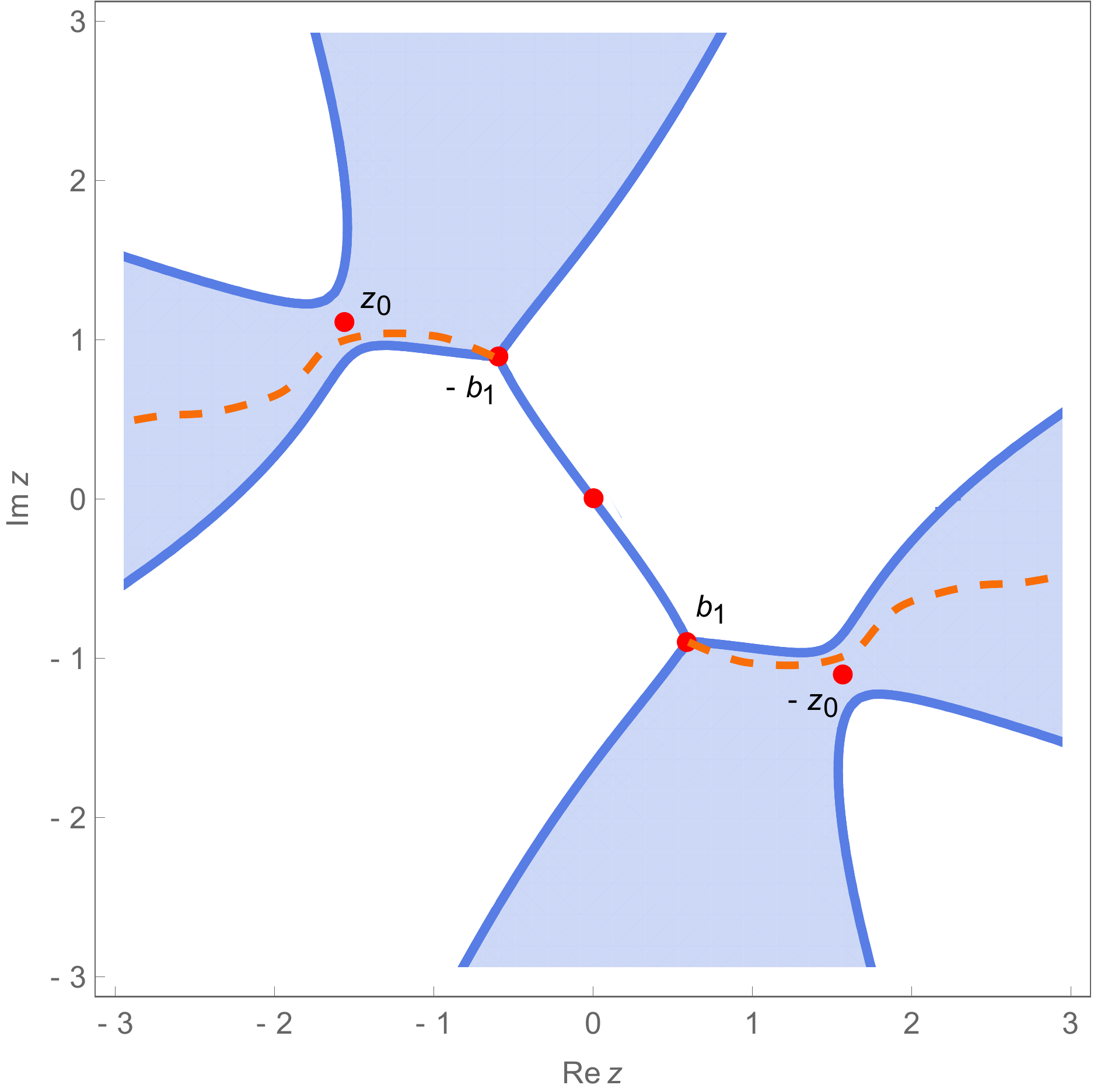}
		\caption{$\sigma=-1+4\ii \in \mathcal{O}_1$.}
		\label{fig:stable barren -1 4}    
	\end{subfigure}
	\begin{subfigure}{.24\textwidth}
		\centering
		\includegraphics[width=\textwidth]{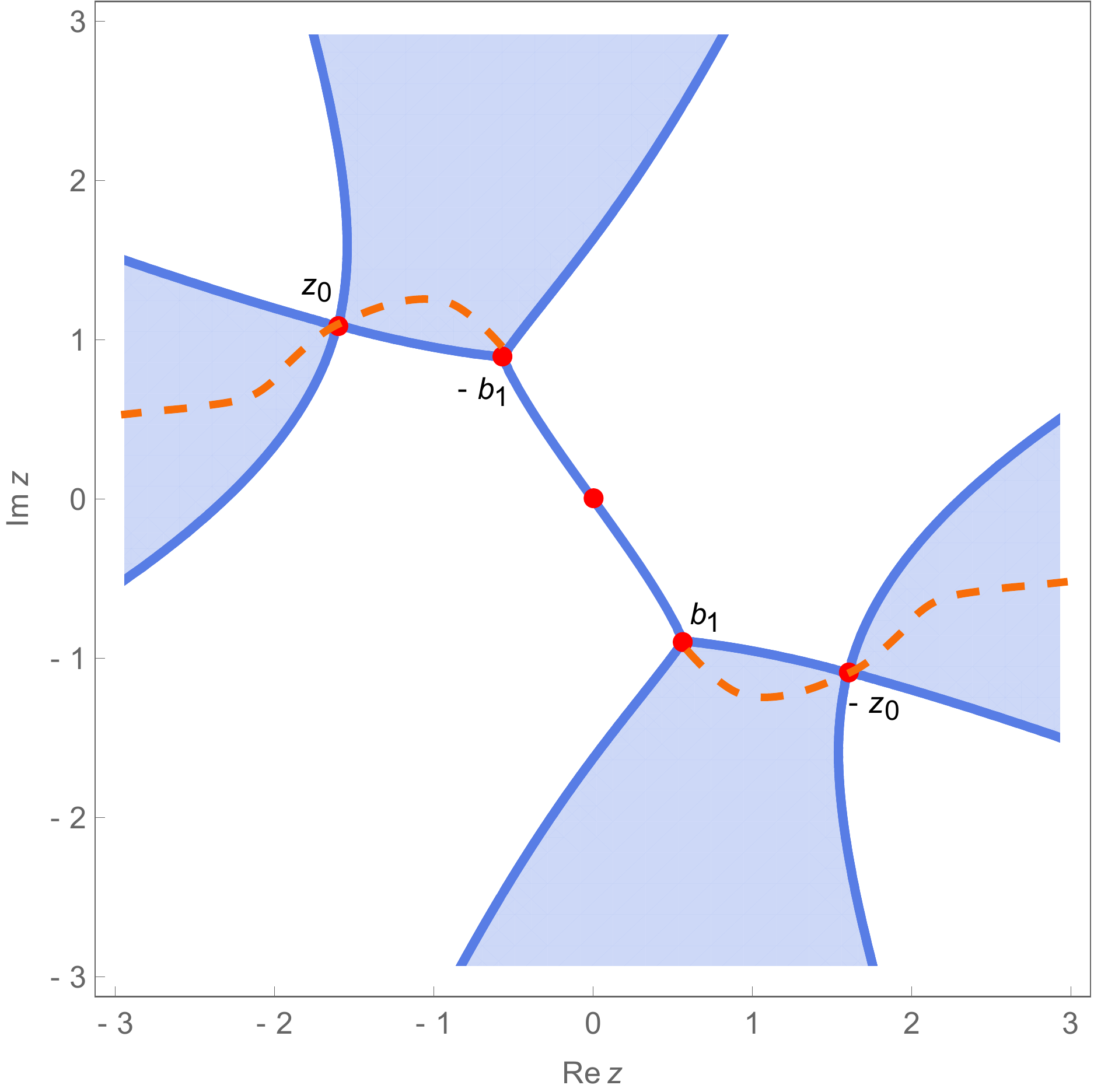}
		\caption{$\sigma_{\mbox{cr}}\simeq -1.15+4\ii \notin \mathcal{O}_1$.}
		\label{fig:stable barren -1.15 4}    
	\end{subfigure}
	\begin{subfigure}{.24\textwidth}
		\centering
		\includegraphics[width=\textwidth]{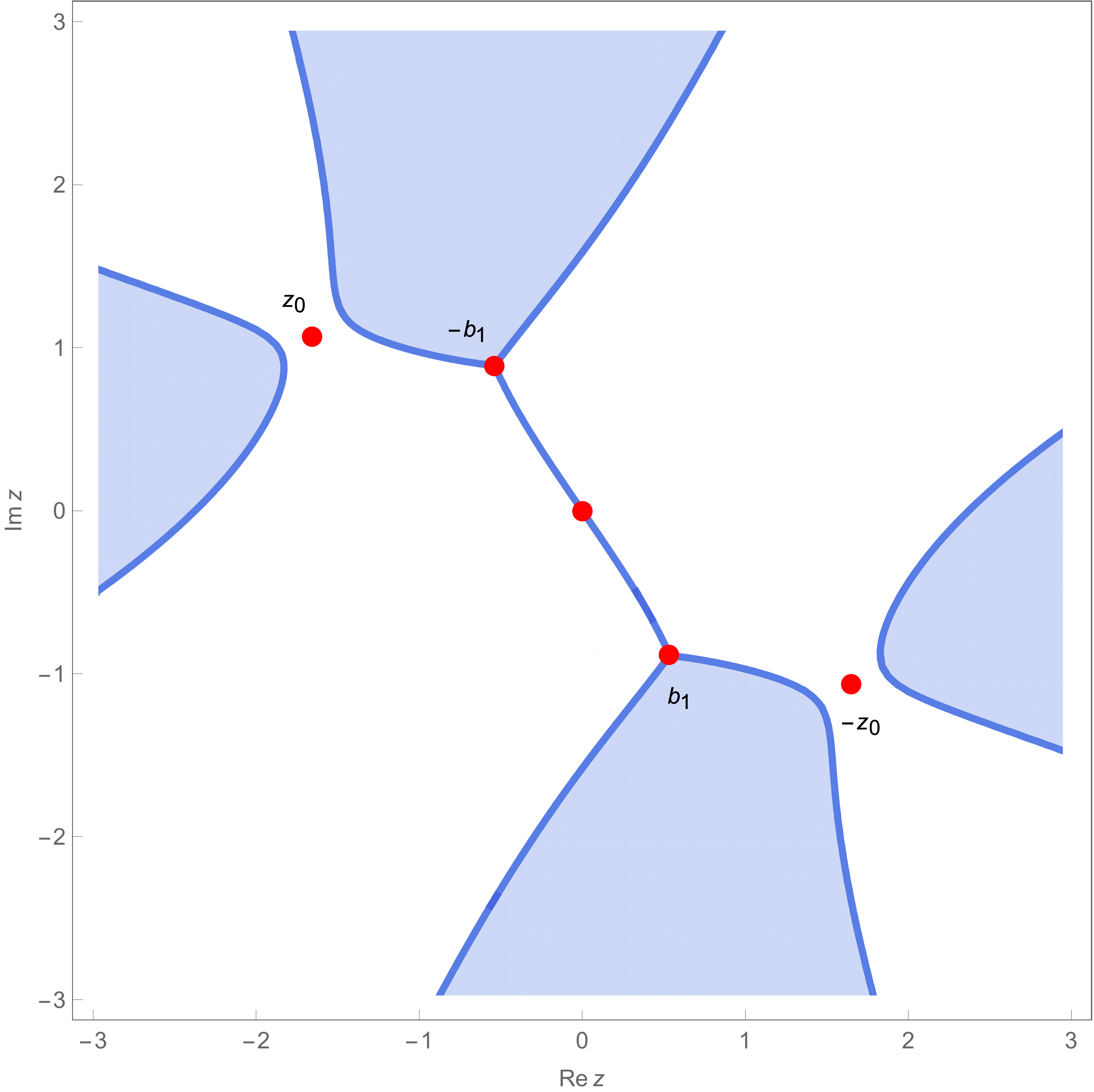}
		\caption{$\sigma=-1.35+4\ii\notin \mathcal{O}_1$.}
		\label{fig:stable barren -1.35 41}    
	\end{subfigure}
	\caption{
		This sequence of figures shows allowable regions in light blue through which the contour of integration (for the orthogonal polynomials) must pass, for a varying collection of values of $\sigma$.	Regions in light blue are the $\sigma$-stable lands where $  \Re[\eta_1(z;\sigma)]<0$ and the regions in white are the $\sigma$-unstable lands where $  \Re[\eta_1(z;\sigma)]>0$ (see \eqref{Re eta < 0} and \eqref{eta q}).}
	\label{fig:stable and barren}
\end{figure}

It should also be mentioned that  transitions through these boundaries correspond qualitatively to phase transitions in the asymptotic behavior of the orthogonal polynomials.  For example, in the simpler case of real potentials, if there is one contour comprising the support, the oscillatory behavior of the polynomails is expressed via trigonometric functions  \cite{Deiftetal}, while if there are several intervals, then the oscillatory behavior is descirbed by a Jacobi theta function associated to the Riemann surface of $R(z;\boldsymbol{t})$ \cite{Deiftetal2}. 

The main purpose of this work is to present a brief self-contained proof of the fact that if for some $\boldsymbol{t}^* \in \C^{2p-1}$ the corresponding equilibrium measure is $q$-cut regular, then there exists a small enough neighborhood $D_\ep(\boldsymbol{t}^*)$ of $\boldsymbol{t}^*$  so that for all $\boldsymbol{t} \in D_\ep(\boldsymbol{t}^*) $ the associated equilibrium measures are also $q$-cut regular. Lemma 4.2 of \cite{BertolaTovbis2016} gives another proof of  the openness of regular set of parameters using the determinantal form of the function $\eta_q$, and uses arguments from \cite{TVZ,TV}.  The proof that we present here avoids computations of the Jacobian determinant, but rather has the flavor of a vanishing lemma from the theory of Riemann-Hilbert problems, which permits us to arrive at a contradiction if the Jacobian determinant should vanish at a regular point. Some of these arguments use ideas based on Riemann surface theory contained in \cite{Ber} in which Lemma 4.1 provides a proof. Indeed, in \S \ref{Sec Openness} we prove: 
\begin{theorem}\label{main thm}
	The regular  $q$-cut regime is open.
\end{theorem}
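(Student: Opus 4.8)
The plan is to encode the regular $q$-cut data as the unique nearby solution of a square real-analytic system of \emph{endpoint equations} and then run the implicit function theorem; the only nontrivial point will be the nonsingularity of the corresponding Jacobian at a regular point, which we obtain by a vanishing-lemma argument rather than by computing a determinant. For a candidate $q$-cut configuration write $R(z)=\prod_{j=1}^{2q}(z-a_j)$ over the $2q$ endpoints and $h(z)=z^{2p-1-q}+\sum_{k=0}^{2p-2-q}h_kz^k$, and set $Y:=h\sqrt{R}$ with the branch $\sqrt R\sim z^q$ at $\infty$, so that $\om=\tfrac12(V'-Y)$ is the putative resolvent. The unknowns $(a_1,\dots,a_{2q},h_0,\dots,h_{2p-2-q})$ are pinned down by: (i) the \emph{normalization conditions} $Y(z)-V'(z)=-2/z+\bigO(z^{-2})$ as $z\to\infty$, that is, the vanishing of the Laurent coefficients of $z^{2p-2},\dots,z^{0}$ together with $[z^{-1}]Y=-2$, amounting to $2p$ complex equations; and (ii) the \emph{period conditions} $\Re\oint_{\mathcal C}Y(z)\,\dd z=0$ as $\mathcal C$ runs over a set of $2(q-1)$ real-independent cycles — the $q-1$ independent small loops around the individual arcs and the $q-1$ ``gap'' cycles running from one arc to the next — which encode that $J$ supports a genuine unit-mass equilibrium measure satisfying the Euler--Lagrange equalities in \eqref{em711} with one common constant $\ell$ on all $q$ arcs (cf. \cite{KS,BertolaTovbis2016}). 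Splitting into real and imaginary parts, this is a square system of $4p+2(q-1)$ real equations in the $4p+2(q-1)$ real unknowns $(\Re a_j,\Im a_j,\Re h_k,\Im h_k)$, with coefficients real-analytic in $(\Re\boldsymbol t,\Im\boldsymbol t)$, and it is solved at $\boldsymbol t^{*}$ by the regular $q$-cut data.

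\emph{Nonsingularity of the Jacobian.} Suppose the Jacobian in the endpoint variables were singular at the regular point; then some nonzero real vector $(\dot a_j,\dot h_k)$, realized as the velocity of a curve $\tau\mapsto(a_j(\tau),h_k(\tau))$ with $\dot{\boldsymbol t}=0$, is annihilated by the linearized system. From $\dd R/\dd\tau=-R\sum_j\dot a_j/(z-a_j)$ one computes $\dot Y=\tfrac{1}{2\sqrt R}\bigl(2\dot hR-h\sum_j\dot a_j\prod_{i\neq j}(z-a_i)\bigr)=:\tfrac{P(z)}{2\sqrt{R(z)}}$ with $\deg P\le 2p+q-2$, and the assignment $(\dot a_j,\dot h_k)\mapsto P$ is a linear isomorphism onto polynomials of degree $\le 2p+q-2$: injectivity follows by evaluating $2\dot hR=h\sum_j\dot a_j\prod_{i\neq j}(z-a_i)$ at $z=a_\ell$ and using $h(a_\ell)\neq0$ and the distinctness of the $a_\ell$ (which is precisely item (a) of regularity) to get $\dot a_\ell=0$, hence $\dot h\equiv0$. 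The linearized normalization conditions say exactly $\dot Y=\bigO(z^{-2})$ at $\infty$, which forces $\deg P\le q-2$; hence $\vartheta:=\dot Y\,\dd z=\tfrac{P(z)}{2\sqrt{R(z)}}\,\dd z$ is a holomorphic differential on the compact hyperelliptic surface $\mathcal R:\ y^2=R(z)$ of genus $q-1$. The linearized period conditions say $\Re\oint_{\mathcal C}\vartheta=0$ for $\mathcal C$ in a full homology basis of $\mathcal R$, i.e. all periods of $\vartheta$ are purely imaginary; therefore $z\mapsto\Re\int^{z}\vartheta$ is a single-valued harmonic function on the compact surface $\mathcal R$, hence constant, hence $\vartheta\equiv0$, i.e. $P\equiv0$. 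By the isomorphism above this forces $(\dot a_j,\dot h_k)=0$, a contradiction. So the Jacobian is invertible, and the implicit function theorem yields a unique real-analytic branch $\boldsymbol t\mapsto(a_j(\boldsymbol t),h_k(\boldsymbol t))$ near $\boldsymbol t^{*}$ agreeing with the regular data at $\boldsymbol t^{*}$; in particular the real and imaginary parts of all endpoints are real-analytic in $(\Re\boldsymbol t,\Im\boldsymbol t)$.

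\emph{From solvability to openness.} This branch furnishes $\om_{\boldsymbol t}=\tfrac12\bigl(V'(\cdot;\boldsymbol t)-h(\cdot;\boldsymbol t)\sqrt{R(\cdot;\boldsymbol t)}\bigr)$, a polynomial $Q(\cdot;\boldsymbol t)=\tfrac14 h^2R$, and a candidate equilibrium measure $\nu_{\boldsymbol t}$ carried by the arcs of the critical graph of $Q\,\dd z^2$, all real-analytic in $\boldsymbol t$. The features of $q$-cut regularity beyond the (already imposed) Euler--Lagrange equalities are open conditions: (a) the $2q$ endpoints remain simple zeros of $Q$ with $h$ nonvanishing there; (b) the other zeros of $h$ remain off the continuously varying compact set $J_{\boldsymbol t}$; (c) a complementary contour can still be placed inside the $\boldsymbol t$-stable land $\{\Re\eta_q(\cdot;\boldsymbol t)<0\}$ of \eqref{Re eta < 0}. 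At $\boldsymbol t^{*}$ all three hold, and each persists under small perturbations — finitely many strict inequalities, non-coincidence of disjoint compacta, and (via continuous deformation of the critical graph of $Q\,\dd z^2$, cf. Theorem~\ref{continuous deformations of one cut critical graph1}) persistence of the $q$-arc structure — so they hold on some $D_\ep(\boldsymbol t^{*})$. There $\nu_{\boldsymbol t}$ is a positive unit measure satisfying the full Euler--Lagrange characterization \eqref{em711} along an admissible $\Ga$, hence by uniqueness of the equilibrium measure \cite{KS} it equals $\nu_{\rm eq}(\cdot;\boldsymbol t)$, which is therefore $q$-cut regular. Thus $D_\ep(\boldsymbol t^{*})$ lies in the regular $q$-cut regime, which is the assertion.

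\emph{Main obstacle.} The crux is the middle step: identifying the full linearized constraint correctly — that variations of the endpoints and of $h$ are in bijection with polynomials of degree $\le 2p+q-2$, that imposing normalization cuts this down exactly to the holomorphic differentials on $\mathcal R$, and that the period conditions act on these as ``all periods imaginary'' — after which the harmonic-function vanishing lemma closes the argument in one line. Getting the real-versus-imaginary bookkeeping right so that the endpoint system is genuinely square is the delicate part, and the algebraic implication $P\equiv0\Rightarrow(\dot a_j,\dot h_k)=0$ is exactly where regularity item (a) is indispensable.
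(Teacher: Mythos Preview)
Your proof is correct and follows the paper's overall architecture (square real-analytic endpoint system $\Rightarrow$ implicit function theorem $\Rightarrow$ persistence of regularity), but the two key steps are carried out differently.

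\textbf{Jacobian nonsingularity.} You parametrize by the $2q$ endpoints \emph{and} the $2p-1-q$ coefficients of $h$, giving a $(4p+2q-2)$-dimensional real system, and you identify the linearized map with polynomials $P$ of degree $\le 2p+q-2$; after the normalization constraints this cuts down to $\deg P\le q-2$, i.e.\ to holomorphic differentials on the genus-$(q-1)$ hyperelliptic curve $y^2=R$, and the period constraints become ``all periods purely imaginary'', whence $\Re\int\vartheta$ is a global harmonic function on the compact surface and $\vartheta\equiv0$. The paper instead parametrizes by the $4q$ endpoint coordinates alone (deriving $h$ from $V'$ and $R$ via \eqref{h h h}), and proves the same vanishing by an explicit planar Stokes/Green computation showing $\iint_{\C}|B|^2\,\dd A=0$ for the corresponding $B(z)=Q(z)/\sqrt{R(z)}$. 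Your argument is the Riemann-surface reformulation of theirs: the paper's $L^2$ identity is a hands-on realization of the Riemann bilinear relations that underlie your one-line harmonic-function step. Your version is cleaner and makes transparent why genus $q-1$ and $2(q-1)$ real period conditions are exactly what is needed; the paper's version is more self-contained (no Riemann-surface vocabulary) and isolates explicitly where $h(a_\ell)\neq0$ enters, as you also do when proving injectivity of $(\dot a,\dot h)\mapsto P$.

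\textbf{Openness.} You treat the remaining regularity items as generic open conditions (finitely many strict inequalities, disjointness of compacta, persistence of the strait in the stable lands via Theorem~\ref{continuous deformations of one cut critical graph1}). The paper gives a more detailed topological argument, splitting into the cases where the extra zeros $z_\ell(\boldsymbol t^*)$ do or do not sit on $\mathscr J^{(q)}_{\boldsymbol t^*}\setminus J^{(q)}_{\boldsymbol t^*}$ and invoking Teichm\"uller's lemma to rule out two-vertex geodesic polygons and to count trajectories at $\infty$. Your compressed treatment is valid but leans more heavily on Theorem~\ref{continuous deformations of one cut critical graph1}; the paper's case analysis makes explicit which degenerations are being excluded and why.
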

The proof of Theorem \ref{main thm} relies upon showing that the underlying equations for finding the endpoints are solvable for every $\boldsymbol{t} \in D_\ep(\boldsymbol{t}^*)$. To this end in \S \ref{SecSolvability} we formulate the end-point equations in the $q$-cut case and prove the following result:
\begin{theorem}\label{solvability thm}
	The equations which determine the $2q$ endpoints of the regular $q$-cut regime are solvable for all $\boldsymbol{t}$ in a small enough neighborhood of a regular q-cut point $\boldsymbol{t}^*$, all endpoints $a_j(\boldsymbol{t}), b_j(\boldsymbol{t})$ are distinct, and $\Re a_j(\boldsymbol{t})$, $\Im a_j(\boldsymbol{t})$, $1\leq j \leq q$ are real-analytic functions of $\Re t_k$, $\Im t_k$, $1\leq k\leq 2p-1$.
\end{theorem}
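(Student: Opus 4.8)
The plan is to encode the $2q$ endpoints as the zero set of a square real-analytic system and invoke the real-analytic implicit function theorem; the heart of the matter is non-degeneracy of the relevant Jacobian at $\boldsymbol t^{*}$, which I would establish not by a direct determinant computation but by a vanishing-lemma argument on the associated Riemann surface.

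First I would write down the endpoint equations. In the regular $q$-cut regime the density is $\dd\nu_V(s;\boldsymbol t)=(2\pi\ii)^{-1}h(s;\boldsymbol t)(\sqrt{R(s;\boldsymbol t)})_+\,\dd s$ with $R(z)=\prod_{j=1}^q(z-a_j)(z-b_j)$ and $h$ a polynomial of degree $2p-1-q$, and one sets $y(z):=h(z)\sqrt{R(z)}=\tfrac12 V'(z)-\om(z)$. For fixed endpoints the coefficients of $h$ are determined, by back-substitution, from matching the top $2p-q$ Laurent coefficients of $y$ at $\infty$ with those of $\tfrac12 V'$; the remaining requirement $y(z)=\tfrac12 V'(z)-z^{-1}+O(z^{-2})$ then becomes $q+1$ equations $\mathcal M_0=\dots=\mathcal M_q=0$, holomorphic in the endpoints and in $\boldsymbol t$. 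To these one adjoins the $2(q-1)$ real conditions $\Re\oint_{\gamma}y(z)\,\dd z=0$ with $\gamma$ running over a canonical homology basis of $\mathcal R_{\boldsymbol a}\colon w^2=R(z)$ — equivalently, that each cut carries real mass and that the Euler–Lagrange constant $\ell$ in \eqref{em711} agrees across consecutive cuts. Separating real and imaginary parts of the $\mathcal M_k$, this is a real-analytic map $\mathcal F=\mathcal F(\Re\boldsymbol a,\Im\boldsymbol a;\Re\boldsymbol t,\Im\boldsymbol t)\colon\R^{4q}\to\R^{4q}$ in the endpoint variables, vanishing at the known endpoints $\boldsymbol a^{*}$ when $\boldsymbol t=\boldsymbol t^{*}$.

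For the implicit function theorem it suffices to show the differential of $\mathcal F$ in $(\Re\boldsymbol a,\Im\boldsymbol a)$ is invertible at $(\boldsymbol a^{*};\boldsymbol t^{*})$. Assume not; then there is a nonzero infinitesimal endpoint variation $(\dot a_j,\dot b_j)_{j=1}^q$ (conjugate variables varying conjugately) killing the linearization of every equation. Let $\dot y$ be the induced first variation of $y$; one finds $\dot y=\dot h\,\sqrt R-\tfrac12\,h\sqrt R\sum_j\big(\tfrac{\dot a_j}{z-a_j}+\tfrac{\dot b_j}{z-b_j}\big)$ with $\dot h$ a polynomial. I claim $\dot y\,\dd z$ extends to a holomorphic differential on $\mathcal R_{\boldsymbol a^{*}}$, which has genus $q-1$ since the $2q$ branch points are distinct (part of $q$-cut regularity). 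Near a branch point, in the local coordinate $\tau=(z-a_j)^{1/2}$, the term $h\sqrt R/(z-a_j)$ has at worst a simple pole while $\dd z$ has a simple zero, so $\dot y\,\dd z$ is regular there; at each point over $\infty$ the \emph{preserved} conditions $\mathcal M_k=0$ force $\dot y=O(z^{-2})$, which against the double pole of $\dd z$ is again regular. The preserved period conditions give $\Re\oint_{\gamma}\dot y\,\dd z=0$ for every cycle $\gamma$: the variation of $\oint_\gamma y\,\dd z$ equals $\oint_\gamma\dot y\,\dd z$, the boundary terms $y(a_j)\dot a_j$, $y(b_j)\dot b_j$ that arise when an integration limit is a moving branch point vanishing because $\sqrt R$ vanishes there.

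A holomorphic differential on a compact Riemann surface all of whose periods have vanishing real part is identically zero — in a basis normalized on the $A$-cycles, zero real $A$-periods make the coefficients purely imaginary, and then zero real $B$-periods force $(\Im\tau)\boldsymbol r=0$, so $\boldsymbol r=0$ since the period matrix $\tau$ has positive-definite imaginary part. Hence $\dot y\equiv 0$. But near $a_j$, $\dot y=-\dfrac{h(a_j)\sqrt{R(z)}}{2(z-a_j)}\,\dot a_j+O(1)$, whose leading singular term is a nonzero multiple of $\dot a_j$ because regularity forces $h(a_j)\neq 0$ (the zeros of $h$ avoid the endpoints); thus $\dot y\equiv 0$ implies $\dot a_j=0$, and likewise $\dot b_j=0$, contradicting nonvanishing of the variation. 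Therefore the Jacobian is invertible, and the real-analytic implicit function theorem produces a real-analytic solution $\boldsymbol a(\boldsymbol t)$ of the endpoint equations on a ball $D_\ep(\boldsymbol t^{*})$ with $\boldsymbol a(\boldsymbol t^{*})=\boldsymbol a^{*}$, so $\Re a_j,\Im a_j$ (and $\Re b_j,\Im b_j$) are real-analytic in $\Re t_k,\Im t_k$; shrinking $\ep$, the $2q$ endpoints stay pairwise distinct by continuity. The main obstacle is precisely this Jacobian step: checking that $\dot y\,\dd z$ is a bona fide holomorphic differential (the behavior at $\infty$ uses that \emph{all} the asymptotic matching conditions, not merely finitely many coefficients, are preserved), identifying the full set of period conditions so that ``all periods have zero real part'' is genuinely available, and using $q$-cut regularity at the two places it is needed — distinctness of the branch points and non-vanishing of $h$ there.
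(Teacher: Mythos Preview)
Your argument is correct and follows the same overall architecture as the paper's proof: both reduce to showing the real Jacobian of the $4q\times 4q$ endpoint system is nonsingular at $\boldsymbol t^{*}$ by a vanishing-lemma argument, both identify the infinitesimal variation of $h\sqrt{R}$ (your $\dot y$, the paper's $B$) as the central object, both use the linearized moment conditions to force $\dot y=O(z^{-2})$ at infinity, and both finish by invoking $h(a_j)\neq 0$, $h(b_j)\neq 0$ (a consequence of regularity) to pass from $\dot y\equiv 0$ to $\dot a_j=\dot b_j=0$.

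The genuine difference is in how the vanishing of $\dot y$ is established. You recognize $\dot y\,\dd z$ as a holomorphic differential on the genus-$(q-1)$ hyperelliptic curve $w^2=R(z)$ and invoke the classical fact (via positive definiteness of $\Im\tau$) that a holomorphic differential all of whose periods are purely imaginary must vanish. The paper instead carries out this step \emph{by hand} in the plane: it writes $B(s)=Q(s)/R^{1/2}(s)$ with $\deg Q\le q-2$, introduces the primitive $u(z)=\int_{+\infty}^z B$, and applies Green's theorem to $\iint_{\C}|B|^2\,\dd A$ on the two regions bounded by $\Gamma$; after a careful bookkeeping of the jumps of $u$ and $B$ across cuts and gaps, the linearized cut and gap conditions make the boundary contribution telescope to zero, whence $B\equiv 0$. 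Your route is shorter and conceptually cleaner, since it names the mechanism (Riemann bilinear relations) rather than re-deriving it; the paper's explicit Stokes computation, on the other hand, is entirely self-contained and does not presuppose any Riemann-surface machinery, which matches the paper's stated aim of giving a proof ``with the flavor of a vanishing lemma from the theory of Riemann--Hilbert problems.'' The two arguments are, of course, the same theorem viewed from opposite ends.
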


Another important ingredient in the proof of Theorem \ref{main thm}, mainly useful for establishing that the \textit{regularity} properties are preserved for every $\boldsymbol{t} \in D_\ep(\boldsymbol{t}^*)$, is the continuity of the critical graph of the associated quadratic differential which, in particular, has within itself the $q$-cut support of the equilibrium measure. Apart from the continuity of the support $J_{\boldsymbol{t}}$, knowing the continuity of the complementary part of the critical graph, i.e. $\mathscr{J}_{\boldsymbol{t}} \setminus J_{\boldsymbol{t}}$, is also very important. This is because the \textit{"closure of a strait"} (recall, for example, the passage from $\sigma=-1+4\ii$ to $\sigma_{\mbox{cr}} \simeq -1.15+4\ii$ depicted in Figures \ref{fig:stable barren -1 4} and \ref{fig:stable barren -1.15 4}) is directly tied to the behavior of the complementary part $\mathscr{J}_{\boldsymbol{t}} \setminus J_{\boldsymbol{t}}$ of the critical graph, which leads to the impossibility of having complementary contours $\Ga_{\boldsymbol{t}} \setminus J_{\boldsymbol{t}}$ to lie entirely in the $\boldsymbol{t}$-stable lands (see the orange dashed line in Figure \ref{fig:stable barren -1.15 4}). To that end, in \S \ref{Sec Openness}, for the entirety of the critical graph $\mathscr{J}_{\boldsymbol{t}}$ we prove:

\begin{theorem}\label{continuous deformations of one cut critical graph1}
	The critical graph $\mathscr{J}_{\boldsymbol{t}}$ of the quadratic differential $$Q(z;\boldsymbol{t})\dd z^2 \equiv \left(-\om(z;\boldsymbol{t})+\frac{V'(z;\boldsymbol{t})}{2}\right)^2\dd z^2, $$ and thus the support $J_{\boldsymbol{t}}$ of the equilibrium measure, deform continuously with respect to $\boldsymbol{t}$.
\end{theorem}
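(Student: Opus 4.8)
The plan is to transfer the real-analytic dependence of the $2q$ endpoints supplied by Theorem~\ref{solvability thm} to the coefficients of the polynomial $Q(\cdot;\boldsymbol{t})$, and then to feed this into the local structure theory of trajectories of quadratic differentials, concluding that the critical graph varies continuously in the Hausdorff metric on the Riemann sphere $\overline{\C}$. I work throughout in a neighborhood $D_\ep(\boldsymbol{t}^*)$ of a regular $q$-cut point $\boldsymbol{t}^*$, the setting relevant to Theorem~\ref{main thm}. Since on the support $-\om+\tfrac12 V'=\tfrac12 h\sqrt{R}$ with $R(z;\boldsymbol{t})=\prod_{j=1}^{q}(z-a_j(\boldsymbol{t}))(z-b_j(\boldsymbol{t}))$ and $h(\cdot;\boldsymbol{t})$ a polynomial of degree $2p-1-q$, one has $Q(z;\boldsymbol{t})=\tfrac14 h(z;\boldsymbol{t})^{2}R(z;\boldsymbol{t})$. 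By Theorem~\ref{solvability thm} the endpoints are real-analytic in $(\Re\boldsymbol{t},\Im\boldsymbol{t})$ and distinct on $D_\ep(\boldsymbol{t}^*)$, so the coefficients of $R(\cdot;\boldsymbol{t})$ are real-analytic; and $h(\cdot;\boldsymbol{t})$ is determined by the requirement that $\om(z;\boldsymbol{t})=\tfrac12 V'(z;\boldsymbol{t})-\tfrac12 h(z;\boldsymbol{t})\sqrt{R(z;\boldsymbol{t})}$ equal $z^{-1}+\bigO(z^{-2})$ at $\infty$, a linear system whose solvability is part of Theorem~\ref{solvability thm} and which, by Cramer's rule, yields real-analytic coefficients of $h$, hence of $Q(\cdot;\boldsymbol{t})$. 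Consequently the zeros of $Q(\cdot;\boldsymbol{t})$ move continuously and stay in a fixed disk $\{|z|\le\rho\}$, and — since regularity of $\boldsymbol{t}^*$ forces $h(a_j;\boldsymbol{t}^*),h(b_j;\boldsymbol{t}^*)\ne 0$ — for $\ep$ small the endpoints remain simple zeros of $Q(\cdot;\boldsymbol{t})$ while the pole of $Q(\cdot;\boldsymbol{t})\,\dd z^2$ at $\infty$ keeps its order $4p+2$.

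Next I would invoke the local theory of quadratic differentials. Off the zeros of $Q$, a trajectory is an integral curve of the analytic vector field $z\mapsto Q(z;\boldsymbol{t})^{-1/2}$, so on any region keeping a fixed positive distance from those zeros the trajectory arcs depend real-analytically on $\boldsymbol{t}$ by the standard parameter-dependence of solutions of analytic ODEs. Near a zero $z_0(\boldsymbol{t})$ of order $m$ one passes to the conformal coordinate $\zeta=\bigl(\int_{z_0}^{z}\sqrt{Q}\bigr)^{2/(m+2)}$, in which the $m+2$ emanating prongs become the rays $\arg\zeta\in\{2\pi k/(m+2)\}$; this coordinate change, and the local picture, depend continuously on $\boldsymbol{t}$ because $m$ is locally constant at the endpoints ($m=1$) and the pole order at $\infty$ is locally constant — the only caveat being that several zeros of $h$ could coalesce, which I would absorb by taking the corresponding local disks arbitrarily small so that their contribution to the graph is a shrinking ``star''. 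Patching these finitely many local pictures with the ODE continuity on the complementary compact region inside a large disk gives, near each maximal prong of $\mathscr{J}_{\boldsymbol{t}^*}$, a prong of $\mathscr{J}_{\boldsymbol{t}}$ uniformly close to it, provided the combinatorial (incidence) type of the graph is unchanged.

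The hard part is to exclude a change of combinatorial type, and I would treat separately the prongs escaping to $\infty$ and the short (finite) trajectories. For the former: in the variable $u=1/z$ the differential $Q\,\dd z^2$ has a pole of order $4p+2$, so a fixed punctured neighborhood $\{|z|>\rho'\}$ of $\infty$ breaks into $4p$ sectors in each of which every trajectory tends to $\infty$; hence a prong entering $\{|z|>\rho'\}$ is ``captured'', it reaches $\infty$, and by the previous step it enters this neighborhood continuously in $\boldsymbol{t}$, which precludes recombination with the finite part. For the short trajectories: the $q$ arcs forming $J_{\boldsymbol{t}}$ join consecutive endpoints, and the existence of such an arc is equivalent to the relevant period $\int_{a_j}^{b_j}\sqrt{Q}\,\dd s$ (and $\int_{b_j}^{a_{j+1}}\sqrt{Q}\,\dd s$) taking its prescribed value — precisely the endpoint/Euler--Lagrange relations behind \eqref{em711} that Theorem~\ref{solvability thm} solves for \emph{every} $\boldsymbol{t}\in D_\ep(\boldsymbol{t}^*)$. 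Thus each short trajectory persists, remains pinned at the continuously moving, distinct, simple zeros $a_j(\boldsymbol{t}),b_j(\boldsymbol{t})$, and cannot pass through a zero of $h$ (which, lying off $J_{\boldsymbol{t}^*}$ by regularity, stays off $J_{\boldsymbol{t}}$ for small $\ep$), so it deforms continuously. A remaining critical prong emanates from a zero of $h$ and either runs to $\infty$ (handled above) or forms a further short trajectory, again governed by a period condition stable under the real-analytic deformation of $Q$. Collecting all prongs gives $\mathscr{J}_{\boldsymbol{t}}\to\mathscr{J}_{\boldsymbol{t}^*}$ in the Hausdorff metric on $\overline{\C}$; running this with an arbitrary base point of $D_\ep(\boldsymbol{t}^*)$ yields the asserted continuous deformation, and $J_{\boldsymbol{t}}$ inherits it as the sub-collection of short trajectories.

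I expect the genuine obstacle to be the persistence of the short trajectories: such an arc is a saddle connection, which breaks under a \emph{generic} perturbation of a quadratic differential, so the continuity of $J_{\boldsymbol{t}}$ cannot be extracted from soft quadratic-differential theory alone. What rescues it is the non-generic circumstance that for every $\boldsymbol{t}$ the polynomial $Q(\cdot;\boldsymbol{t})$ arises from an actual equilibrium measure, so the period/endpoint relations hold identically — and this is exactly what Theorem~\ref{solvability thm} encapsulates. A lesser, bookkeeping difficulty is controlling the possible coalescence of zeros of $h(\cdot;\boldsymbol{t})$ (which alters the orders of the double zeros of $Q$, but only inside arbitrarily small disks, hence is harmless for Hausdorff-continuity); and extending the statement to non-regular $\boldsymbol{t}^*$ would additionally require handling the degeneracies (a)--(c), which once more change the combinatorial type of the graph only locally.
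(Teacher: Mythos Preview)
Your approach is correct but considerably more elaborate than the paper's. The paper's proof is a one-step application of the real-analytic implicit function theorem: having fixed a branch $\tilde{\eta}_q$ whose cut avoids the critical graph, it simply observes that at any non-endpoint $z^*\in\mathscr{J}^{(q)}_{\boldsymbol{t}^*}$ one has $\partial_z\tilde{\eta}_q(z^*;\boldsymbol{t}^*)=-h(z^*)\sqrt{R(z^*)}\neq 0$, hence by Cauchy--Riemann one of $\partial_x\Re\tilde{\eta}_q$ or $\partial_y\Re\tilde{\eta}_q$ is nonzero, and the IFT applied to the scalar equation $\Re\tilde{\eta}_q(x,y,\boldsymbol{t})=0$ yields a local real-analytic parameterization of the arc in the joint variables $(y,\boldsymbol{t})$ (or $(x,\boldsymbol{t})$). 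That, together with the continuity of the endpoints from Theorem~\ref{solvability thm}, is the entire argument; the paper does not pass through ODE theory, local conformal coordinates at zeros, or a Hausdorff-continuity framework, and it deliberately postpones the combinatorial questions (persistence of short trajectories, no new saddle connections, humps) to the proof of Theorem~\ref{main thm}, where they are handled via Teichm\"uller's lemma and the count of asymptotic directions at $\infty$.

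What your route buys is a self-contained proof of the stronger global statement (Hausdorff continuity on $\overline{\C}$ and preservation of incidence type), and you correctly identify that the persistence of saddle connections is precisely where the non-genericity enforced by the endpoint equations of Theorem~\ref{solvability thm} enters. The paper's route buys economy: it extracts local arc-continuity essentially for free from the IFT and defers the global/topological bookkeeping. One caveat in your sketch: the period relation $\Re\int_{a_j}^{b_j}\sqrt{Q}=0$ is necessary but not \emph{by itself} sufficient for the trajectory emanating from $a_j$ to terminate at $b_j$; you need to pair it with the ODE-continuity step (so that the perturbed trajectory is guaranteed to enter a fixed local chart around $b_j(\boldsymbol{t})$) to force it onto one of the three rays there --- this is implicit in your outline but should be made explicit.
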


\section{Equilibrium Measure and Quadratic Differentials}

\subsection{Equilibrium measure for orthogonal polynomials associated with real external fields.}\label{EqMeasRealCase} Let 
\[
V(x;\boldsymbol{t})=\frac{x^{2p}}{2p}+\sum_{j=1}^{2p-1} \frac{t_j z^j}{j},
\]
be any polynomial of even degree with real coefficients. Now consider the following energy functional which is defined on the space of probability measures on $\R$:
\begin{equation}\label{em1a}
I_{V}(\nu):= \underset{\R}{\int}\underset{\R}{\int} \log \frac{1}{|x-y|}\dd \nu(x) \dd \nu(y) + \int_{\R} V(x) \dd \nu(x),
\end{equation}
The equilibrium measure, $\nu_{\rm eq}$, is a {\it probability measure} on $\R$ which achieves the infimum of the above functional:
\begin{equation}\label{em2a}
\underset{\mathscr{M}_1(\R)}{\inf} I_{V}(\nu) = I_{V}(\nu_{\rm eq}),
\end{equation}
where 
\[ 
\mathscr{M}_1(\R):= \left\{ \nu: \nu \geq 0, \ \int_{\R} \dd \nu = 1  \right\}. 
\]
For this extremal problem, it is known that (see, e.g., \cite{DKM, Deiftetal2, BL})
\bge
\item The equilibrium measure {\it exists} and is {\it unique}.
\item The equilibrium measure is {\it absolutely continuous} with respect to the Lebesgue measure,
\[
\dd \nu_{\rm eq}(z)=\rho_V(z)\,\dd z.
\]
\item
The  support of $\nu_{\rm eq}$ consists of {\it finitely many closed intervals},
\[
J=\supp \nu_{\rm eq}=\bigcup_{k=1}^{q} [a_k,b_k],
\]
where $q\le p$.
The intervals $\{[a_k,b_k],\; k=1,\ldots,q\}$ of the support of $\nu_{\rm eq}$ are called the {\it cuts}. We may assume that $a_1<b_1<a_2<b_2<\ldots<a_q<b_q$.
\item The density of the equilibrium measure on the support $J$ can be written in the form,
\begin{equation}\label{em3a}
\rho_V(x)=\frac{1}{2\pi i}\,h(x)R_+^{1/2}(x),\quad R(x)=\prod_{k=1}^q(x-a_k)(x-b_k),
\end{equation}
where $h(x)$ is a polynomial, such that $h(x)\ge 0$ for all $x\in J$, 
and $R^{1/2}(x)$ is the branch on the complex plane of the square root of $R(x)$, with cuts on $J$, which is positive for large positive $x$. Respectively,
$R_+^{1/2}(x)$ is the value of $R^{1/2}(x)$ on the upper part of the cut.
\item Finally, the polynomial $h(x)$ is the {\it polynomial part} of the function $\displaystyle \frac{V'(x)}{R^{1/2}(x)}$ at infinity, i.e.,
\begin{equation}\label{em4a}
\frac{V'(x)} {R^{1/2}(x)} =h(x)+\mathcal O(x^{-1}).
\end{equation}
This determines $h(x)$ and hence the equilibrium measure $\nu_{\rm eq}$ uniquely, as long  as we know the end-points $a_1,b_1,\ldots,a_q,b_q$.
\ene

An important property of this minimization problem \eqref{em2a} is
that the minimizer $\nu_{\rm eq}$ is {\it uniquely determined} by the Euler--Lagrange variational conditions: 
\begin{equation}\label{em5a}
2\int_{\R}\log |x-y| \,\dd \nu_V(y) -V(x)=l,\quad \textrm{for}
\quad x\in J,
\end{equation}
\begin{equation}\label{em6a}
2\int_{\R}\log |x-y| \,\dd \nu_V(y) -V(x)\le l,\quad \textrm{for}
\quad x\in \R\setminus J,
\end{equation}
for some real constant {\it Lagrange multiplier} $l$, which is the 
{\it same for all cuts} $[a_k,b_k]$. From this we conclude that 
\begin{equation}\label{em7a}
\int_{b_k}^{a_{k+1}} h(x)R^{1/2}(x)\,\dd x=0, \quad
k=1,\ldots, q-1.
\end{equation}
Therefore the polynomial $h(x)$ has a zero on every interval $[b_k,a_{k+1}]$,
which means that $\deg h\ge q-1$.

We also consider the {\it resolvent} of the equilibrium measure defined as 
\begin{equation}\label{em8a}
\om(z)=\int_J\frac{\dd \nu_V(x)}{z-x}\,,\quad z\in \C\setminus J.
\end{equation}
This function, which is very useful to construct the density of the equilibrium measure, has the following analytical and asymptotic properties:
\bge
\item $\om(z)$ is analytic on the set $\C\setminus J$.
\item On $J$, the equilibrium condition \eqref{em5} implies that
\begin{equation}\label{em9a}
\om_+(x)+\om_-(x)=V'(x),
\end{equation}
and the Plemelj--Sokhotski formula implies
\begin{equation}\label{em10a}
\om_+(x)-\om_-(x)=-2\pi i\rho_V(x).
\end{equation}
Combining these equation with formula \eqref{em3} for $\rho_V(x)$, we obtain that
\begin{equation}\label{em11a}
\om(z)=\frac{V'(z)}{2}-\frac{h(z)R^{1/2}(z)}{2}.
\end{equation}
\item As $z\to\infty$,
\begin{equation}\label{em12a}
\om(z)=\frac{1}{z}+\frac{m_1}{z^2}+\ldots,\quad 
m_k=\int_Jx^k\rho_V(x)\,dx.
\end{equation}
\ene

\subsection{Equilibrium measure for orthogonal polynomials associated with complex external fields.}\label{EqMeasCplxCase}
In this section we follow the work of Kuijlaars and Silva \cite{KS} (See also \cite{Ber,MFRakh1,Rakh}). Let us consider the general complex external field of even degree 
\begin{equation}\label{em1}
V(z)=\frac{z^{2p}}{2p}+\sum_{j=1}^{2p-1} \frac{t_j z^j}{j}, \qquad t_j \in \C, \qquad j=1, \cdots, 2p-1.
\end{equation}

For $0<\ep< \pi/4p$, consider the sectors
\begin{equation}\label{em2}
\bga
S^+_{\ep}&=\left\{z\in\C\;\Big |\; |\arg z|\le \frac{\pi}{4p}-\ep\right\}\\
S^-_{\ep}&=\left\{z\in\C\;\Big |\; |\arg z-\pi|\le \frac{\pi}{4p}-\ep\right\}.
\ena
\end{equation}
Observe that in these sectors we particularly have,
\begin{equation}\label{em3}
\lim_{z\to\infty} \Re V(z)=\infty.
\end{equation} By a contour we mean a {\it continuous curve} $z=z(t)$, $-\infty<t<\infty$, without self-intersections, and we say that a contour $\Ga$ is \textit{admissible} if
\bge
\item The contour $\Ga$  is a finite union of  $C^1$ Jordan arcs.
\item There exists $\ep>0$ and $r_0>0$, such that $\Ga$ goes from $S^-_\ep$ to $S^+_\ep$ in the sense that $\forall\,r>r_0,$  
$\exists\, t_0<t_1$ such that 
\[
z(t)\in S^-_\ep \setminus D_r \quad \forall\, t<t_0;\quad 
z(t)\in S^+_\ep \setminus D_r \quad \forall\, t>t_1,
\]
where $D_r$ is the disk centered at the origin with radius $r$. We will assume that the contour $\Ga$ is oriented from $(-\infty)$ 
to $(+\infty)$, where $(-\infty)$ lies in the sector $S^-_\ep$ and
$(+\infty)$ in the sector $S^+_\ep$. The orientation defines an order
on the contour $\Ga$.
\ene
An example of an admissible contour is the real line. We denote the collection of all admissible contours by $\mathcal{T}$.

For $\Ga\in\mathcal T$, let $\mathcal P(\Ga)$ be the space of probability measures $\nu$ on $\Ga$, satisfying
\begin{equation}\label{em4}
\bga
\underset{\Ga}{\int} &  |\Re V(s)|\, \dd \nu(s)<\infty.
\ena
\end{equation}
Consider the following real-valued energy functional on $\mathcal P(\Ga)$:
\begin{equation}\label{em5}
I_{V,\Ga}(\nu):= \underset{\Ga\,\times\,\Ga}{\iint} \log \frac{1}{|z-s|}\,\dd \nu(z) \dd \nu(s) + \int_{\Ga} \Re V(s)\, \dd \nu(s).
\end{equation}
Then there exists a unique minimizer $\nu_{V,\Ga}$ of this functional (see \cite{SaffTotik}) so that
\begin{equation}\label{em6}
\min_{\nu\in \mathcal P(\Ga)} I_{V,\Ga}(\nu)= I_{V,\Ga}(\nu_{V,\Ga}).
\end{equation}

The minimizing probability measure $\nu_{V,\Ga}$ is referred to as the
{\it equilibrium measure} of the functional $I_{V,\Ga}(\nu)$, and
its support is a compact set $J_{V,\Ga}\subset \Ga$, and is uniquely determined by the {\it Euler--Lagrange variational conditions}. Namely,
$\nu_{V,\Ga}$ is the unique probability measure $\nu$ on $\Ga$ such that
there exists a constant $l$, the Lagrange multiplier, such that
\begin{equation}\label{em7}
\bga
& U^{\nu}(z)+\frac{1}{2}\,\Re V(z)=\ell,\quad z\in \supp \nu,\\
& U^{\nu}(z)+\frac{1}{2}\,\Re V(z)\ge \ell,\quad z\in \Ga\setminus \supp \nu,
\ena
\end{equation}
where 
\begin{equation}\label{em8}
U^{\nu}(z)=\int_{\Ga} \log \frac {1}{|z-s|}\,\dd \nu(s) 
\end{equation}
is the {\it logarithmic potential} of the measure $\nu$ \cite{SaffTotik}. 

Now we maximize the minimized energy functional $I_{V}(\nu_{V,\Ga})$ over all admissible contours $\Ga\in \mathcal T$. In \cite{KS}, the authors prove that the maximizing contour $\Ga_0\in\mathcal T$ exists, and
the equilibrium measure 
\[
\nu_{\rm eq} \equiv \nu_{V,\Ga_0}
\]
 is supported by a set $J\subset\Ga_0$ which is a finite union 
of {\it analytic arcs} $\Ga_0[a_k,b_k]\subset \Ga_0,$\footnote{Given two points $s_1,s_2\in \C \cup\{\pm \infty\}$ on $\Ga_{\boldsymbol{t}}$, by $\Ga_{\boldsymbol{t}}(s_1,s_2)$ and $\Ga_{\boldsymbol{t}}[s_1,s_2]$ we respectively denote the open and closed "intervals" on $\Ga_{\boldsymbol{t}}$ starting at $s_1$ and ending at $s_2$.}$\;k=1,\ldots,q$, 
\[
J=\bigcup_{k=1}^q \Ga_0[a_k,b_k],\quad a_1<b_1\le a_2<b_2\le \ldots\le a_q<b_q,
\] 
that are {\it critical trajectories}
of a quadratic differential\footnote{See  \S \ref{SecQD} for a review of definitions and basic facts about quadratic differentials.} $Q(z)\,\dd z^2$, where $Q(z)$ is a polynomial of degree 
\begin{equation}\label{em9}
\deg Q(z)=2\deg V(z)-2=4p-2.  
\end{equation}

Moreover, in \cite{KS} it is proven that the polynomial $Q(z)$ is equal to
\begin{equation}\label{em10}
Q(z)=\left(-\om(z)+\frac{V'(z)}{2}\right)^2, 
\end{equation}
where 
\begin{equation}\label{em11}
\om(z)= \int_{J} \frac{ \dd \nu_{\rm eq}(s)}{z-s}
\end{equation}
is the resolvent of the measure $\nu_{\rm eq}$. From
\[
\frac{1}{z-s}=\frac{1}{z}+\frac{s}{z^2}+\frac{s^2}{z^3}+\ldots,
\]
we obtain that $\om(z)=\mathcal{O}(z^{-1})$ as $z\to\infty$:
\begin{equation}\label{em12}
\om(z)=\frac{1}{z}+\frac{m_1}{z^2}+\ldots,\quad \mbox{with} \qquad m_k=\int_J s^k
\dd \nu_{\rm eq}(s).
\end{equation}

Additionally, the equilibrium measure $\nu_{\rm eq}$ is absolutely continuous 
with respect to the arc length. More precisely we have
\begin{equation}\label{em13}
\dd \nu_{\rm eq}(s)=\frac{1}{\pi \ii}\,Q_+(s)^{1/2} \dd s,
\end{equation}
where $Q_+(s)^{1/2}$ is the limiting value of the function
\begin{equation}\label{em14}
Q(z)^{1/2}=-\int_{J} \frac{ \dd \nu_{\rm eq}(s)}{z-s}+\frac{V'(z)}{2}, 
\end{equation}
as $z\to s\in J$ from the left-hand side of $J$ with respect to the orientation of the contour $\Ga_0$ from $(-\infty)$ to $\infty$. A very important resultin \cite{KS} is that
the equilibrium measure $\nu_{\rm eq}$ is {\it unique} as the \textit{max-min measure}. On the other hand, the infinite contour $\Ga_0$ is not unique because it can be deformed outside of the support $J$ of $\nu_{\rm eq}$, as long as $\Ga_0 \setminus J$ lies in the $\boldsymbol{t}$-stable lands.

\subsection{The $g$-function.}
 As usual we define the "$g$-function" as
\begin{equation}\label{em16}
g(z)= \int_{J} \log(z-s)\,\dd \nu_{\rm eq}(s),
\end{equation}
where for a fixed $s\in J$, we consider a cut of $\log(z-s)$ to be
$\Ga_0(-\infty,s]$. Notice that by \eqref{em11} we have
\begin{equation}\label{em17}
g'(z)= \int_{J}\frac{\dd \nu_{\rm eq}(s)}{z-s}=\om(z).
\end{equation}
Moreover, from \eqref{em8}, the logarithmic potential $U^{\nu_{\rm eq}}(z)$ can be written as
\begin{equation}\label{em18}
  U^{\nu_{\rm eq}}(z)=\int_{J} \log \frac {1}{|z-s|}\,\dd \nu_{\rm eq}(s) 
  =-\Re g(z),
\end{equation}
and therefore the Euler-Lagrange variational conditions \eqref{em7} can be expressed as
\begin{equation}\label{em19}
\bga
& -\Re g(z)+\frac{1}{2}\,\Re V(z)=\ell,\quad z\in J,\\
& -\Re g(z)+\frac{1}{2}\,\Re V(z)\ge \ell,\quad z\in \Ga_0\setminus J.
\ena
\end{equation}

\subsection{Regular and Singular Equilibrium Measures} An equilibrium measure $\nu_{\rm eq}$ is called {\it regular} if the following three conditions hold:
\bge
\item The arcs $\Ga_0[a_k,b_k],\;k=1,\ldots,q,$ of the support of $\nu_{\rm eq}$ are disjoint.
\item The end-points $\{a_k,b_k,\;k=1,\ldots,q\}$ are simple zeros of the polynomial $Q(s)$.
\item There is a contour $\Ga_0$ containing the support $J$ of $\nu_{\rm eq}$
such that 
\begin{equation}\label{em20}
U^{\nu}(z)+\frac{1}{2}\,\Re V(z)> \ell,\quad z\in \Ga_0\setminus J.
\end{equation}
\ene
An equilibrium measure $\nu_{\rm eq}$ is called {\it singular} (or
{\it critical}) if it is not regular.

\subsubsection{Regular Equilibrium Measures}  Assume that
the equilibrium measure $\nu_{\rm eq}$ is regular.
Because the resolvent 
\begin{equation}\label{em21}
\om(z)= \int_{J} \frac{ \dd \nu_{\rm eq}(s)}{z-s}
\end{equation}
is analytic on $\C\setminus J$,
one can see from equation \eqref{em10} that all the zeros of the
polynomial $Q(z)$ different from the end-points $\{a_k,b_k,\;k=1,\ldots,q\}$ must be of even degree, and thus
$Q(z)$ can be expressed as
\begin{equation}\label{em22}
Q(z)=\frac{1}{4}\,h(z)^2R(z),
\end{equation}
where $h(z)$ is some polynomial,
\begin{equation}\label{em23}
h(z)=\prod_{j=1}^r (z-z_j),
\end{equation}
having zeros $z_1,\ldots,z_r$ which are distinct from the $2q$ end-points $\{a_k,b_k\}^{q}_{k=1}$,
and
\begin{equation}\label{em24}
R(z)=\prod_{k=1}^q (z-a_k)(z-b_k).
\end{equation}
Therefore,
\begin{equation}\label{em25}
Q(z)=\frac{1}{4}\,h(z)^2R(z)=\frac{1}{4}\prod_{j=1}^r (z-z_j)^2
\prod_{k=1}^q (z-a_k)(z-b_k).
\end{equation}
In \eqref{em23}, and \eqref{em25} if $r=0$, it is understood that $h(z)\equiv 1$. By taking the square root with the plus sign, we obtain that
\begin{equation}\label{em26}
Q(z)^{1/2}=\frac{1}{2}\,h(z)R(z)^{1/2}
=\frac{1}{2}\prod_{j=1}^r (z-z_j)\left[\prod_{k=1}^q (z-a_k)(z-b_k)\right]^{1/2},
\end{equation}
Correspondingly, equation \eqref{em13} can be rewritten as
\begin{equation}\label{em27}
\bga
\dd \nu_{\rm eq}(z)&=\frac{1}{2\pi \ii}\,h(z)R_{+}(z)^{1/2} \dd z =\frac{1}{2\pi \ii}\,\prod_{j=1}^r (z-z_j)\left[\prod_{k=1}^q (z-a_k)(z-b_k)\right]_+^{1/2} \dd z.
\ena
\end{equation}
From \eqref{em10}, \eqref{em17}, and  \eqref{em26} we can write

\begin{equation}
g(z;\boldsymbol{t}) = \frac{V(z;\boldsymbol{t})+\ell^{(q)}_*(\boldsymbol{t})}{2}+\frac{\eta_q(z;\boldsymbol{t})}{2} , \qquad z \in \C \setminus \Ga_{\boldsymbol{t}}(-\infty,b_q(\boldsymbol{t})],
\end{equation}
where 
\begin{equation}\label{eta def}
\eta_q(z;\boldsymbol{t}):= -\int^{z}_{b_q(\boldsymbol{t})} \prod_{\ell=1}^r \left(s-z_\ell(\boldsymbol{t})\right)\left[\prod_{j=1}^q \left(s-a_j(\boldsymbol{t})\right)\left(s-b_j(\boldsymbol{t})\right)\right]^{1/2} \dd s, \qquad z \in \C \setminus \Ga_{\boldsymbol{t}}(-\infty,b_q(\boldsymbol{t})],
\end{equation}
in which the path of integration does not cross $\Ga_{\boldsymbol{t}}(-\infty,b_q(\boldsymbol{t})]$. Also from \eqref{em10} and \eqref{em26} we have \begin{equation}\label{g'++g'-}
g'_{+}(z;\boldsymbol{t})+ g'_{-}(z;\boldsymbol{t}) = V'(z;\boldsymbol{t}), \qquad  \ z \in J_{\boldsymbol{t}} = \bigcup_{j=1}^{q} \Ga_{\boldsymbol{t}} (a_{j}(\boldsymbol{t}),b_{j}(\boldsymbol{t})).
\end{equation}
We use \eqref{em17} and \eqref{em22} to rewrite \eqref{em10} as:
\begin{equation}\label{g' V' h sqrt R}
g'(z;\boldsymbol{t}) = \frac{1}{2} \left[ V'(z;\boldsymbol{t}) - h(z;\boldsymbol{t}) R^{1/2}(z;\boldsymbol{t})\right].
\end{equation}
\subsection{Quadratic Differentials}\label{SecQD}
In this subsection we briefly remind some definitions and basic facts about quadratic differentials from \cite{Strebel}. The zeros and poles of $Q(z)$ are referred to as the \textit{critical points} of the quadratic differential $Q(z)\dd z^2$, and all other points are called \textit{regular points} of $Q(z)\dd z^2$. For some fixed value $ \theta \in [0, 2\pi)$, the smooth curve $L_{\theta}$ along which \begin{equation}
\arg Q(z)\dd z^2 = \theta, 
\end{equation}
is defined as the $\theta$-arc of the quadratic differential $Q(z)\dd z^2$, and a maximal $\theta$-arc is called a $\theta$-trajectory.
The above equation implies that a $\theta$-arc can only contain regular points of $Q$, because at the critical points $\arg Q(z)$ is not defined. For a meromorphic quadratic differential, there is only one $\theta$-arc passing through each regular point. 

 We will refer to a $\pi$-trajectory ( resp. $0$-trajectory) which is incident with a critical point as a \textit{critical trajectory} (resp. \textit{critical orthogonal trajectory}). If $b$ is a critical point of $Q(z)\dd z^2$, then the totality of the solutions to 
\begin{equation} 
\Re\left(\int^z_b \sqrt{Q(s)} \dd s \right) = 0,
\end{equation}
is referred to as the \textit{critical graph} of $\int^z_b \sqrt{Q(s)} \dd s$ which is referred to as the \textit{natural parameter} of the quadratic differential $Q(z)\dd z^2$  (see \S 5 of \cite{Strebel}). A Jordan curve $\Sigma$ composed of open $\theta$-arcs and their endpoints, with respect to some meromorphic quadratic differential $Q(z)\dd z^2$, is a simple closed  \textit{geodesic polygon}  (also referred to as a $Q$-polygon). The endpoints may be regular or critical points of $Q(z)\dd z^2$, which form the vertices of the $Q$-polygon. $\Sigma$ is called a \textit{singular geodesic polygon}, if at least one of its end points is a singular point.

Now we can state the  \textit{Teichm\"uller's lemma}: for a meromorphic quadratic differential $Q(z)\dd z^2$, assume that $\Sigma$ is a $Q$-polygon, and let $\texttt{V}_{\Sigma}$ and $\texttt{Int} \Sigma$  respectively denote its set of vertices and interior. Then
\begin{equation}\label{Teich Lemma}
\#\texttt{V}_{\Sigma} - 2 = \sum_{z \in \texttt{V}_{\Sigma}} (\texttt{ord}(z)+2)\frac{\theta(z)}{2\pi} + \sum_{z \in \texttt{Int} \Sigma} \texttt{ord}(z),
\end{equation}
where $\theta(z)$ denotes the interior angle of $\Sigma$ at $z$, and $\texttt{ord}(z)$ is the order of the point $z$ with respect to the quadratic differential. That is, $\texttt{ord}(z)=0$ for a regular point, $\texttt{ord}(z)=n$  if $z$ is a zero of order $n \in \N$, and $\texttt{ord}(z)=-n$  if $z$ is a pole of order $n \in \N$ of the quadratic differential. We use the Teichm\"uller's lemma in the proof of Theorem \ref{main thm} in \S \ref{Sec Openness}.

\section{Endpoint Equations and the  Regular $q$-cut Regime}

Notice that from \eqref{g'++g'-} we have \begin{equation*}
     R_+^{-1/2}(z)g'_+(z)  = - R^{-1/2}_-(z) \left(V'(z)-g'_-(z)\right) =  R_-^{-1/2}(z)g'_-(z) -  R_-^{-1/2}(z)V'(z) 
\end{equation*}
Therefore by Plemelj-Sokhotskii we have 
\begin{equation}\label{g' series}
    g'(z) = \frac{R^{1/2}(z)}{2 \pi \ii } \int_{J} \frac{V'(s)}{R^{1/2}_{+}(s)} \frac{\dd s}{s-z} = - \frac{R^{1/2}(z)}{2 \pi \ii z} \sum_{\ell=0}^{\infty} \frac{T_{\ell}}{z^{\ell}}
\end{equation}
where
\begin{equation}\label{moments}
    T_{\ell} = \int_{J} \frac{V'(s)}{R^{1/2}_{+}(s)} s^{\ell} \dd s \ , \ \qquad \ \ell \in \N \cup \{0\}.
\end{equation}

From \eqref{g' series} and the requirement that $g'(z)=z^{-1}+O(z^{-2})$ as $z \to \infty$, we obtain the following $q+1$ equations:

\begin{equation}\label{moment conditions}
T_{\ell} = 0 \ , \ \qquad \ \ell = 0, 1, \ldots, q-1 \ , \qquad \mbox{and} \qquad T_{q} = -1 \ . \\
\end{equation}
We have $q-1$ gaps, and thus $q-1$ \textit{gap conditions}:

\begin{equation}\label{gap conditions}
\Re \int_{b_{j}}^{a_{j+1}} h(s) R^{1/2}(s) \dd s = 0 \ , \ \ j = 1, \ldots, q-1 \ .
\end{equation}
Since the equilibrium
measure is positive along the support, we immediately find the following $q-1$ real conditions 
\begin{equation}\label{cut conditions}
\Re \int_{a_{j}}^{b_{j}} h(s) R^{1/2}_{+}(s) \dd s = 0 \ , \ \ j = 1, \ldots, q-1 \ .
\end{equation}
Notice that the condition on the last cut $$\Re \int_{a_{q}}^{b_{q}} h(s) R^{1/2}_{+}(s) \dd s = 0 \ ,$$
is a consequence of the $q-1$ conditions in \eqref{cut conditions} and should not be considered as an extra requirement.

Being in the $q$-cut case, we have to determine $2q$ endpoints and thus $4q$ real unknowns $\Re a_1, \Im a_1$, $\Re a_2, \Im a_2, $ $ \cdots $ $\Re b_q, \Im b_q$. These unknowns are determined by the $4q$ real conditions given by \eqref{moment conditions}, \eqref{gap conditions}, and \eqref{cut conditions}.

Let $\mathscr{F}$ be the vector-valued function, whose $4q$ entries are defined as\begin{eqnarray}\label{4q eqns}
&&\mathscr{F}_{2\ell} = \Re T_{\ell}+  \delta_{\ell q}, \ \ \ \ \mathscr{F}_{2\ell+1} = \Im T_{\ell}, \qquad \ell = 0, \ldots,q \ , \\
&&
\mathscr{F}_{2q+1+j}=\Re \int_{a_{j}}^{b_{j}} h(s) R^{1/2}_{+}(s) \dd s, \qquad \ \ j = 1, \ldots, q-1 \ , \\
&&
\mathscr{F}_{3q+j} = \Re \int_{b_{j}}^{a_{j+1}} h(s) R^{1/2}(s) \dd s , \qquad \ \  j = 1, \ldots, q-1 \ .
\end{eqnarray}
We express the equations \eqref{moment conditions}, \eqref{gap conditions}, and \eqref{cut conditions} for determining the branch points as \begin{equation}\label{endpoint equations} \mathscr{F} = 0. \end{equation} 

From the requirement \eqref{em12}, and equation \eqref{g' V' h sqrt R}, in particular, we know that

\begin{equation}
	\deg V - 1= \deg h + \frac{\deg R}{2},
\end{equation}
therefore, recalling \eqref{em1},  \eqref{em23}, and \eqref{em24} we obtain
\begin{equation}\label{r p q}
	r= 2p - 1 - q.
\end{equation}
Since $h$ is a polynomial, we obtain the following bound on the number of cuts 
\begin{equation}
	q \leq 2p-1.
\end{equation}

\begin{definition}\label{Def q cut}
		The regular $q$-cut regime which is denoted by
		$\mathcal{O}_{q}$ is a subset in the phase space  $\C^{2p-1}$ which is defined as the collection of all $\boldsymbol{t}\equiv (t_1, \cdots, t_{2p-1}) \in \C^{2p-1}$ such that the points $a_j\left(\boldsymbol{t}\right) , b_j\left(\boldsymbol{t}\right)$, with $j=1,\cdots, q$ and, $\ell=1, \cdots, 2p-1-q$ as solutions of \eqref{endpoint equations} are all distinct and
		\begin{enumerate}
			\item The set $\mathscr{J}^{(q)}_{\boldsymbol{t}}$ of all points $z$ satisfying \begin{equation*}\label{level set 3}
			\Re \left[ \eta_q(z;\boldsymbol{t}) \right]=0,
			\end{equation*}
			contains a single Jordan arc connecting $a_j(\boldsymbol{t})$ to $b_j(\boldsymbol{t})$, for each $j=1,\cdots, q$.
			\item The points $z_\ell\left(\boldsymbol{t}\right)$ , $\ell=1, \cdots, 2p-1-q$, do not lie on $J^{(q)}_{\boldsymbol{t}} := \bigcup_{j=1}^{q} \Ga_{\boldsymbol{t}}\left[a_j\left(\boldsymbol{t}\right), b_{j}\left(\boldsymbol{t}\right)\right]$.
			\item There exists a complementary arc $\Ga_{\boldsymbol{t}}(b_q\left(\boldsymbol{t}\right), +\infty)$ which lies entirely in the component of the set \begin{equation*}
			\left\{ z : \Re \left[ \eta_q(z;\boldsymbol{t}) \right]<0 \right\},
			\end{equation*} which encompasses $(M_1\left(\boldsymbol{t}\right),+\infty)$ for some $M_1\left(\boldsymbol{t}\right)>0$.
						\item There exists a complementary arc $\Ga_{\boldsymbol{t}}(-\infty, a_1\left(\boldsymbol{t}\right))$ which lies entirely in the component of the set \begin{equation*}
			\left\{ z : \Re \left[ \eta_q(z;\boldsymbol{t}) \right]<0 \right\},
			\end{equation*} which encompasses $( - \infty , -M_2\left(\boldsymbol{t}\right))$ for some $M_2\left(\boldsymbol{t}\right)>0$.
			\item There exists a complementary arc $\Ga_{\boldsymbol{t}}\left(b_j\left(\boldsymbol{t}\right), a_{j+1}\left(\boldsymbol{t}\right)\right)$, for each $j=1,\cdots,q-1$ which lies entirely in the component of the set \begin{equation*}
			\left\{ z : \Re \left[ \eta_q(z;\boldsymbol{t}) \right]<0 \right\}.
			\end{equation*} 
		\end{enumerate}
\end{definition}

\subsection{Structure of the Critical Graph} In this subsection we show basic structural facts about the critical graph $\mathscr{J}^{(q)}_{\boldsymbol{t}}$  for a regular $q$-cut $\boldsymbol{t}$. Recalling \eqref{eta def} we notice that as $z \to \infty$ we have 

\begin{equation}
	\eta_q(z;\boldsymbol{t}) = - \frac{z^{r+q+1}}{r+q+1} \left(1+O(z^{-1})\right) = - \frac{z^{2p}}{2p} \left(1+O(z^{-1})\right),
\end{equation}
where we have used \eqref{r p q}. Therefore the components of $\mathscr{J}^{(q)}_{\boldsymbol{t}}$ near $\infty$ must approach the $4p$ distinct angles $\theta$ 

\begin{equation}\label{angles}
	\theta=\frac{\pi}{4p} + \frac{k \pi}{2p}, \qquad k = 0, 1, \cdots, 4p-1, 
\end{equation}
satisfying $\cos(2p\theta)=0,$ where we have parameterized $z$ in the polar form $ R e^{\ii \theta}$. Moreover, at each endpoint there are three critical trajectories of the quadratic differential $Q(z)\dd z^2$ making angles of $2\pi/3$ at the critical point. To see this, let $\al$ denote either $a_{j}$ or $b_{j}$, $j=1,\cdots,q$. We have 

\[ 	\eta_q(z) = - \int_{b_q}^{\al} h(s)\sqrt{R(s)} \dd s - \int_{\al}^{z} h(s)\sqrt{R(s)} \dd s. \]

Notice the first term on the right hand side is an imaginary number, which can be seen if we break it up into integrals over cuts and gaps and using the endpoint conditions \eqref{gap conditions} and \eqref{cut conditions}. The integrand of the second integral on the right hand side is $O\left((s-\al)^{1/2}\right)$, and thus \[ \Re \eta(z) = O\left((z-\al)^{3/2}\right), \qquad \mbox{as} \qquad z \to \al, \qquad \ell=1,\cdots,q. \]

This ensures that there are $3$ local trajectories emanating from $\al \in \{a_{j},{b_j}\}^{q}_{j=1}$ as solutions of $\Re \eta(z)=0$. Out of these $3 \times 2q$ local critical trajectories, $2q$ of them make the $q$ cuts, and thus we need to determine the destinations of the remaining  $4q$ local critical trajectories. Having solutions in the $4p$ directions given in \eqref{angles} near infinity guides us to investigate if all or some of the $4q$ local critical trajectories can terminate at infinity along one of the $4p$ angles in \eqref{angles}. We have three cases

\begin{enumerate}
	\item $p>q$. This means that the remaining $4q$ local trajectories are not enough to exhaust all $4p$ angles given in \eqref{angles} and thus  $\mathscr{J}^{(q)}_{\boldsymbol{t}}$ must also be constituted from $2(p-q)$ humps to correspond to the \textit{unoccupied} $4(p-q)$ directions at infinity.
	\item $p=q$. in this case $\mathscr{J}^{(q)}_{\boldsymbol{t}}$ does not have any humps, since the remaining $4q$ local trajectories are enough to exhaust all $4p$ angles given in \eqref{angles}.
	\item $p<q$. This means that there are not enough destinations for $4(q-p)$ of the remaining $4q$ local trajectories, and thus the only possibility is that we have $2(q-p)$ connections $[b_{k_m},a_{k_{m}+1}] \subset \mathscr{J}^{(q)}_{\boldsymbol{t}}$ for some index set $$\{k_{1}, \cdots k_{2(q-p)}\} \subseteq \{ 1, \cdots , q-1 \}$$
\end{enumerate}

\begin{remark}
\normalfont	It is clear that all three cases above are realizable for the quartic potential ($p=2$) considered in \cite{BGM}, when we can have $q=1,2,$ and $3$.
\end{remark}

\begin{remark}\label{humps in K} \normalfont
	If there are $m \leq r$ points $\{z_{k_1}, \cdots ,  z_{k_m}\} \subset \{ z_1, \cdots, z_r \}$ which belong to an unbounded geodesic polygon $K$ with the finite vertex at an endpoint (and the other "vertex" is at infinity), then the separation of the angles between the two edges at $\infty$ is $$\theta_{\infty}=\frac{(2m+1)\pi}{q+r+1} = \frac{(2m+1)\pi}{2p},$$
	and therefore $K$ hosts $m$ "humps" (branches of $\mathscr{J}^{(q)}_{\boldsymbol{t}}$ which start at $\infty$ at one of the angles in \eqref{angles} and also end at $\infty$ at a \textit{consecutive angle}, given again by \eqref{angles}, say at $3\pi/4p$ and $5\pi/4p$.) This is a consequence of the Teichm{\" u}ller's lemma applied to the polygon $K$.
\end{remark}

\section{Solvability of end point equations in a neighborhood of a regular $q-$cut point. Proof of Theorem \ref{solvability thm}}\label{SecSolvability}

In this section we want to prove that the equations uniquely determining the end-points are solvable in a neighborhood of a regular $q$-cut point. In this section we denote\[ \boldsymbol{t} \equiv \left(\Re t_1, \Im t_1, \cdots , \Re t_{2p-1} , \Im t_{2p-1}\right), \]
and 
\[ \boldsymbol{x} \equiv \left(\Re a_1, \Im a_1, \ldots, \Re a_{q}, \Im a_q, \Re b_1, \Im b_1, \ldots, \Re b_{q}, \Im b_q\right). \]
However when we refer to Definition \ref{Def q cut}, by $\boldsymbol{t}$ we denote the complex vector $ (t_1, \cdots, t_{2p-1}) \in \C^{2p-1}$.  We can think of $\mathscr{F}$ as a function of $4q$ real variables in the space $$X :=  \left\{ \left(\Re a_1, \Im a_1, \ldots, \Re a_{q}, \Im a_q, \Re b_1, \Im b_1, \ldots, \Re b_{q}, \Im b_q\right) \ : \ a_j,b_j \in \C, \ j=1,\ldots q
\right\} \ ,$$ and parameters in the space $$V :=  \left\{ \left(\Re t_1, \Im t_1, \cdots , \Re t_{2p-1} , \Im t_{2p-1}\right) \ : \ t_j \in \C, \ j=1,\cdots 2p-1 \right\} \ ,$$ recalling \eqref{em1}\footnote{Notice that the integrand $h(s)R^{1/2}(s)$ only depends on vectors in $X\times V$ due to \eqref{g' V' h sqrt R}, \eqref{g' series}, and \eqref{moments}.}. That is
\begin{eqnarray} 
 \mathscr{F} : X \times V \to \R^{4q} \ ,
\end{eqnarray}
or
\begin{eqnarray}
\mathscr{F} : \R^{4q+4p-2} \to \R^{4q} \ .
\end{eqnarray}

Notice that the objects $T_r$, $\int_{b_{m}}^{a_{m+1}} h(s) R^{1/2}(s) \dd s$, and $\int_{a_{i}}^{b_{i}} h(s) R^{1/2}_{+}(s) \dd s$ are complex-analytic with respect to $a_j, b_j,$ and $t_k$, for $0 \leq r \leq q $, $1 \leq m \leq q-1 $, $1 \leq i \leq q$, $1 \leq j \leq q $, and $1 \leq k \leq 2p-1 $.  Here we have used the fact that we know the explicit dependence of $h(s;\boldsymbol{t})$ on  $a_j, b_j,$ and $t_k$ which can be seen as follows: recall from \eqref{g' V' h sqrt R} that 
\begin{equation}
h(z;\boldsymbol{t}) = \frac{1}{R^{1/2}(z;\boldsymbol{t})} \left( V'(z;\boldsymbol{t})- 2 g'(z;\boldsymbol{t}) \right).
\end{equation}
Combining this with \eqref{g' series} we obtain
\begin{equation}\label{h in terms of V and R}
h(z;\boldsymbol{t}) =  -\frac{1}{ \pi \ii } \int_{J} \frac{V'(s;\boldsymbol{t})}{R^{1/2}_{+}(s;\boldsymbol{t})} \frac{\dd s}{s-z} + \frac{V'(z;\boldsymbol{t})}{ R^{1/2}(z;\boldsymbol{t})} ,
\end{equation}
and thus,
\begin{equation}\label{h h h}
h(z;\boldsymbol{t}) =	-\frac{1}{2 \pi \ii} \oint_{\ga^*} \frac{V'(s;\boldsymbol{t})}{R^{1/2}(s;\boldsymbol{t})} \frac{\dd s}{s-z} \ ,
\end{equation}
where $\ga^*$ is a negatively oriented contour which encircles both the support set $J$ and the point $z$.

This means that the functions $\mathscr{F}_{\ell}$, $1\leq \ell \leq 4q$ are all real-analytic functions of $\Re a_j, \Im a_j, \Re b_j, \Im b_j, \Re t_k, \Im t_k$ for  $1 \leq j \leq q $, and $1 \leq k \leq 2p-1 $. This allows us to use the\textit{ real-analytic implicit function theorem}\footnote{For the real-analytic version of the implicit function theorem see, e.g. Theorem 2.3.5 of \cite{KrantzParks}, and for the uniqueness of the map $\phi$, see e.g. Theorem 9.2 of \cite{MunkresAnalysisonManifolds}.}.

We show that if we are in the regular situation, then the Jacobian of the mapping $ \mathscr{F}$ with respect to the parameters in $X$ is nonzero. So, if for some $(\boldsymbol{x}^*,\boldsymbol{t}^*) \in X \times V$,  we have $\mathscr{F}(\boldsymbol{x}^*,\boldsymbol{t}^*) = 0$ and if 

\begin{equation}\label{Jac}
\left.\det \begin{pmatrix}
\frac{\partial \mathscr{F}_1}{\partial \Re a_1} & \frac{\partial \mathscr{F}_1}{\partial \Im a_1} & \cdots & \frac{\partial \mathscr{F}_{1}}{\partial \Re b_q} & \frac{\partial \mathscr{F}_1}{\partial \Im b_q} \\
\vdots & \vdots & \cdots & \vdots & \vdots \\
\frac{\partial \mathscr{F}_{4q}}{\partial \Re a_1} & \frac{\partial \mathscr{F}_{4q}}{\partial \Im a_1} & \cdots & \frac{\partial \mathscr{F}_{4q}}{\partial \Re b_q} & \frac{\partial \mathscr{F}_{4q}}{\partial \Im b_q} \\
\end{pmatrix} \right\vert_{(\boldsymbol{x}^*,\boldsymbol{t}^*)} \neq 0 \ ,
\end{equation}
then the real-analytic implicit function theorem ensures that there exists a neighborhood $\Om_1$ of $$\boldsymbol{t}^* \equiv \left(\Re t^*_1, \Im t^*_1, \cdots , \Re t^*_{2p-1} , \Im t^*_{2p-1}\right) \in \R^{4p-2}, $$ a neighborhood $\Om_2$ of $$\boldsymbol{x}^* \equiv  \left(\Re a^*_1, \Im a^*_1, \ldots, \Re a^*_{q}, \Im a^*_q, \Re b^*_1, \Im b^*_1, \ldots, \Re b^*_{q}, \Im b^*_q\right)\in \R^{4q},$$ and a unique real-analytic mapping $\varphi: \Om_1 \to \Om_2$ such that $\varphi(\boldsymbol{t}^*)=\boldsymbol{x}^*$, and $\mathscr{F}(\varphi(\boldsymbol{t}),\boldsymbol{t}) = 0$ for all $\boldsymbol{t} \in \Om_1$. 

Due to the continuity of $\varphi$, and the fact that $\boldsymbol{t}^*$ is a regular $q$-cut point  (so all $a_j(\boldsymbol{t}^*), b_j(\boldsymbol{t}^*)$ are distinct)  we can find a possibly smaller neighborhood $\Om_0 \subset \Om_1$ so that for each $\boldsymbol{t} \in \Om_0$ all end-points $a_j(\boldsymbol{t}), b_j(\boldsymbol{t})$, $j=1,\cdots,q$, are distinct.

So it only remains to prove that the Jacobian is nonzero at a regular $q$-cut point. We assume the Jacobian is zero at such a point, and aim for a contradiction.   Starting with this assumption, we know that there is $\boldsymbol{0} \neq \tilde{\boldsymbol{x}} \in X$ in the nullspace of the Jacobian matrix. Using $\boldsymbol{x}^*$ and $\tilde{\boldsymbol{x}}$ we define the following $1$-parameter family
\begin{equation}
	\boldsymbol{x}(\tau):= \boldsymbol{x}^* + \tau \tilde{\boldsymbol{x}} \ , \qquad \tau \in \R \ .
\end{equation}
We obviously have 
\begin{equation}\label{nonzero derivative}
\left. \frac{\dd}{\dd \tau} \boldsymbol{x}(\tau) \right|_{\tau=0} = \tilde{\boldsymbol{x}} \neq \boldsymbol{0} \ .
\end{equation}
For non-zero values of $\tau$, $\boldsymbol{x}(\tau)$ may not satisfy the end-point equations \eqref{endpoint equations}, but we can still think of the entries of $\boldsymbol{x}(\tau)$ as defining "end-points". More precisely, we define the points $a_j(\tau)$ and $b_j(\tau)$, as  $\Re a_j(\tau) = \boldsymbol{x}_{2j-1}(\tau)$, $\Im a_j(\tau) = \boldsymbol{x}_{2j}(\tau)$, $\Re b_j(\tau) = \boldsymbol{x}_{2q+2j-1}(\tau)$, $\Im b_j(\tau) = \boldsymbol{x}_{2q+2j}(\tau)$, $j=1,\cdots,q$. Now, using $a_j(\tau)$ and $b_j(\tau)$ as defined above, we define the $\tau$-dependent objects $R(z;\tau)$, $g'(z;\tau)$ and $T_\ell(\tau)$ using \eqref{em24}, \eqref{g' series} and \eqref{moments}. Now \eqref{g' V' h sqrt R} gives an expression for $h(z;\tau) \sqrt{R(z;\tau)}$.
 We emphasize that for non-zero $\tau$, these objects may not correspond to an equilibrium measure for some potential $V(\tau)$. 
	
	Below, we drop the dependence on $\tau$ in the notations to simplify our presentation. Notice that
\begin{equation}
	\label{eq:Fpartials1}
	\frac{\dd}{\dd \tau} T_{\ell}=  \sum_{j=1}^{q}  \left( \frac{\dd a_{j}(\tau)}{\dd \tau}  \frac{\partial}{\partial a_{j}} + \frac{\dd b_{j}(\tau)}{\dd \tau}  \frac{\partial}{\partial b_{j}} \right) T_{\ell} \ ,
\end{equation}

\begin{equation}\label{eq:Fpartials2}
	\frac{\dd}{\dd \tau} \Re \int_{a_{j}}^{b_{j}} h(s) R^{1/2}_{+}(s) \dd s = 
	\Re \int_{a_{j}}^{b_{j}} \sum_{j=1}^{q}  \left( \frac{\dd a_{j}(\tau)}{\dd \tau}  \frac{\partial}{\partial a_{j}} + \frac{\dd b_{j}(\tau)}{\dd \tau}  \frac{\partial}{\partial b_{j}} \right) \ \ h(s) R^{1/2}_{+}(s) \dd s \ ,
\end{equation}
and
\begin{equation}
	\label{eq:Fpartials3}
	\frac{\dd}{\dd \tau} \Re \int_{b_{j}}^{a_{j+1}} h(s) R^{1/2}(s) \dd s = \Re \int_{b_{j}}^{a_{j+1}}\sum_{j=1}^{q}  \left( \frac{\dd a_{j}(\tau)}{\dd \tau}  \frac{\partial}{\partial a_{j}} + \frac{\dd b_{j}(\tau)}{\dd \tau}  \frac{\partial}{\partial b_{j}} \right) \ \ h(s) R^{1/2}(s) \dd s \ .
\end{equation}
We let $\alpha$ represent an arbitrary branch point $a_{j}$ or $b_{j}$.  From \eqref{moments} we have the identity
\begin{eqnarray}
T_{\ell} - \alpha T_{\ell-1} = \int_{J} \frac{ V'(s)}{R^{1/2}_{+}(s)} s^{\ell-1} ( s - \alpha) \dd s \ , \qquad \ell \in \N \ .
\end{eqnarray}
Differentiating with respect to $\al$ yields
\begin{eqnarray}
\frac{\partial}{\partial \alpha} T_{\ell} - \alpha \frac{\partial}{\partial \alpha}T_{\ell-1} - T_{\ell-1} = \frac{-1}{2}\int_{J} \frac{ V'(s)}{R^{1/2}_{+}(s)} s^{\ell-1}  \dd s = \frac{-1}{2} T_{\ell-1} \  ,
\end{eqnarray}
which implies 
\begin{eqnarray}
\frac{\partial}{\partial \alpha} T_{\ell} - \alpha \frac{\partial}{\partial \alpha}T_{\ell-1}  =  \frac{1}{2} T_{\ell-1} \ .
\end{eqnarray}
In view of \eqref{moment conditions} for $1 \leq \ell \leq q$, when $\tau=0$ we actually have
\begin{eqnarray}
\frac{\partial}{\partial \alpha} T_{\ell} = \alpha \frac{\partial}{\partial \alpha}T_{\ell-1}  = \alpha^{\ell} \frac{\partial}{\partial \alpha} T_{0} \ .
\end{eqnarray}

\begin{lemma}
	We have
			\begin{eqnarray}\label{partial al g'}
		\frac{\partial}{\partial \alpha} g'(z) = \frac{- R^{1/2}(z)}{ 2 \pi i(z - \alpha)} \frac{\partial}{\partial \alpha} T_{0} \ .
		\end{eqnarray}
\end{lemma}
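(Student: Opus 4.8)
The plan is to rewrite $g'$ and $T_0$ as integrals over a loop $\ga$ that can be held \emph{fixed} while the branch point $\al$ is perturbed, so that $\frac{\partial}{\partial\al}$ acts only on the integrand and no boundary contribution from the moving endpoint enters. First I would fix $\al$ to be one of the $a_j$ or $b_j$ and factor $R(z)=(z-\al)\wt R(z)$, where $\wt R$ is a polynomial independent of $\al$ with $\wt R(\al)\neq 0$ (the endpoints being distinct). This gives at once
\[
\frac{\partial}{\partial\al}R^{1/2}(z)=-\frac{R^{1/2}(z)}{2(z-\al)},\qquad \frac{\partial}{\partial\al}\frac{1}{R^{1/2}(z)}=\frac{1}{2(z-\al)R^{1/2}(z)}.
\]

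Next, since $R^{1/2}_{+}$ and $R^{1/2}_{-}$ differ only by a sign along $J$, for every $f$ holomorphic in a neighbourhood of $J$ one has $\int_{J}\frac{V'(s)}{R^{1/2}_{+}(s)}f(s)\,\dd s=\tfrac12\oint_{\ga}\frac{V'(s)}{R^{1/2}(s)}f(s)\,\dd s$, where $\ga$ is a fixed, negatively oriented loop enclosing $J$ (as in \eqref{h h h}) but \emph{not} the point $z$; because an infinitesimal perturbation of the endpoints moves $J$ only slightly, $\ga$ can be chosen independent of $\al$. Taking $f\equiv 1$ and $f(s)=(s-z)^{-1}$ and recalling \eqref{g' series} and \eqref{moments} yields
\[
T_{0}=\frac12\oint_{\ga}\frac{V'(s)}{R^{1/2}(s)}\,\dd s,\qquad g'(z)=\frac{R^{1/2}(z)}{4\pi\ii}\oint_{\ga}\frac{V'(s)}{R^{1/2}(s)(s-z)}\,\dd s.
\]

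Then I would differentiate both identities in $\al$. Since $\ga$ is fixed, $\frac{\partial}{\partial\al}T_0=\tfrac14\oint_\ga \frac{V'(s)}{(s-\al)R^{1/2}(s)}\,\dd s$, i.e. $\oint_\ga \frac{V'(s)}{(s-\al)R^{1/2}(s)}\,\dd s=4\,\frac{\partial}{\partial\al}T_0$, while the product rule together with the formula for $\frac{\partial}{\partial\al}R^{1/2}(z)$ gives
\[
\frac{\partial}{\partial\al}g'(z)=-\frac{R^{1/2}(z)}{8\pi\ii(z-\al)}\oint_{\ga}\frac{V'(s)}{R^{1/2}(s)(s-z)}\,\dd s+\frac{R^{1/2}(z)}{8\pi\ii}\oint_{\ga}\frac{V'(s)}{(s-\al)R^{1/2}(s)(s-z)}\,\dd s.
\]
In the last integral I insert $\frac{1}{(s-\al)(s-z)}=\frac{1}{z-\al}\!\left(\frac{1}{s-z}-\frac{1}{s-\al}\right)$: the $\frac{1}{s-z}$-integral equals $\frac{4\pi\ii}{R^{1/2}(z)}g'(z)$, so its contribution cancels the first summand exactly, while the $\frac{1}{s-\al}$-integral equals $4\,\frac{\partial}{\partial\al}T_0$, and collecting constants leaves
\[
\frac{\partial}{\partial\al}g'(z)=\frac{-R^{1/2}(z)}{2\pi\ii(z-\al)}\,\frac{\partial}{\partial\al}T_{0},
\]
as claimed. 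The one delicate point is justifying that $\ga$ may be kept independent of $\al$ — equivalently, that differentiation passes under the integral sign with no boundary term from the moving endpoint $\al$; this is legitimate precisely because $\ga$ stays at a positive distance from $J$ under an infinitesimal deformation of the endpoints. Tracking the orientation convention consistently when collapsing $\oint_\ga$ onto $J$ is the other bookkeeping item; with these settled, the computation is the few lines above.
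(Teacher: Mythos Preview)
Your argument is correct. Both you and the paper must justify passing $\frac{\partial}{\partial\al}$ through an integral whose domain moves with $\al$, and you solve this in different ways. The paper stays on $J$ but first rewrites
\[
g'(z)=\frac{R^{1/2}(z)}{2\pi\ii(z-\al)}\left(-T_0+\int_J \frac{V'(s)(s-\al)}{R^{1/2}_+(s)}\,\frac{\dd s}{s-z}\right),
\]
so that the integrand carries an extra factor $(s-\al)$ and vanishes at the moving endpoint; this kills any boundary contribution and lets the derivative pass through. You instead unfold the integral over $J$ into a loop integral over a fixed $\ga$ at positive distance from $J$, where differentiation under the integral sign is automatic, and then use partial fractions to reassemble. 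Your route is a bit more mechanical and meshes naturally with the loop representations already used in \eqref{h h h} and \eqref{contour integral for h}; the paper's trick is a one-line algebraic device that avoids introducing the auxiliary contour. Either way the computation closes in a few lines.
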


\begin{proof}
	Let us rewite \eqref{g' series} as 
	
	\begin{equation}
	\begin{split}
		    g'(z) & = \frac{R^{1/2}(z)}{2 \pi i } \int_{J} \frac{V'(s)(s-\al)}{R^{1/2}_{+}(s)} \frac{\dd s}{(s-z)(s-\al)} \\ & =- \frac{R^{1/2}(z)}{2 \pi i (z-\al) } \int_{J} \frac{V'(s)}{R^{1/2}_{+}(s)} \dd s + \frac{R^{1/2}(z)}{2 \pi i (z-\al) } \int_{J} \frac{V'(s)(s-\al)}{R^{1/2}_{+}(s)} \frac{\dd s}{s-z} \\ & =  \frac{R^{1/2}(z)}{2 \pi i (z-\al) } \left( -T_0 +  \int_{J} \frac{V'(s)(s-\al)}{R^{1/2}_{+}(s)} \frac{\dd s}{s-z}\right) \ .
	\end{split}
	\end{equation}
	The advantage of this formula is that the differentiation with respect to $\al$ can be pushed through the integral, as the integrand  vanishes at $\al$. After taking the derivative with respect to $\al$ and straight-forward simplifications we obtain \eqref{partial al g'}
\end{proof}

Returning to (\ref{eq:Fpartials1})-(\ref{eq:Fpartials3}), when $\tau=0$ we have
\begin{eqnarray}\label{alg eqns}
\frac{\dd}{\dd \tau} T_{\ell} = 
\sum_{j=1}^{q}  \left( \frac{\dd a_{j}(\tau)}{\dd \tau}  \frac{\partial T_{0}}{\partial a_{j}} a_{j}^{\ell} + \frac{\dd b_{j}(\tau)}{\dd \tau}  \frac{\partial T_{0}}{\partial b_{j}}  b_{j}^{\ell} \right) \ , \qquad  \ell = 0 , \ldots q \ ,
\end{eqnarray}
\begin{eqnarray}
\label{eq:RealPart1}
\frac{\dd}{\dd \tau}\Re \int_{a_{j}}^{b_{j}} h(s) R^{1/2}_{+}(s) \dd s
= \Re \frac{1}{\pi \ii}\int_{a_{j}}^{b_{j}} \sum_{j=1}^{q}  \left( \frac{\dd a_{j}(\tau)}{\dd \tau}  \frac{\partial T_{0}}{\partial a_{j}} \frac{1}{s-a_{j}} + \frac{\dd b_{j}(\tau)}{\dd \tau}  \frac{\partial T_{0}}{\partial b_{j}}\frac{1}{s-b_{j}} \right) \left( R^{1/2}_{+}(s)\right) \dd s \ ,
\end{eqnarray}
\begin{eqnarray}
 \label{eq:RealPart2}
\frac{d}{d \tau}\Re \int_{b_{j}}^{a_{j+1}} h(s) R^{1/2}_{+}(s) \dd s
= \Re \frac{1}{\pi \ii}\int_{b_{j}}^{a_{j+1}} \sum_{j=1}^{q}  \left( \frac{\dd a_{j}(\tau)}{\dd \tau}  \frac{\partial T_{0}}{\partial a_{j}} \frac{1}{s-a_{j}} + \frac{\dd b_{j}(\tau)}{\dd \tau}  \frac{\partial T_{0}}{\partial b_{j}}\frac{1}{s-b_{j}} \right) \left(R^{1/2}(s)\right) \dd s \ ,
\end{eqnarray}
where in deriving the last two equations we have used \eqref{g' V' h sqrt R} and \eqref{partial al g'}. Consider the $2q$-vector 
\begin{eqnarray}\label{W}
 \boldsymbol{W} := \di \left . \left( \begin{array}{c}
 \frac{\partial T_{0}}{\partial a_{1}}
\frac{\dd a_{1}}{\dd \tau} \\
\vdots\\
 \frac{\partial T_{0}}{\partial a_{q}}
\frac{\dd a_{q}}{\dd \tau} \\
 \frac{\partial T_{0}}{\partial b_{1}}
\frac{\dd b_{1}}{\dd \tau} \\
\vdots\\
 \frac{\partial T_{0}}{\partial b_{q}}
\frac{\dd b_{q}}{\dd \tau} \\
\end{array}\right) \right|_{\tau=0}
\end{eqnarray}
that, in view of \eqref{alg eqns}, satisfies the $q+1$ equations 
\begin{eqnarray}
\label{eq:VdMConds}
\left( \begin{array}{cccccc}
    1 & \cdots & 1 & 1 & \cdots & 1 \\
 a_{1} & \cdots & a_{q} & b_{1} & \cdots & b_{q} \\   
 \vdots & \ddots & \vdots & \vdots & \ddots & \vdots \\
 a_{1}^{q} & \cdots & a_{q}^{q} & b_{1}^{q} & \cdots & b_{q}^{q} \\  
\end{array} \right) \boldsymbol{W} = \boldsymbol{0} \ ,
\end{eqnarray}
where $a_j, \ b_j$, $j=1,\cdots, q$ are all evaluated at $\tau=0$, in other words they are the actual endpoints corresponding to the solution $(\boldsymbol{x}^*,\boldsymbol{v}^*)$.
Furthermore, the integrand in (\ref{eq:RealPart1}) and (\ref{eq:RealPart2}) can be described as follows.  We first define
\begin{eqnarray}
 \boldsymbol{\rho}(s) := \left( \begin{array}{c}
  \frac{1}{s- a_{1}}    \\
\vdots \\
\frac{1}{s-a_{q}} \\
\frac{1}{s-b_{1}} \\
\vdots \\
\frac{1}{s-b_{q}}\\
\end{array} \right) \ ,
\end{eqnarray}
and then 
\begin{eqnarray}\label{BB}
B(s) := \frac{1}{\pi \ii} \left( \boldsymbol{W}^T  \boldsymbol{\rho}(s) \right) \ R^{1/2}(s)  \ ,
\end{eqnarray}
Where $(\cdot)^T$ denotes the transpose and all objects are evaluated at $\tau=0$. In other words, 
\begin{eqnarray}\label{B}
B(s) = \frac{1}{\pi \ii} \sum_{j=1}^{q}  \left( \frac{\dd a_{j}(\tau)}{\dd \tau}  \frac{\partial T_{0}}{\partial a_{j}} \frac{1}{s-a_{j}} + \frac{\dd b_{j}(\tau)}{\dd \tau}  \frac{\partial T_{0}}{\partial b_{j}}\frac{1}{s-b_{j}} \right) \ R^{1/2}(s) \ .
\end{eqnarray}
This function, in view of \eqref{eq:RealPart1} and \eqref{eq:RealPart2} satisfies
\begin{eqnarray}\label{Real Part Condition on Cuts}
\Re \int_{a_{j}}^{b_{j}} B_{+}(s) \dd s = 0 \ , \qquad j = 1, \ldots, q-1 \ , \\
\label{Real Part Condition on Gaps}\Re \int_{b_{j}}^{a_{j+1}} B(s) \dd s = 0 \ , \qquad j = 1, \ldots, q-1 \ .
\end{eqnarray}

Now, using \eqref{B} and expanding $(s-\al)^{-1}$ for large $s$ and switching the order of summations we obtain \begin{equation}
\frac{B(s)}{R^{1/2}(s)} = \frac{1}{\pi \ii s} \sum_{\ell=0}^{\infty} \frac{1}{s^{\ell}} \sum_{j=1}^{q}  \left( \frac{\dd a_{j}(\tau)}{\dd \tau}  \frac{\partial T_{0}}{\partial a_{j}} a_{j}^{\ell} + \frac{\dd b_{j}(\tau)}{\dd \tau}  \frac{\partial T_{0}}{\partial b_{j}}b_{j}^{\ell} \right). 
\end{equation}
So, because of (\ref{eq:VdMConds}), and recalling \eqref{em24} we observe that the behavior of $B(s)$ for $s$ large is given by
\begin{eqnarray}\label{B at inf}
B(s) = O\left( \frac{1}{s^{2}} \right) \ .
\end{eqnarray}
Therefore $B$ can be expressed as 
\begin{eqnarray}
B(s) = \frac{Q(s)}{R^{1/2}(s)}\ , 
\end{eqnarray}
where $Q$ is a polynomial of degree at most $q-2$.

Next, we show that $B$ is identically zero. To prove this, we show that the following integral is $0$.
\begin{eqnarray}
\label{eq:integral}
\iint_{\mathbb{C}} B(z) \overline{B(z)} \dd A \ .
\end{eqnarray}

\begin{lemma}
	Let $C$ be a positively oriented, piecewise smooth, simple closed curve in the plane, and let $D$ be the region bounded by $C$. Let $f(x+\ii y)\equiv u(x,y)+\ii v(x,y)$ be analytic in $D$. We have
	\begin{equation}
		\iint_{D} \frac{\partial}{\partial z} f(z) \ \dd A = \frac{\ii}{2} \oint_C f(z) \ovl{\dd z} \ ,
	\end{equation} 
	where Integration with respect to $\overline{\dd z}$ means:  parametrize the contour of integration via $z=z(t)$, then $$\int f(z) \ \overline{\dd z} = \int_{t_{0}}^{t_{1}} f(z(t)) \overline{z'(t)} \ \dd t \ .$$
\end{lemma}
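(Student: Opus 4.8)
The plan is to recognize this identity as the complex form of Green's (equivalently Stokes') theorem and to reduce it to the classical real Green's theorem by splitting $f$ into real and imaginary parts and expanding $\overline{\dd z}$ in Cartesian differentials.

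First I would set $z=x+\ii y$, so that $\overline{\dd z}=\dd x-\ii\,\dd y$, and with $f=u+\ii v$ write
\begin{equation*}
\oint_C f(z)\,\overline{\dd z}=\oint_C\big(u\,\dd x+v\,\dd y\big)+\ii\oint_C\big(v\,\dd x-u\,\dd y\big).
\end{equation*}
Applying the real Green's theorem $\oint_C(P\,\dd x+Q\,\dd y)=\iint_D(\partial_xQ-\partial_yP)\,\dd A$ to the two line integrals, with $(P,Q)=(u,v)$ and then $(P,Q)=(v,-u)$, I obtain
\begin{equation*}
\oint_C f(z)\,\overline{\dd z}=\iint_D\big[(\partial_xv-\partial_yu)-\ii(\partial_xu+\partial_yv)\big]\,\dd A.
\end{equation*}

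Next I would compare this with the Wirtinger derivative. Since $\partial_z=\tfrac12(\partial_x-\ii\partial_y)$, a direct computation gives $\partial_z f=\tfrac12\big[(\partial_xu+\partial_yv)+\ii(\partial_xv-\partial_yu)\big]$, and multiplying the previous display by $\ii/2$ reproduces $\iint_D\partial_z f\,\dd A$ term by term, which is exactly the asserted identity. In the intended application $\partial_z f=f'$ because $f$ is analytic, but the identity itself needs only that $u,v$ are $C^1$ up to $C$; for a merely piecewise-smooth $C$ the use of Green's theorem is justified in the standard way by decomposing $C$ into finitely many smooth arcs. An equivalent, more compact route is to apply Stokes' theorem to the $1$-form $\omega=f\,\dd\bar z$, using $\dd\omega=\partial_z f\,\dd z\wedge\dd\bar z$ together with $\dd z\wedge\dd\bar z=-2\ii\,\dd x\wedge\dd y$; this again yields $\oint_C f\,\overline{\dd z}=-2\ii\iint_D\partial_z f\,\dd A$, i.e.\ the claim.

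The argument is entirely routine; the only thing that needs genuine care is the bookkeeping of signs and of the orientation convention --- $C$ positively oriented with $D$ on its left --- when invoking Green's theorem, together with the precise meaning of $\int f\,\overline{\dd z}$ as $\int f(z(t))\,\overline{z'(t)}\,\dd t$ prescribed in the statement. I do not anticipate any real obstacle here.
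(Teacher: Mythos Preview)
Your proof is correct and follows essentially the same route as the paper: both split $f$ and $\overline{\dd z}$ into real and imaginary parts and reduce the identity to two applications of the real Green's theorem, with only cosmetic differences (the paper starts from the area integral and works toward the boundary integral, while you go the other way). Your added remark on the Stokes-theorem formulation via $\dd(f\,\dd\bar z)$ is a nice compact alternative but not needed here.
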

\begin{proof}
	We have
	\begin{equation}\label{16 1}
		\iint_{D} \frac{\partial}{\partial z} f(z) \ \dd A = \iint_{D} \frac{1}{2} \left(\frac{\partial}{\partial x} - \ii \frac{\partial}{\partial y} \right) \left(u(x,y) + \ii v(x,y)\right) \ \dd A = \frac{1}{2} \iint_{D}  \left( (u_x+v_y) + \ii (v_x-u_y) \right) \ \dd A \ .
	\end{equation}
	Now we apply the Green's theorem for the vector fields $\boldsymbol{F}_1(x,y)=(-v(x,y),u(x,y))$, and $\boldsymbol{F}_2(x,y)=(u(x,y),v(x,y))$. So
	
	\[ \iint_{D} \frac{\partial}{\partial z} f(z) \ \dd A = \frac{1}{2} \oint_C (-v(x,y),u(x,y))\cdot(x'(t),y'(t)) \ \dd t + \frac{\ii}{2} \oint_C (u(x,y),v(x,y))\cdot(x'(t),y'(t)) \ \dd t, \]
	
	where $(x(t),y(t))$ is the parametrization of the curve $C$. We therefore have
	
	\begin{equation}
		\begin{split}
		\iint_{D} \frac{\partial}{\partial z} f(z) \ \dd A & = \frac{1}{2} \oint_C \bigg( x'(t)[-v(x,y)+\ii u(x,y)] + y'(t)[u(x,y)+\ii v(x,y)] \bigg)  \ \dd t, \\ & = \frac{\ii}{2} \oint_C \bigg( u(x,y)+\ii v(x,y) \bigg) \left( x'(t) - \ii y'(t) \right)  \ \dd t, \\ & = \frac{\ii}{2} \oint_C f(z) \ovl{\dd z}.
		\end{split}
	\end{equation}
\end{proof}
Defining 
\begin{eqnarray}
u(z) = \int_{+\infty}^{z}B(s)\dd s \ ,
\end{eqnarray}
we have
\begin{eqnarray}
B(z) \overline{B(z)} = \frac{\partial}{\partial z} \left( u(z) \overline{B(z)} \right) \ , 
\end{eqnarray}
since $\frac{\partial}{\partial z} \overline{B(z)}=0$.
Now, in order to apply Stokes' theorem to the integral (\ref{eq:integral}), we have to apply it in two regions, one \textit{above} the contour of integration $\Gamma$, and one \textit{below} the contour of integration $\Gamma$.

Let $D_r$ be a disk of radius $r$ centered at the origin. The max-min contour $\Ga \in \mathcal{T}$ (see \S \ref{section 2.1}) divides $D_r$ into two parts: $D^{(+)}_r$ \textit{above} $\Ga$, and $D^{(-)}_r$  \textit{below} $\Ga$.  We can write
\begin{equation}
\begin{split}
	\iint_{\mathbb{C}} B(z) \overline{B(z)} \dd A & = \iint_{\mathbb{C}} \frac{\partial}{\partial z} \left( u(z) \overline{B(z)} \right)  \dd A = \lim_{r \to \infty} \iint_{D_r} \frac{\partial}{\partial z} \left( u(z) \overline{B(z)} \right) \dd A \\ & = \lim_{r \to \infty} \iint_{D^{(+)}_r} \frac{\partial}{\partial z} \left( u(z) \overline{B(z)} \right) \dd A + \lim_{r \to \infty} \iint_{D^{(-)}_r} \frac{\partial}{\partial z} \left( u(z) \overline{B(z)} \right) \dd A \\ & 	= \frac{\ii}{2} \lim_{r \to \infty} \oint_{\partial D^{(+)}_r} u(z) \overline{B(z)} \ \ovl{\dd z} + \frac{\ii}{2} \lim_{r \to \infty} \oint_{\partial D^{(-)}_r} u(z) \overline{B(z)} \ \ovl{\dd z} \ , 
\end{split}
\end{equation}
where both $\partial D^{(+)}_r$ and $\partial D^{(-)}_r$ are positively oriented. Therefore, due to \eqref{B at inf}, we find
\begin{eqnarray}\label{stokes}
- 2 i \iint_{\C} B(z) \overline{B(z)} \ \dd A = \int_{\Gamma} \left\{ \left[ u(z) \overline{B(z)}\right]_{+} -  \left[ u(z) \overline{B(z)}\right]_{-} \right\} \overline{\dd z} \ .
\end{eqnarray}
The contour $\Gamma$ is comprised of bands $\Ga[a_{j}, b_{j}]$, gaps $\Ga(b_{j}, a_{j+1})$, and the two semi-infinite contours ($\Ga(-\infty, a_{1})$ from $-\infty$ to $a_{1}$ and $\Ga(b_q, \infty)$ from $b_{q}$ to $+\infty$).  First observe that
\begin{eqnarray}
\int_{b_{q}}^{+\infty} \left\{ \left[ u(z) \overline{B(z)}\right]_{+} -  \left[ u(z) \overline{B(z)}\right]_{-} \right\} \overline{\dd z} = 0 \ ,
\end{eqnarray}
since by definition $u$ and $B$ are continuous across the contour from $b_{q}$ to $+\infty$.

Second, note that for $z$ in the contour from $-\infty$ to $a_{1}$, $B(z)$ is continuous across $\Gamma$, and so is $u(z)$, since 
\begin{eqnarray}
u_{+}(z) - u_{-}(z) = \sum_{j=1}^{q}\int_{a_{j}}^{b_{j}} \left(B_+(s)-B_-(s) \right) \dd s = \sum_{j=1}^{q} \oint_{\ga_{j}} B(s) \dd s = \oint_{\ga^*} B(s) \dd s = 0 \ ,
\end{eqnarray}
due to \eqref{B at inf}, where $\ga_{j}$ is a clockwise contour encircling the cut $\Ga[a_{j},b_{j}]$, and $\ga^*$ is a clockwise contour encircling all cuts. Therefore we also know that
\begin{eqnarray}
\int_{-\infty}^{a_{1}} \left\{ \left[ u(z) \overline{B(z)}\right]_{+} -  \left[ u(z) \overline{B(z)}\right]_{-} \right\} \overline{\dd z} = 0 \ .
\end{eqnarray}
So we must consider 
\begin{eqnarray}
\int_{a_{1}}^{b_{q}} \left\{ \left[ u(z) \overline{B(z)}\right]_{+} -  \left[ u(z) \overline{B(z)}\right]_{-} \right\} \overline{\dd z} \ .
\end{eqnarray}
There are two types of integrals:  those across bands and those across gaps.  

 For $z$ in a band, say  $\Ga(a_j,b_j)$, the quantity $B(z)$ has a jump discontinuity across the contour: $B_{+}(z)=-B_{-}(z)$. So we have
\begin{eqnarray}
\int_{a_{j}}^{b_{j}} \left\{ \left[ u(z) \overline{B(z)}\right]_{+} -  \left[ u(z) \overline{B(z)}\right]_{-} \right\}\overline{\dd z} = \int_{a_{j}}^{b_{j}} \overline{B_{+}(z)} \left( u_{+}(z) + u_{-}(z) 
\right) \overline{\dd z} \ , 
\end{eqnarray} 
and $u_{+} + u_{-}$ is the following constant:
\begin{eqnarray}
u_{+}(z) + u_{-}(z) = -2\sum_{k=j}^{q-1}\int_{b_{k}}^{a_{k+1}} B(s) \dd s - 2 \int_{b_{q}}^{+\infty} B(s) \dd s, \qquad  \mbox{ for all } z \in \Ga(a_j,b_j) \ ,
\end{eqnarray}
and so we find
\begin{eqnarray}
\int_{a_{j}}^{b_{j}} \left\{ \left[ u(z) \overline{B(z)}\right]_{+} -  \left[ u(z) \overline{B(z)}\right]_{-} \right\} \overline{\dd z} = 
\left( 
-2\sum_{k=j}^{q-1}\int_{b_{k}}^{a_{k+1}} B(s) \dd s - 2 \int_{b_{q}}^{+\infty} B(s) \dd s 
\right) \int_{a_{j}}^{b_{j}} \overline{B_+(z)} \ \overline{\dd z} \ .
\end{eqnarray} 
Note that 
\begin{eqnarray}
\int_{a_{j}}^{b_{j}} \overline{B_+(z)} \ \overline{\dd z} = \overline{ \int_{a_{j}}^{b_{j}} B_+(z) \dd z } = - \int_{a_{j}}^{b_{j}} B_+(z) \dd z \ ,
\end{eqnarray}
where the last equality follows from \eqref{Real Part Condition on Cuts}.  Therefore 
\begin{eqnarray}
\label{eq:bandint}
\int_{a_{j}}^{b_{j}} \left\{ \left[ u(z) \overline{B(z)}\right]_{+} -  \left[ u(z) \overline{B(z)}\right]_{-} \right\} \overline{\dd z} = 
\left( 
2\sum_{k=j}^{q-1}\int_{b_{k}}^{a_{k+1}} B(s) \dd s + 2 \int_{b_{q}}^{+\infty} B(s) \dd s 
\right) \int_{a_{j}}^{b_{j}} B_+(z) \dd z \ .
\end{eqnarray}

 For $z$ in a gap, say $\Ga(b_j,a_{j+1})$, the quantity $B(z)$ is continuous across the contour $\Gamma$, therefore
\begin{eqnarray}
\int_{b_{j}}^{a_{j+1}} \left\{ \left[ u(z) \overline{B(z)}\right]_{+} -  \left[ u(z) \overline{B(z)}\right]_{-} \right\} \overline{\dd z} = \int_{b_{j}}^{a_{j+1}} \overline{B(z)} \left(
u_{+}(z) - u_{-}(z)
\right) \overline{\dd z} \ ,
\end{eqnarray}
and $u_{+}(z) - u_{-}(z)$ is the constant:
\begin{eqnarray}
u_{+}(z) - u_{-}(z) = -2 \sum_{k=j+1}^{q} \int_{a_{k}}^{b_{k}} B_+(s) \dd s, \ \qquad \mbox{ for all } z \in \Gamma(b_{j},a_{j+1}) \ .
\end{eqnarray}
And so we find
\begin{eqnarray}
\int_{b_{j}}^{a_{j+1}} \left\{ \left[ u(z) \overline{B(z)}\right]_{+} -  \left[ u(z) \overline{B(z)}\right]_{-} \right\} \overline{\dd z} =
\left( -2 \sum_{k=j+1}^{q} \int_{a_{k}}^{b_{k}} B_+(s) \dd s \right) \int_{b_{j}}^{a_{j+1}} \overline{B(z)} \  \overline{\dd z} \ .
\end{eqnarray}
Note that from \eqref{Real Part Condition on Gaps} we have
\begin{eqnarray}
\int_{b_{j}}^{a_{j+1}} \overline{B(z)} \  \overline{\dd z} = - \int_{b_{j}}^{a_{j+1}} B(z) \dd z \ .
\end{eqnarray}
Therefore
\begin{eqnarray}
\label{eq:gapint}
\int_{b_{j}}^{a_{j+1}} \left\{ \left[ u(z) \overline{B(z)}\right]_{+} -  \left[ u(z) \overline{B(z)}\right]_{-} \right\} \overline{\dd z} =
\left( 2 \sum_{k=j+1}^{q} \int_{a_{k}}^{b_{k}} B_+(s) \dd s \right) \int_{b_{j}}^{a_{j+1}} B(z) \dd z \ .
\end{eqnarray}

 Using (\ref{eq:bandint}) and (\ref{eq:gapint}), we have
 \begin{equation}\label{eq:57}
 \begin{split}
 	& \int_{a_{1}}^{b_{q}}  \left\{ \left[ u(z) \overline{B(z)}\right]_{+} -  \left[ u(z) \overline{B(z)}\right]_{-} \right\} \overline{\dd z} = \\ & \sum_{j=1}^{q} \left( 
 2\sum_{k=j}^{q}\int_{b_{k}}^{a_{k+1}} B(s) \dd s 
 \right) \int_{a_{j}}^{b_{j}} B_+(z)\dd z + \sum_{j=1}^{q-1} \left( 2 \sum_{k=j+1}^{q} \int_{a_{k}}^{b_{k}} B_+(s) \dd s \right) \int_{b_{j}}^{a_{j+1}} B(z) \dd z \ .
 \end{split}
 \end{equation}
Note that in (\ref{eq:57}), $a_{q+1} = + \infty$. Reversing orders of summation in the first term on the r.h.s. of (\ref{eq:57}), we have
\begin{equation}\label{eq:60}
	\begin{split}
	& \int_{a_{1}}^{b_{q}}  \left\{ \left[ u(z) \overline{B(z)}\right]_{+} -  \left[ u(z) \overline{B(z)}\right]_{-} \right\} \overline{\dd z}= \\ & 
	2 \sum_{k=1}^{q}
	\int_{b_{k}}^{a_{k+1}} B(s) \dd s  \sum_{j=1}^{k}
	\int_{a_{j}}^{b_{j}} B_+(z) \dd z + \sum_{j=1}^{q-1} \left( 2 \sum_{k=j+1}^{q} \int_{a_{k}}^{b_{k}} B_+(s) \dd s \right) \int_{b_{j}}^{a_{j+1}} B(z) \dd z \ .
	\end{split}
\end{equation}
Exchanging indices of summation in the first term on the r.h.s. of (\ref{eq:60}), we find
\begin{equation}
	\begin{split}
	& \int_{a_{1}}^{b_{q}}  \left\{ \left[ u(z) \overline{B(z)}\right]_{+} -  \left[ u(z) \overline{B(z)}\right]_{-} \right\} \overline{\dd z}= \\ &
	2 \sum_{j=1}^{q}
	\int_{b_{j}}^{a_{j+1}} B(s) \dd s  \sum_{k=1}^{j}
	\int_{a_{k}}^{b_{k}} B(z)\dd z + \sum_{j=1}^{q-1} \left( 2 \sum_{k=j+1}^{q} \int_{a_{k}}^{b_{k}} B(s) \dd s \right) \int_{b_{j}}^{a_{j+1}} B(z) \dd z = \\ & 2 \sum_{j=1}^{q-1} \left[ 
	\int_{b_{j}}^{a_{j+1}} B(s) \dd s \left(  \sum_{k=1}^{q}
	\int_{a_{k}}^{b_{k}} B(z)\dd z \right) \right] +  \int_{b_{q}}^{+\infty}B(s)\dd s \left(
	\sum_{k=1}^{q}
	\int_{a_{k}}^{b_{k}} B(z)\dd z  \right) = 0 \ .
	\end{split}
\end{equation}

So we have proven that 
\begin{eqnarray}
\iint_{\mathbb{C}} B(z) \overline{B(z)} dA \ = \ 0 \ .
\end{eqnarray}
This of course implies that $B \equiv 0$, and hence by \eqref{BB} we conclude that
\begin{equation}\label{W=0}
	\boldsymbol{W} \equiv 0 \ .
\end{equation}


\begin{lemma}
	Let $\al \in \{a_j,b_j \ , \ j =1, \ \cdots \ q \}$, and $T_0$ given by \eqref{moments}. It holds that
\begin{eqnarray}
\frac{\partial T_{0}}{\partial \alpha} \neq 0 \ . 
\end{eqnarray}	
\end{lemma}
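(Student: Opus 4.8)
The plan is to convert the identity \eqref{partial al g'} of the preceding lemma, $\dfrac{\partial}{\partial\al}g'(z)=\dfrac{-R^{1/2}(z)}{2\pi\ii(z-\al)}\dfrac{\partial}{\partial\al}T_0$, into a closed expression for $\partial T_0/\partial\al$, and then to read off its nonvanishing from regularity. Recalling from \eqref{g' V' h sqrt R} that $g'(z)=\tfrac12\bigl(V'(z;\boldsymbol t)-h(z;\boldsymbol t)R^{1/2}(z;\boldsymbol t)\bigr)$ and that $V'$ does not depend on the endpoints, I would first rewrite
\begin{equation*}
\frac{\partial T_0}{\partial\al}
=\frac{-2\pi\ii\,(z-\al)}{R^{1/2}(z)}\,\frac{\partial}{\partial\al}g'(z)
=\frac{\pi\ii\,(z-\al)}{R^{1/2}(z)}\,\frac{\partial}{\partial\al}\bigl(h(z)R^{1/2}(z)\bigr),
\qquad z\in\C\setminus J.
\end{equation*}

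Next I would carry out the $\al$-differentiation. Since $R(z)=\prod_k (z-a_k)(z-b_k)$ one has $\partial_\al R(z)=-R(z)/(z-\al)$, hence $\partial_\al R^{1/2}(z)=-R^{1/2}(z)/\bigl(2(z-\al)\bigr)$; moreover $h(z;\boldsymbol t)$ is a polynomial in $z$ for every admissible endpoint configuration near $\boldsymbol t^*$ (transparent from \eqref{h h h}, differentiating under the contour integral with $\ga^*$ held fixed around $J$), so $\partial_\al h$ is again a polynomial in $z$. Consequently
\begin{equation*}
\frac{\partial}{\partial\al}\bigl(h(z)R^{1/2}(z)\bigr)
=R^{1/2}(z)\left[\frac{\partial h}{\partial\al}(z)-\frac{h(z)}{2(z-\al)}\right],
\end{equation*}
and substituting this into the previous display yields the identity
\begin{equation*}
\frac{\partial T_0}{\partial\al}=\pi\ii\,(z-\al)\,\frac{\partial h}{\partial\al}(z)-\frac{\pi\ii}{2}\,h(z),
\qquad z\in\C\setminus J.
\end{equation*}
The left-hand side is independent of $z$, while the right-hand side is a polynomial in $z$ agreeing with it on the dense set $\C\setminus J$ (which excludes all endpoints); hence the identity persists for all $z$, and evaluating at $z=\al$ gives the clean formula $\partial T_0/\partial\al=-\tfrac{\pi\ii}{2}\,h(\al)$.

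It then remains only to note that $h(\al)\neq0$, which is precisely a regularity hypothesis: since $\boldsymbol t^*$ is a regular $q$-cut point the endpoints $a_j,b_j$ are simple zeros of $Q=\tfrac14 h^2 R$, and as $R$ already has a simple zero at $\al$ this forces $h(\al)\neq0$ (equivalently, the zeros $z_1,\dots,z_r$ of $h$ lie off $J\supseteq\{a_j,b_j\}$, cf.\ condition~(2) of Definition~\ref{Def q cut}). Therefore $\partial T_0/\partial\al\neq0$. The two delicate points in making this rigorous are the branch bookkeeping for $R^{1/2}$ in a neighbourhood of the endpoint $\al$ — handled by observing that every step above takes place on $\C\setminus J$ — and the uniform-in-$\boldsymbol t$ polynomiality of $h$ and $\partial_\al h$, which I expect to be the one step requiring a careful (but routine) justification.
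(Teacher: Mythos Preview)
Your argument is correct and in fact distills the same identity the paper establishes implicitly: both approaches show $\partial T_0/\partial\alpha=-\tfrac{\pi\ii}{2}\,h(\alpha)$ and then invoke regularity to conclude $h(\alpha)\neq0$. The difference is in how this identity is reached. The paper works through contour integrals: it writes $T_0=\tfrac12\oint_{\ga^*}\frac{V'(s)}{R^{1/2}(s)}\,\dd s$, differentiates under the integral to get $\partial_\alpha T_0=\tfrac14\oint_{\ga^*}\frac{V'(s)}{R^{1/2}(s)}\frac{\dd s}{s-\alpha}$, and separately recognizes (via \eqref{contour integral for h}) that this very integral equals $-\pi\ii\,h(\alpha)/2$. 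You instead feed \eqref{g' V' h sqrt R} directly back into the preceding lemma \eqref{partial al g'} and use the product rule on $hR^{1/2}$, which yields the polynomial identity $\partial_\alpha T_0=\pi\ii(z-\alpha)\,\partial_\alpha h(z)-\tfrac{\pi\ii}{2}h(z)$ and then the closed form upon evaluation at $z=\alpha$. Your route is arguably more transparent and makes the dependence on the previous lemma explicit; the paper's route is self-contained and avoids having to argue that $\partial_\alpha h$ is again a polynomial (which, as you note, is routine from \eqref{h h h} but still a step). Both are equally rigorous once the branch issues for $R^{1/2}$ are handled on $\C\setminus J$, which you do correctly.
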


\begin{proof}

We have
\begin{eqnarray}
- 2 \pi i \psi(z) = -  h(z) R^{1/2}_{+}(z)= 
 g'(z) - \frac{V'(z)}{2} \ ,
 \end{eqnarray}
 and our assumptions imply that this quantity vanishes like a square root at each branchpoint $\alpha$.  So we know that 
 \begin{eqnarray}\label{h neq 0 at endpoints}
 \lim_{z \to \alpha} \frac{1}{R^{1/2}(z)} \left( g'(z) - \frac{V'(z)}{2} \right)  \neq 0 \ .
 \end{eqnarray}
 Using (\ref{g' series}) we can write
 \begin{eqnarray}\label{contour integral for h}
 \frac{1}{R^{1/2}(z)} \left( g'(z) - \frac{V'(z)}{2} \right) 
 = 
 \frac{1}{4 \pi \ii} \oint_{\ga^*} \frac{V'(s)}{R^{1/2}(s)} \frac{\dd s}{s-z} \ ,
\end{eqnarray}
where $\ga^*$ is a negatively oriented contour which encircles both the support set $J$ and the point $z$. Taking the limit as $z \to \alpha$, and recalling \eqref{h neq 0 at endpoints} we find 
\begin{eqnarray}
\label{eq:ineq}
\frac{1}{4 \pi \ii} \oint_{\ga^*}  \frac{V'(s)}{R^{1/2}(s)} \frac{\dd s}{s-\alpha} \neq 0 \ .
\end{eqnarray}
Recalling the definition \eqref{moments} we can write
\begin{eqnarray}
T_{0} = \int_{J} \frac{V'(s)}{R^{1/2}_+(s)} \dd s = \frac{1}{2} \oint_{\ga^*} \frac{V'(s)}{R^{1/2}(s)}\dd s \ .
\end{eqnarray}
Differentiating with respect to the branchpoint $\alpha$, we find
\begin{eqnarray}
\frac{\partial T_{0}}{\partial \alpha} = \frac{1}{4} \oint_{\ga^*} \frac{V'(s)}{R^{1/2}(s)} \frac{ \dd s}{s - \alpha} \ ,
\end{eqnarray}
which is nonzero because of (\ref{eq:ineq}).
\end{proof}

Recalling \eqref{W}, the above lemma together with \eqref{W=0} imply that 
\begin{equation}
\left. \frac{\dd a_j}{\dd \tau} \right|_{\tau=0} = \left. \frac{\dd b_j}{\dd \tau} \right|_{\tau=0} = 0 \ , \ \qquad \mbox{for all} \ \  j = 1 , 2, \ldots, q. 
\end{equation}

We have thus shown that 
\begin{eqnarray}
\left. \frac{\dd}{\dd \tau} \boldsymbol{x}(\tau) \right|_{\tau=0} = \boldsymbol{0} \ , 
\end{eqnarray}
which contradicts \eqref{nonzero derivative}. This proves that the Jacobian \eqref{Jac} is indeed non-zero. We have thus concluded the proof of Theorem \ref{solvability thm}.

\section{Openness of the regular $q$-cut regime}\label{Sec Openness}

\subsection{Proof of Theorem \ref{continuous deformations of one cut critical graph1}}

	The end-points $a_j(\boldsymbol{t}), b_j(\boldsymbol{t}) \in \mathscr{J}^{(q)}_{\boldsymbol{t}}$ deform continuously with respect to $\boldsymbol{t}$ as shown in Theorem \ref{solvability thm}, due to nonsingularity of the Jacobian matrix at a regular $q$-cut point. Notice that the critical graph of the quadratic differential $Q(z;\boldsymbol{t})\dd z^2$ is intrinsic to the polynomial $Q$ and does not depend on the particular branch chosen for its natural parameter $\eta_q(z;\boldsymbol{t})$, for example the one chosen in \eqref{eta def}. For the purposes of this proof, for each fixed $\boldsymbol{t}$, unlike our choice in \eqref{eta def}, we choose the branch $\tilde{\eta}_q(z;\boldsymbol{t})$ whose branch cut has no intersections with the critical graph $\mathscr{J}^{(q)}_{\boldsymbol{t}}$ and we can characterize the critical graph of $Q(z;\boldsymbol{t})\dd z^2$ as the totality of solutions to \begin{equation}\label{Main Eq}
\Re \tilde{\eta}_q(z;\boldsymbol{t})=0.
\end{equation} 
	Recalling \eqref{eta def} with the choice of branch discussed above, notice that
		\begin{equation}\label{detadz}
		\frac{\partial \tilde{\eta}_q}{\partial z}(z^*;\boldsymbol{t}^*) = -\prod_{\ell=1}^r \left(z^*-z_\ell(\boldsymbol{t}^*)\right)\left[\prod_{j=1}^q \left(z^*-a_j(\boldsymbol{t}^*)\right)\left(z^*-b_j(\boldsymbol{t}^*)\right)\right]^{1/2} \neq 0,
		\end{equation}
		where $z^* \in \mathscr{J}^{(q)}_{\boldsymbol{t}^*}\setminus\left\{ a_j(\boldsymbol{t}^*), b_j(\boldsymbol{t}^*) \right\}^{q}_{j=1}$ does not lie on the branch cut chosen to define $\tilde{\eta}_q$.	Since $z^*$ is not on the branch cut, there is a small neighborhood of $z^*$ in which $	 \tilde{\eta}_q(z,\boldsymbol{t}^*)$ is analytic. By Cauchy-Riemann equations, from \eqref{detadz} we conclude that at least one of the quantities $ 	\frac{\partial \Re \tilde{\eta}_q}{\partial x}(z^*;\boldsymbol{t}^*)  $ or $ 	\frac{\partial \Re \tilde{\eta}_q}{\partial y}(z^*;\boldsymbol{t}^*)  $ is non zero, $z=x+\ii y$.  Without loss of generality, let us assume that 
		\begin{equation}
			\frac{\partial \Re \tilde{\eta}_q}{\partial x}(z^*;\boldsymbol{t}^*) \neq 0.
		\end{equation}
		Now, think of the left hand side of \eqref{Main Eq} as a map
		\begin{equation}
			\Re \tilde{\eta}_q\left(x,\boldsymbol{w}\right) :  \R \times W \to \R,
		\end{equation}
		where $z=x+\ii y$, and an element $\boldsymbol{w} \in W \simeq \R^{4p-1}$ represents the variable $y$ and the real and imaginary parts of the parameters in the external field:
		\[ \boldsymbol{w} = \left( y , \Re t_1, \Im, t_1, \cdots, \Re t_{2p-1}, \Im t_{2p-1}  \right)^T.  \]
		Now, by the real-analytic Implicit Function Theorem \cite{KrantzParks}, we know that there exists a neighborhood $U$ of \[ \boldsymbol{w}^* \equiv \left( y^* , \Re t^*_1, \Im, t^*_1, \cdots, \Re t^*_{2p-1}, \Im t^*_{2p-1}  \right) \] and a  real-analytic map $\Psi : U \to \R$, with 
		\[ \Psi\left( \boldsymbol{w}^*  \right) = x^* \]
		and \[ 	\Re \tilde{\eta}_q(\Psi(\boldsymbol{w}), \boldsymbol{w}) = 0. \]
		That is to say that for any $y$ in a small enough neighborhood of $y^*$ and for any $ \boldsymbol{t} \equiv \left(\Re t_1, \Im, t_1, \cdots, \Re t_{2p-1}, \Im t_{2p-1}  \right)^T$  in a small enough neighborhood of $\boldsymbol{t}^*$, there is an $x \equiv x(y,\boldsymbol{t})$ such that $z=x+iy$ lies on the critical graph $\mathscr{J}^{(q)}_{\boldsymbol{t}}$. The real-analyticity of $\Psi$, in particular, ensures that $\mathscr{J}^{(q)}_{\boldsymbol{t}}$ deforms continuously with respect to $\boldsymbol{t}$. This finishes the proof of Theorem \ref{continuous deformations of one cut critical graph1}.

\subsection{Proof of Theorem \ref{main thm}} Let us start with the following two lemmas.

\begin{lemma}\label{z_l cont}
	The points $z_j(\boldsymbol{t})$ ,  $j=1,\cdots,r$,  depend continuously on $\boldsymbol{t}$.
\end{lemma}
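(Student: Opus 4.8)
The plan is to reduce the statement to the classical fact that the roots of a monic polynomial of fixed degree depend continuously on its coefficients; the only real work is then to check that the coefficients of the polynomial $h(z;\boldsymbol{t})$ vary continuously with $\boldsymbol{t}$ near a regular $q$-cut point $\boldsymbol{t}^*$. First I would recall that by Theorem \ref{solvability thm} the endpoints $a_j(\boldsymbol{t}),b_j(\boldsymbol{t})$ are real-analytic — in particular continuous — functions of $\boldsymbol{t}$ on a neighborhood of $\boldsymbol{t}^*$, so the elementary symmetric functions of $\{a_1(\boldsymbol{t}),\dots,b_q(\boldsymbol{t})\}$, hence the coefficients of $R(z;\boldsymbol{t})=\prod_{j=1}^q (z-a_j(\boldsymbol{t}))(z-b_j(\boldsymbol{t}))$, depend continuously on $\boldsymbol{t}$.

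Next I would pin down $h$ in terms of $R$ and $V$. Combining \eqref{g' V' h sqrt R} with $g'(z;\boldsymbol{t})=\om(z;\boldsymbol{t})=z^{-1}+O(z^{-2})$ (see \eqref{em12}) gives $h(z;\boldsymbol{t})R^{1/2}(z;\boldsymbol{t})=V'(z;\boldsymbol{t})+O(z^{-1})$ as $z\to\infty$, so that $h(z;\boldsymbol{t})$ is precisely the polynomial part at $\infty$ of $V'(z;\boldsymbol{t})/R^{1/2}(z;\boldsymbol{t})$, where $R^{1/2}$ is the branch with $R^{1/2}(z)\sim z^q$ as $z\to+\infty$. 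Writing $R^{1/2}(z;\boldsymbol{t})=z^q\big(1+c_1(\boldsymbol{t})z^{-1}+c_2(\boldsymbol{t})z^{-2}+\cdots\big)$, each $c_i(\boldsymbol{t})$ is a universal polynomial in the (continuous) elementary symmetric functions of the endpoints; since moreover the coefficients of $V'(z;\boldsymbol{t})$ are affine in $\boldsymbol{t}$, performing the division and collecting the nonnegative powers of $z$ exhibits each of the $r+1$ coefficients of $h(z;\boldsymbol{t})$ (a monic polynomial of the fixed degree $r=2p-1-q$, by \eqref{r p q}) as a polynomial expression in $\boldsymbol{t}$ and the $c_i(\boldsymbol{t})$, hence continuous in $\boldsymbol{t}$. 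Equivalently, one may argue directly from the contour formula \eqref{h h h}: using continuity of the endpoints one fixes a single contour $\ga^*$ encircling $J_{\boldsymbol{t}}$ (and the relevant $z$) uniformly for $\boldsymbol{t}$ near $\boldsymbol{t}^*$, and then continuity of the coefficients of $h$ follows by differentiating under the integral sign since the integrand depends continuously on $\boldsymbol{t}$.

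Finally I would invoke the standard continuity of the zeros of a polynomial with respect to its coefficients (a Rouch\'e argument): since $h(\cdot;\boldsymbol{t})$ is monic of fixed degree $r$ with coefficients continuous at $\boldsymbol{t}^*$, for every $\ep>0$ there is a neighborhood of $\boldsymbol{t}^*$ on which the zeros of $h(\cdot;\boldsymbol{t})$, counted with multiplicity, lie in the $\ep$-neighborhood of the zeros of $h(\cdot;\boldsymbol{t}^*)$; after an appropriate relabeling this says exactly that $z_j(\boldsymbol{t})\to z_j(\boldsymbol{t}^*)$. I do not expect a genuine obstacle here — the lemma is essentially routine — the single point deserving a word of care is the labeling of the $z_j(\boldsymbol{t})$ when several of them coincide, which one handles by phrasing continuity in terms of the Hausdorff distance between the zero multisets; this suffices for the use made of the lemma in the proof of Theorem \ref{main thm}, where one only needs that the $z_j(\boldsymbol{t})$ stay at positive distance from the (also continuously deforming, by Theorem \ref{continuous deformations of one cut critical graph1}) support $J^{(q)}_{\boldsymbol{t}}$.
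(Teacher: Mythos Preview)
Your proposal is correct and follows essentially the same approach as the paper: the paper's proof simply observes that the right-hand side of the contour integral formula \eqref{h h h} depends continuously on $\boldsymbol{t}$ (via the endpoints, which are continuous by Theorem \ref{solvability thm}), and then concludes that the zeros of $h(\cdot;\boldsymbol{t})$ depend continuously on $\boldsymbol{t}$. Your write-up is more thorough --- spelling out the polynomial-part characterization, the Rouch\'e argument, and the labeling caveat --- but the underlying idea is identical.
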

\begin{proof}
The right hand side of \eqref{h h h} clearly depends continuously on $\boldsymbol{t}$ (since the $\boldsymbol{t}$-dependence in $R$ is through the end-points which do depend continuously on $\boldsymbol{t}$). So the zeros of $h(z;\boldsymbol{t})$, being  $z_\ell(\boldsymbol{t})$ ,  $\ell=1,\cdots,r$,  depend continuously on $\boldsymbol{t}$.
	\end{proof}

\begin{lemma}\label{Lemma one or two vetex polygons are ruled out}
	There are no singular finite geodesic polygons with one or two vertices associated with the quadratic differential $Q(z)\dd z^2$ given by \eqref{em25}.
\end{lemma}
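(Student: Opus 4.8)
The plan is to obtain a contradiction straight from Teichm\"uller's lemma \eqref{Teich Lemma}. Suppose $\Sigma$ is a singular finite geodesic polygon associated with $Q(z)\dd z^2$, and set $n=\#\texttt{V}_\Sigma\in\{1,2\}$. Since $\Sigma$ is finite it is a Jordan curve contained in $\C$, and I would apply \eqref{Teich Lemma} with $\texttt{Int}\,\Sigma$ being the \emph{bounded} complementary component of $\Sigma$; this matters because, $Q$ being the \emph{polynomial} of \eqref{em25}, the quadratic differential $Q(z)\dd z^2$ has its only pole at $\infty$, which then lies outside $\texttt{Int}\,\Sigma$ and never appears in \eqref{Teich Lemma}. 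Hence every point of $\texttt{V}_\Sigma\cup\texttt{Int}\,\Sigma$ is a finite point, so $\texttt{ord}(z)\ge 0$ there, with $\texttt{ord}(z)\ge 1$ exactly at the zeros of $Q$ (the endpoints $a_j,b_j$ and the points $z_\ell$). Since also $\theta(z)\ge 0$ for every vertex, the entire right-hand side of \eqref{Teich Lemma} is $\ge 0$; as the left-hand side equals $\#\texttt{V}_\Sigma-2=-1<0$ when $n=1$, the one-vertex case is immediately impossible.

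For $n=2$ the left-hand side of \eqref{Teich Lemma} is $0$, so every summand on the right must vanish. Since $\Sigma$ is singular, one of its two vertices, call it $z_*$, is a critical point of $Q(z)\dd z^2$; as $Q$ has no finite poles $z_*$ is a zero, so $\texttt{ord}(z_*)+2\ge 3$, and the vanishing of the term attached to $z_*$ forces $\theta(z_*)=0$. The crux is to rule this out. A two-vertex polygon has exactly two edges, each a geodesic ($\theta$-)arc joining the two vertices; at the zero $z_*$, of order $m:=\texttt{ord}(z_*)$, exactly $m+2$ directions support an emanating $\theta$-arc, equally spaced (by $2\pi/(m+2)$ in the $z$-plane, equivalently by $\pi$ in the flat $|Q|$-metric). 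I would argue that the two edges of $\Sigma$ meeting at $z_*$ leave $z_*$ along two \emph{distinct} such directions: a $\theta$-arc is determined by a point together with an initial direction, so two edges leaving $z_*$ in the same direction would coincide up to the next critical point and hence be one and the same edge, contradicting that $\Sigma$ is a simple closed curve with two distinct edges. Therefore the interior region at $z_*$ contains at least one of the $m+2$ sectors, so $\theta(z_*)\ge\pi>0$, contradicting $\theta(z_*)=0$; the two-vertex case is impossible as well.

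The only step carrying genuine content is the last one, that the interior angle at a singular vertex of a geodesic polygon is strictly positive; it rests on the local normal form of a quadratic differential at a zero together with the simplicity of $\Sigma$, while everything else is sign bookkeeping in \eqref{Teich Lemma}. I also note that the argument nowhere uses the factorized shape of \eqref{em25} — only that $Q$ is a polynomial, hence has no finite poles — so it applies verbatim to any polynomial quadratic differential.
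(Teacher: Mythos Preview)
Your argument is correct and matches the paper's one-line proof: Teichm\"uller's lemma \eqref{Teich Lemma} together with $Q$ being a polynomial (so $\texttt{ord}(z)\ge 0$ at every finite point) makes the right-hand side nonnegative, which contradicts $n-2=-1$ when $n=1$ and, when $n=2$, forces $\theta(z_*)=0$ at the singular vertex, which your distinct-tangent-directions argument rules out. One small remark: the explicit bound $\theta(z_*)\ge\pi$ tacitly assumes both edges at $z_*$ are $\theta$-arcs for the \emph{same} $\theta$, which the definition of a geodesic polygon does not require; nonetheless your own observation that an arc is determined by its initial direction already yields $\theta(z_*)>0$, and that is all the contradiction needs.
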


\begin{proof}
	The proof follows immediately from the Teichm{\"u}ller's lemma and the fact that $Q$ is a polynomial.
\end{proof}

Now we prove Theorem \ref{main thm}. Let  $\boldsymbol{t}^*$  be a regular $q$-cut point. We show that there exists a small enough neighborhood of $\boldsymbol{t}^*$ in which all the requirements of Definition \ref{Def q cut} hold simultaneously.  We prove this in the following two mutually exclusive cases:

\begin{itemize}
	\item[(a)]  when none of the points $z_\ell(\boldsymbol{t}^*)$ lie on $\mathscr{J}^{(q)}_{\boldsymbol{t}^*} \setminus J^{(q)}_{\boldsymbol{t}^*}$, and
	\item[(b)] when one or more of the points $z_\ell(\boldsymbol{t}^*)$ lie on $\mathscr{J}^{(q)}_{\boldsymbol{t}^*} \setminus J^{(q)}_{\boldsymbol{t}^*}$.  
\end{itemize}

Let us first consider the case (a) above. So we are at a regular $q$-cut point $\boldsymbol{t}^*$ where we know that

\begin{equation}\label{Aj neq 0}
	A_{\ell}(\boldsymbol{t}^*)\neq 0, \qquad \ell=1,\cdots,r,
\end{equation}
where
\begin{equation}\label{Al}
A_{\ell}(\boldsymbol{t}):=\Re \eta_q\left(z_{\ell}(\boldsymbol{t});\boldsymbol{t}\right).
\end{equation}

For $\ep>0$, let $D_{\ep}(\boldsymbol{t}^*)$ denote the open set of all points $\boldsymbol{t}$ such that 
\[ |\Re t_k -\Re t^*_{k}| < \ep, \qandq |\Im t_k -\Im t^*_{k}| < \ep, \qquad \mbox{for} \qquad k=1,\cdots,2p-1.  \]

Since the functions $A_\ell(\boldsymbol{t})$ are continuous functions of $\boldsymbol{t}$, for each $\ell=1,\cdots,r$ there exists $\ep_\ell>0$ such that for all $\boldsymbol{t} \in D_{\ep_\ell}(\boldsymbol{t}^*)$ the inequalities $A_{\ell}(\boldsymbol{t})\neq 0$ hold for each $\ell=1,\cdots,r$. Let $\ep:= \underset{1 \leq \ell \leq r}{\min} \ep_\ell$. The claim is that for all $\boldsymbol{t} \in \Om =: \Om_0 \cap D_{\ep}(\boldsymbol{t}^*)$ (see the proof of Theorem \ref{solvability thm} to recall the open set $\Om_0$) all requirements of Definition \ref{Def q cut} hold.  It is obvious that the second requirement of Definition \ref{Def q cut} holds by the choice of $\ep$. Suppose that condition (3) of Definition \ref{Def q cut} does not hold for some $\tilde{\boldsymbol{t}} \in \Om$. Let $K(\boldsymbol{t}^*)$ denote the infinite geodesic polygon which hosts the complementary contour $\Ga_{\boldsymbol{t}}(b_q(\boldsymbol{t}),+\infty)$ as required by condition (3) of Definition \ref{Def q cut}. Due to Theorem \ref{continuous deformations of one cut critical graph1} this is only possible if 

\begin{itemize}
	\item[(a-i)] one or more points on the boundaries $\ell^{(b_q)}_2$ and $\ell^{(b_q)}_3$ of the infinite geodesic polygon $K(\boldsymbol{t}^*)$ continuously deform (as $\boldsymbol{t}^*$ deforms to $\tilde{\boldsymbol{t}}$) to coalesce together and block the access of a complementary contour from $b_q$ to $+\infty$, or
	\item[(a-ii)] if there are one or more humps in $K$ (see Remark \ref{humps in K}), one or more points on the boundaries $\ell^{(b_q)}_2$ or $\ell^{(b_q)}_3$ of the infinite geodesic polygon $K(\boldsymbol{t}^*)$ continuously deform (as $\boldsymbol{t}^*$ deforms to $\tilde{\boldsymbol{t}}$) to coalesce with the hump(s) and block the access of a complementary contour from $b_q$ to $+\infty$. This case necessitates $p>q$ which ensures the existence of humps as parts of the critical graph.
\end{itemize}

Notice that if there are \textit{no humps} in $K$, in particular when $p=q$ or $p<q$ which means there are no humps at all, then the only possibility to block the access from $b_q$ to $+\infty$ is what mentioned above in case (a-i).
We observe that the case (a-i) above is actually impossible by Lemma \ref{Lemma one or two vetex polygons are ruled out} as it necessitates a geodesic polygon with two vertices. 

So we just investigate the case (a-ii). Consider a point of coalescence $\tilde{z}$. Notice that $\tilde{z}$ can not be $b_q$ itself because for all $\boldsymbol{t} \in \Om_0$ there are only three emanating critical trajectories from $b_q$. At such a point we would have four emanating local trajectories from $\tilde{z}$ (or a higher even number of emanating local trajectories from $\tilde{z}$ if more than just two points come together at $\tilde{z}$) which is an indication that $\tilde{z}$ is a critical point of the quadratic differential. This is a contradiction, since $z_\ell(\tilde{\boldsymbol{t}})$, $\ell=1,\cdots,r$, do not lie on the critical trajectories by the choice of $\ep$ and hence $\tilde{z} \neq z_{\ell}(\tilde{\boldsymbol{t}})$. Moreover the quadratic differential $Q(z)\dd z^2$ given by \eqref{em25} does not have any critical points other than $a_j(\tilde{\boldsymbol{t}}), b_j(\tilde{\boldsymbol{t}})$ and $z_\ell(\tilde{\boldsymbol{t}})$, $j=1,\cdots,q$, $\ell=1,\cdots,r$. This finishes the proof that condition (3) of Definition \ref{Def q cut} holds for all $\boldsymbol{t} \in \Om$. Similar arguments show that the conditions (4) and (5) of Definition \ref{Def q cut} must also hold for all $\boldsymbol{t} \in \Om$. 

Now it only remains to consider the first requirement of Definition \ref{Def q cut}. Assume, for the sake of arriving at a contradiction that for some $\tilde{\boldsymbol{t}} \in \Om$ there is at least one index $j_1=1,\cdots,q$ for which the first requirement fails. Notice that there could not be more than one connection by Lemma \ref{Lemma one or two vetex polygons are ruled out}. So the only possibility to consider is that there is \textit{no connection} between $a_{j_1}(\tilde{\boldsymbol{t}})$ and $b_{j_1}(\tilde{\boldsymbol{t}})$. So the three local trajectories emanating from $a_{j_1}(\tilde{\boldsymbol{t}})$ and $b_{j_1}(\tilde{\boldsymbol{t}})$ must end up at $\infty$ and can not encounter $z_\ell(\tilde{\boldsymbol{t}})$, $\ell=0,\cdots,r$ by the choice of $\ep$. However this is impossible since there are at least $4(q-1)+6$ rays emanating from the end points and approaching infinity. There are already $4(p-q)$ rays ending up at $\infty$ from the $2(p-q)$ existing humps. This in total gives at least $4(q-1)+6+4(p-q) = 4p+2$ directions at $\infty$. Recall that we can have only $4p$ solutions at $\infty$. This means we have at least $2$ more solutions at $\infty$ than what is allowed. This means that at least $2$ rays emanating from the endpoints must connect to one or more humps. But this means we have at least two extra critical points other than $a_j(\tilde{\boldsymbol{t}}), b_j(\tilde{\boldsymbol{t}})$ and $z_\ell(\tilde{\boldsymbol{t}})$, $j=1,\cdots,q$, $\ell=1,\cdots,r$, which is a contradiction. This finishes the proof that the first requirement of Definition \ref{Def q cut} holds for all  $\boldsymbol{t} \in \Om$. Therefore, for case (a) we have shown that all the requirements of Definition \ref{Def q cut} hold simultaneously.

Notice that the proof of case (b) above (when $\boldsymbol{t}^*$ is a regular $q-$cut point and one or more of the points $z_\ell(\boldsymbol{t}^*)$ lie on  $\mathscr{J}^{(q)}_{\boldsymbol{t}^*} \setminus J^{(q)}_{\boldsymbol{t}^*}$) is very similar. To that end, let $1 \leq m \leq r-1$ be such that for all indices $$\{\ell_1, \cdots, \ell_m\} \subset \{1,\cdots,r\}$$ the points $z_{\ell_k}(\boldsymbol{t}^*)$, $1\leq k\leq m$, \textit{do not} lie on $\mathscr{J}^{(q)}_{\boldsymbol{t}^*} \setminus J^{(q)}_{\boldsymbol{t}^*}$. For these indices we define $\ep_{\ell_k}$ as above using the functions $A_{\ell_k}$ in \eqref{Al}. 

Now let us consider the rest of the indices
$$\{\ell_{m+1}, \cdots, \ell_r\} \subset \{1,\cdots,r\}$$
for which the points $z_{\ell_j}(\boldsymbol{t}^*)$, $m+1 \leq j \leq r$, \textit{do} lie on $\mathscr{J}^{(q)}_{\boldsymbol{t}^*} \setminus J^{(q)}_{\boldsymbol{t}^*}$. We claim that there is an $\ep_{\ell_j}>0$, for each  $m+1 \leq j \leq r$, such that for all $\boldsymbol{t} \in D_{\ep_{\ell_j}}(\boldsymbol{t}^*)$, the point $z_{\ell_j}(\boldsymbol{t})$ does not lie on $J^{(q)}_{\boldsymbol{t}}$. Indeed, since for each $\boldsymbol{t}$ the set $J^{(q)}_{\boldsymbol{t}}$ is compact, the distance function \[ \mathrm{d}_j(\boldsymbol{t}) := \mathrm{dist} \left( z_{\ell_j}(\boldsymbol{t}), J^{(q)}_{\boldsymbol{t}} \right) \equiv \underset{z\in J^{(q)}_{\boldsymbol{t}}}{min} \{ |z_{\ell_j}(\boldsymbol{t}) - z| \} \]
is well-defined and is a continuous function of $\boldsymbol{t}$ due to Theorem \ref{continuous deformations of one cut critical graph1} and Lemma \ref{z_l cont}. For the $\boldsymbol{t}^*$ under consideration we know that $\mathrm{d}_j(\boldsymbol{t}^*)>0$, and by the continuity of $\mathrm{d}_j$, there is an $\ep_{\ell_j}$ such that for all $\boldsymbol{t}$ in an $\ep_{\ell_j}$-neighborhood of $\boldsymbol{t}^*$ we have $\mathrm{d}_j(\boldsymbol{t})>0$.

Again let $\ep:= \underset{1 \leq \ell \leq r}{\min} \ep_\ell$. The claim then is that for all $\boldsymbol{t} \in \Om$  all requirements of Definition \ref{Def q cut} hold.  It is obvious that the second requirement of Definition \ref{Def q cut} holds by the choice of $\ep$, and if any other requirement of Definition \ref{Def q cut} does not hold  for some $\boldsymbol{t} \in \Om$ , analogous reasoning as provided in case (a) above shows that one gets a contradiction.

\section{Conclusion}

In this article we have provided a simple and yet self-contained proof of the openness of the regular $q$-cut regime when the external field is a complex polynomial of even degree. We have also proven that the solvability of the $q$-cut end-point equations persists in a small enough neighborhood of a regular $q$-cut point in the parameter space. In addition, we have shown that the real and imaginary parts of the endpoints are real analytic with respect to the real and imaginary parts of the parameters of the external field, and that the critical graph of the underlying quadratic differential depends continuously on $\boldsymbol{t}$.

As discussed in the introduction, we could have considered other classes of admissible contours different from the one associated with the real axis, for the even degree polynomial \eqref{int1}. Yet, multiple other cases would have arised if one started with an \textit{odd-degree} polynomial external field, then considered its classes of admissible sectors and contours, and finally solved the max-min variational problem for the collection of contours from that class\footnote{Notice that in the case where the degree of the polynomial external field is odd, the full real axis can not lie in any admissible sector, as the condition \eqref{growth condition} is not astisfied as $z$ approaches to $\infty$ along the negative real axis. See e.g. Figure 1 of \cite{BDY} for the cubic external field, and Figure 2 of \cite{KS} for a quintic one.}.   However, to that end, even though we have made the simplifying assumption on fixing the degree of external field to be even, and our fixed choice of admissible contours, we would like to emphasize that our arguments presented in this paper still work in the other cases as long as one considers a single curve going to infinity inside any two admissible sectors.

\section*{Acknowledgements}
This material is based upon work supported by the National Science Foundation
under Grant No. DMS-1928930. We gratefully acknowledge the Mathematical Sciences Research Institute, Berkeley, California and the organizers of the semester-long program \textit{Universality and Integrability in Random Matrix Theory and Interacting Particle Systems} for their support in the Fall of 2021, during which P.B., R.G., and K.M. worked on this project. The work of M.B. was supported in part by the Natural Sciences and Engineering Research Council of Canada (NSERC) grant RGPIN-2016-06660. K.M. acknowledges support from the NSF grant DMS-1733967. A.T. acknowledges support from the NSF grant DMS-2009647.

\end{document}